\newcommand{\rank}{\operatorname{rank}}
\renewcommand*{\top}{%
  {\mathpalette\@transpose{}}%
}
\newcommand*{\@transpose}[2]{%
  \raisebox{\depth}{$\m@th#1\mathsf{T}$}%
}
\newtheorem{cor}{Corollary}[section]
\let\c@prop\c@theorem
\let\c@cor\c@theorem
\let\c@lemma\c@theorem
\let\c@definition\c@theorem
\let\c@example\c@theorem
\let\c@remark\c@theorem
\let\c@obs\c@theorem
\let\c@claim\c@theorem
\newcommand{\R}{\mathbb{R}}
\newcommand{\Z}{\mathbb{Z}}
\newcommand{\bA}{\mathbf{A}}
\newcommand{\bB}{\mathbf{B}}
\newcommand{\bQ}{\mathbf{Q}}
\newcommand{\bk}{\mathbf{k}}
\newcommand{\ba}{\mathbf{a}}
\newcommand{\bb}{\mathbf{b}}
\newcommand{\bc}{\mathbf{c}}
\newcommand{\be}{\mathbf{e}}
\newcommand{\bv}{\mathbf{v}}
\newcommand{\bt}{\mathbf{t}}
\newcommand{\bw}{\mathbf{w}}
\newcommand{\bx}{\mathbf{x}}
\newcommand{\by}{\mathbf{y}}
\newcommand{\bz}{\mathbf{z}}
\DeclareMathOperator{\vol}{vol}
\DeclareMathOperator{\cl}{cl}
\DeclareMathOperator{\conv}{conv}
\begin{document}

\title{Gaining or Losing Perspective for Convex Multivariate Functions  on a Simplex\thanks{
This work was supported in part by ONR grant N00014-21-1-2135. 
}
}

\titlerunning{Gaining or Losing Perspective for Convex Multivariate Functions on a Simplex}        

\author{Luze Xu         \and
        Jon Lee 
}


\institute{L. Xu, J. Lee: \at
              University of Michigan,
              \email{$\{$xuluze,jonxlee$\}$@umich.edu}           
}

\date{\today}

\maketitle

\begin{abstract}
MINLO (mixed-integer nonlinear optimization) formulations of the disjunction between the origin and a polytope via a binary indicator variable have broad applicability  in nonlinear combinatorial optimization, for modeling a fixed cost $c$ associated with 
carrying out a set of $d$ activities and a convex variable cost function $f$ associated with the levels of the activities. 
The perspective relaxation
is often used to solve such models to optimality in a branch-and-bound context, especially in the 
context in which $f$ is univariate (e.g., in Markowitz-style portfolio optimization). 
But such a relaxation typically requires conic solvers
and are typically not compatible 
with general-purpose NLP software which can accommodate additional classes of constraints.
This motivates the study of weaker relaxations to investigate when simpler relaxations may be adequate.
Comparing the volume (i.e., Lebesgue measure)
of the relaxations as means of comparing them, we lift some of the results related to univariate functions $f$ to the multivariate case. Along the way, we survey, connect and extend 
relevant results on integration over a simplex, some of which we concretely employ, and others of which can be used for further exploration on our main subject.

\keywords{mixed-integer nonlinear optimization  \and global optimization \and convex relaxation \and  perspective \and simplex \and polytope \and volume \and integration}
\end{abstract}

\section{Introduction}
The ``perspective reformulation'' technique is used to obtain strong relaxations of the MINLO (mixed-integer nonlinear optimization) formulations modeling indicator variables: when an indicator variable is ``off'', a vector of $d$ decision variables is forced to some specific point (often $\mathbf{0}\in\mathbb{R}^d$), and when it is ``on'', the vector of decision variables must belong to a specific convex set in $\mathbb{R}^d$ (see \cite{gunlind1,lee_gaining_2020} and the many references therein). 

Perspective relaxations typically contain conic constraints, but not all NLP solvers are equipped to handle conic constraints correctly. Conic solvers (like \verb;MOSEK; and \verb;SDPT3;; see \cite{mosek} and \cite{SDPT3}, respectively) handle
such constraints coming from well-known classes of cones (e.g., second-order cones, power cones, exponential cones), by providing associated barrier functions. But they do not have the capability to handle 
all such constraints. Even in cases where a conic solver can handle the perspectivization of
a given convex function, there may be other (even convex) constraints that such a solver cannot handle.
In such a situation, we may hope to use a general NLP solver, which we might also expect to be faster than a
conic solver, but these are not typically able to handle
perspective functions correctly (see \cite[ Sec. 1.2]{lee_gaining_2020}). 

For the univariate case of a continuous variable $x$ being either $0$ or in a positive interval $[\ell,u]$,
\cite{lee_gaining_2020,lee2020piecewise} studied the trade-off between the tightness and tractability of alternative relaxations, and proposed several natural and simpler non-conic-programming relaxations.
For the specific case of $f(x):=x^p$, $p>1$, they obtained concrete results, considering the 
relative tightness of formulations as  functions of $\ell$, $u$, and $p$. 
These results apply to the  situation where indicator variables manage terms in a \emph{separable} objective function, with each continuous variable being either $0$ or in an interval (not containing $0$).
%

In what follows, we consider the multivariate case in 
which the decision variable (vector) $\bx$
is either $\mathbf{0}\in\mathbb{R}^d$ or in a simplex $J\subset \mathbb{R}_{\geq 0}^d$
(not containing $\mathbf{0}$).
Our goal is to lift results related to univariate functions from  
\cite{lee_gaining_2020,lee2020piecewise} to the multivariate case.
The idea of comparing relaxations via their volumes (i.e., Lebesgue measure)
was introduced in \cite{LM1994} (also see \cite{LeeSkipperSpeakmanMPB2018}, and the many references therein).
\cite{lee_gaining_2020,lee2020piecewise} first developed these ideas in the 
context of perspective relaxation, for the univariate case.
Following \cite{lee_gaining_2020,lee2020piecewise}, we also use $(d+2)$-dimensional volume as a measure for comparing relaxations. We have $\bx\in \mathbb{R}_{\geq 0}^d$, a binary indicator variable $z$ 
keeping track of whether $\bx=\mathbf{0}$ or $\bx\in J$, and a further variable $y\in \mathbb{R}$
which ``captures'' $f(\bx)$; so $d+2$ variables in total. 

\medskip
\noindent {\bf Organization and contributions.} In what follows, we formally define our sets of interest:
a disjunctive set, the perspective relaxation, and the na\"{i}ve relaxation. 
In Section \ref{sec:PR_vol}, we derive general formulae for the volumes of the perspective relaxation and na\"{i}ve relaxation. These formulae both require integrating over a simplex, and so 
in Section \ref{sec:int_simplex}, we survey, connect and extend relevant results in the literature concerning integration over a simplex.
This survey is an important contribution of our work as
it collects these results for optimizers, in one place and in an accessible form;
we rely on some of these results in Section \ref{sec:multi}, and other results 
are natural tools that could be used to push forward further on our motivating topic.
In Section \ref{sec:multi}, we derive the formula for the volume of the na\"{i}ve relaxation for two natural families of functions, generalizing what is known for the univariate case.
We also demonstrate how to work numerically, when there is no closed-form integration
formula. In Section \ref{sec:conc}, we make some brief conclusions.

\medskip
\noindent {\bf Notation.} In what follows, we use boldface lower-case for vectors and boldface upper-case for matrices. $\displaystyle\be_n^{d}$ denotes the $n$-th unit vector in $\R^{d}$, and the superscript $d$ is often dropped if the dimension is clear from the context. 
$\Delta_d:=\{\bx\in\R^d_{\geq 0} ~:~ \sum_{j=1}^d x_j\le 1\}=\conv\{\mathbf{0},\be_1,\be_2,\dots,\be_d\}$ denotes the standard $d$-simplex in $\R^d$.
$J:=\conv\{\bv_0,\bv_1,\dots,\bv_d\}\subset \R_{\geq 0}^d$ denotes an arbitrary $d$-simplex in $\R_{\geq 0}^d$, where $\bv_0,\bv_1,\dots,\bv_d$ are the $d+1$ (affinely independent) vertices of $J$. 
An affine transformation from $J$ to $\Delta_d$ can be used to extend integration results from $\Delta_d$ to a general $J$ (see for example, \cite{lasserre_simple_2020,rouigueb_integration_2019}): 
\begin{equation}\label{eqn:affine_transformation_B}
\bx\in J \quad \Leftrightarrow \quad \bB^{-1}(\bx-\bv_0)\in\Delta_d\thinspace,
\end{equation}
where $\bB: = \left[\bv_1-\bv_0,~\dots~,\bv_d-\bv_0\right]$.
Finally, we use $i$ to denote the imaginary unit.

\medskip

Generally, we can triangulate any polytope into simplices (see for example, \cite{de2010triangulations}). So we consider the case when the convex set is a simplex as both a natural and fundamental starting point and a building block for the case of a general polytope. 
Thus, we concentrate our efforts on considering models for the case of a single simplex.
That is, in what follows, we have a convex function $f:~\R^d_{\geq 0}\rightarrow \R$
and a single simplex $J \subset \R^d_{\geq 0}\setminus \{\mathbf{0}\}$. 
%
We define the disjunctive set
\begin{align*}
& D(f,J) := \left\{\mathbf{0}_{d+2}\right\}\cup D_1(f,J), \mbox{ where} \\
& D_1(f,J) : = \left\{(\bx,y,1)\in \R^d\times \R \times \{0,1\} ~:~
\mu(\bx) \geq y \geq f(\bx), ~\bx\in J\right\},
\end{align*}
and $\mu(\bx)$ is the linear function that agrees with $f(\bx) $ at the $d+1$ vertices of the simplex $J$.
Note that because we assume that $f$ is convex, we have that $f(\bx) \leq \mu(\bx)$ for $\bx\in J$.
Within the hyperplane determined by $z=1$, the set $D_1(f,J)$ is the convex hull of the graph
of $f$ on the domain $J$. The disjunction models the choice of either $\bx =\mathbf{0}_d$~, $y=0$
or $(\bx,y)$ is in the convex hull of the graph of $f$ on the domain $J$. The disjunction does this
through the variable $z\in\{0,1\}$.

The \emph{perspective function} 
\[
\tilde{f}(\bx,z):= \left\{
          \begin{array}{ll}
             z f(\bx/z), & \hbox{for $z>0$;} \\
             +\infty, & \hbox{otherwise}
          \end{array}
         \right.
\]
is well-known to be a convex function, when $f:~\R^d \rightarrow \R$ is convex  (see \cite[Sec. IV.2.2, Page 160]{perspecbook}). 
Importantly, if we evaluate the closure of $\tilde{f}$ (whose epigraph is the closure of the epigraph of $\tilde{f}$, see \cite[Sec. IV, Definition 1.2.4, Page 149]{perspecbook})
at $(\mathbf{0}_d,0)$, we get $0$ (see \cite[Sec. IV, Remark 2.2.3, Page 162]{perspecbook}).
So we can define the \emph {(higher-dimensional) perspective relaxation}
\begin{align*}
\hypertarget{Pfj}{P(f,J)}
:=&\cl \left\{ (\bx,y,z) \in \R^d\times \R \times \R ~:~ \right.\\
&\left. \qquad \tilde{\mu}(\bx,z) \geq y \geq \tilde{f}(\bx,z), ~\bx\in z\cdot J,~ 1\geq z > 0
\vphantom{\R^d}\right\},
\end{align*}
where the upper bound $\tilde{\mu}(\bx,z)$ is the perspective function of the linear function $\mu(\bx)$, and is thus linear itself.

Some comments on \hyperlink{Pfj}{\hyperlink{Pfj}{$P(f,J)$}}:
\begin{itemize}
\item \hyperlink{Pfj}{\hyperlink{Pfj}{$P(f,J)$}} intersects the hyperplane defined by $z=0$ at the single point $\mathbf{0}_{d+2}$, and it intersects the hyperplane defined by $z=1$ at $D_1(f,J)$. It is clear that \hyperlink{Pfj}{\hyperlink{Pfj}{$P(f,J)$}} is the convex hull of $D(f,J)$.
\item When $d=1$, \hyperlink{Pfj}{\hyperlink{Pfj}{$P(f,J)$}} is the perspective relaxation of  \cite{lee_gaining_2020} (and others). 
\item If the simplex $J\subset \R^d_{\geq 0}$ is described by linear inequalities $\bA\bx\le \bb$, then we can write $\bx\in z\cdot J$ as the homogeneous system $\bA\bx\le \bb z$. 
\item The constraint $y \geq f(\bx)$ is equivalent to $(\bx,y,1)\in K_f$, where 
\[
K_f:= \left\{
(\bx,y,z) \in \R^d\times \R \times \R ~:~  y \geq \tilde{f}(\bx,z),~ z>0
\right\}
\]
is a convex cone, whose closure is \hyperlink{Pfj}{\hyperlink{Pfj}{$P(f,J)$}} without the upper bound $\tilde{\mu}(\bx,z)$. So relaxing the disjunction $D(f,J) $ to \hyperlink{Pfj}{\hyperlink{Pfj}{$P(f,J)$}} enables us to use an interior-point conic solvers (like \verb;MOSEK; and \verb;SDPT3;)
\emph{whenever appropriate barriers are available in the solver}.  
\item Considering $f(\bx)$ at each of the $d+1$ vertices of $J$, there is a unique hyperplane in the variables $(\bx,y)\in\R^{d+1}$ passing through these $d+1$ points.
Suppose that $J:=\conv\{\bv_0,\bv_1,\dots,\bv_d\}\subset \R^d$. 
Then $\mu(x)$ can be defined as
\begin{equation}\label{eqn:mu}
    \mu(x): = \bw^\top
\bB^{-1}(\bx-\bv_0) + f(\bv_0),
\end{equation}
where  $\bB$ is from \eqref{eqn:affine_transformation_B}, and $\bw^\top:=\left[f(\bv_1)-f(\bv_0),~\dots~,f(\bv_d)-f(\bv_0)\right]\in\R^{1\times d}$. 
Therefore, 
\[
\tilde{\mu}(\bx,z) = z\mu(\bx/z) = 
\bw^\top \bB^{-1}(\bx-z \bv_0) + f(\bv_0)z~.
\]
\end{itemize}

Extending a key setting from   \cite{lee_gaining_2020}, 
we consider the following special case: the domain of the convex function $f$ is $\conv(J\cup \{\mathbf{0}\})=\{z\cdot J: 0\le z\le 1\}$, and $f(\mathbf{0})=0$. We can then define the \emph{(higher-dimensional) na\"{i}ve relaxation}:
\begin{align*}
\hypertarget{P0fj}{P^0(f,J)}
:=&\left\{ (\bx,y,z) \in \R^d\times \R \times \R ~:~\right.\\
&\qquad\left.\tilde{\mu}(\bx,z) \geq y \geq f(\bx), ~\bx\in z\cdot J,~ 1\geq z \geq 0 \right\},
\end{align*}
where the upper bound $\tilde{\mu}(\bx,z)$ is defined as in the perspective function of $\mu(\bx)$.
It is clear that \hyperlink{P0fj}{$P^0(f,J)$} is convex, due to the convexity of $f$ and the linearity of the other constraints.

Given any $\bx,z$ such that $\bx\in z\cdot J$ and $1\geq z\geq 0$, we have $z f(\bx/z) + (1-z) f(\mathbf{0}) \ge f(\bx)$ because of the convexity of $f$, which implies that $z f(\bx/z) \ge f(\bx)$. From this, we can see that $\hyperlink{Pfj}{P(f,J)}  \subseteq \hyperlink{P0fj}{P^0(f,J)} $, i.e., the na\"{i}ve relaxation contains the perspective relaxation (as holds when $d=1$),  which implies that \hyperlink{P0fj}{$P^0(f,J)$} is also a relaxation of $D(f,J)$.
We can readily see that \hyperlink{P0fj}{$P^0(f,J)$} is easier to handle than \hyperlink{Pfj}{$P(f,J)$},
because it involves $f$ rather than the perspective function of $f$. 
So it is natural to try and understand,  depending on $f$ and $J$,
how much stronger 
\hyperlink{Pfj}{$P(f,J)$} is compared to \hyperlink{P0fj}{$P^0(f,J)$}.

Figure \ref{fig:NRPR} shows the comparison of the  na\"{i}ve and perspective relaxation for an example in the univariate case. The left subfigure shows $D(f,J)=\{\mathbf{0}_{d+2}\}\cup D_1(f,J)$ (in black) and the lower bounds $f(\bx)$ (in blue) and $\tilde{f}(\bx,z)$ (in yellow); the right subfigure is a cross section on the plane $z=\frac12$ for the upper bound $\tilde{\mu}(\bx,z)$ (in black) and two lower bounds in the relaxations.
\begin{figure}[H]
    \centering
    \includegraphics[width=0.5\textwidth]{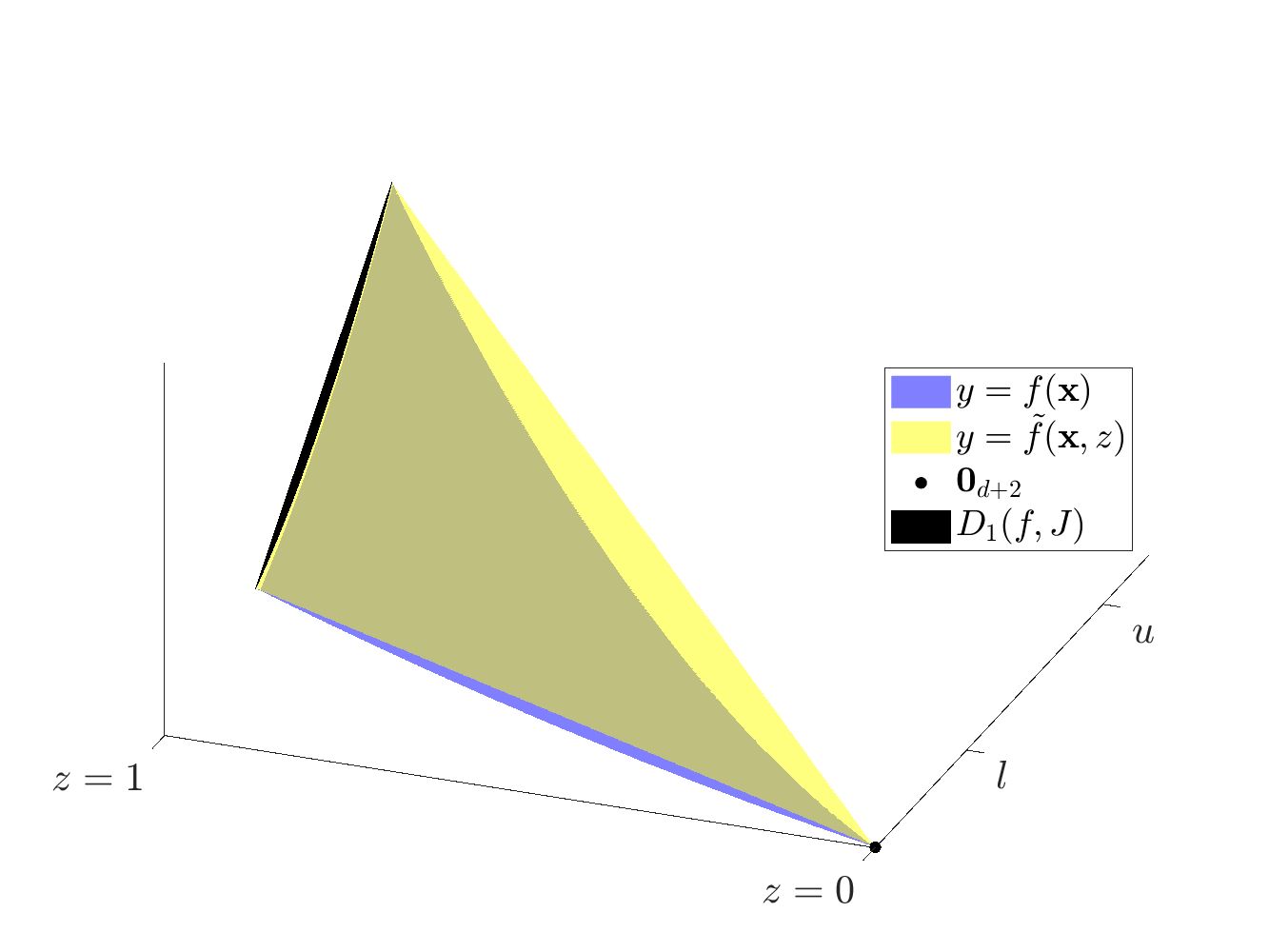}
    \includegraphics[width=0.45\textwidth]{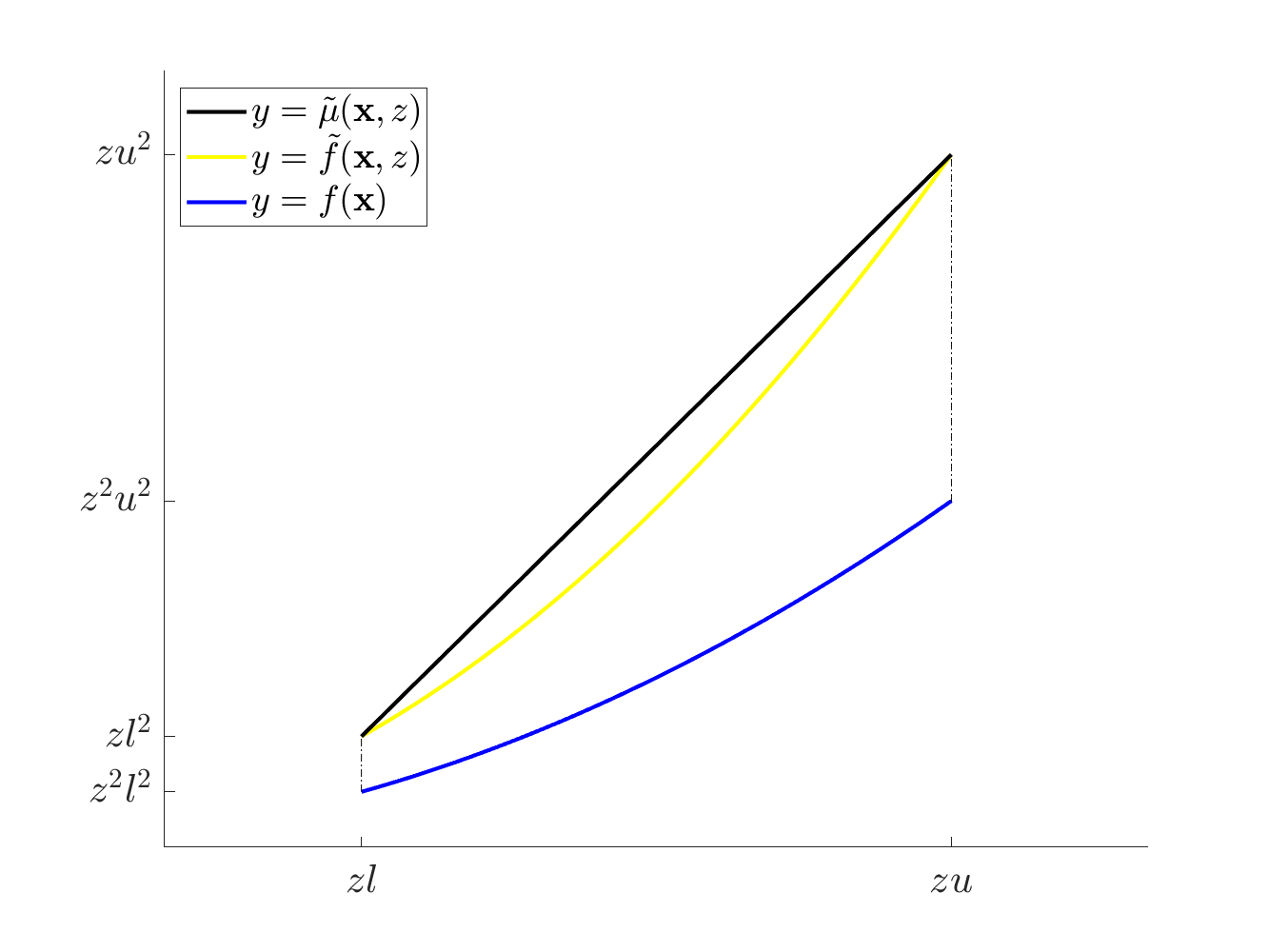}
    \caption{The na\"{i}ve and perspective relaxation for $d=1$, $f(\bx) = x_1^2$, $J=[\ell,u]=[2,5]$.}
    \label{fig:NRPR}
\end{figure}

The univariate case can be used to handle \emph{separable} multivariate convex functions. In what follows, we are more interested in \emph{nonseperable} convex functions, like $(\bc^\top \bx)^n$, $\mathrm{e}^{\bc^\top\bx}$, and the following example.

\begin{example}\label{ex:logsumexp}
A nice nonseparable convex function is the ``log-sum-exp''\footnote{ We subtract a constant $\log d$ from 
the ususal ``log-sum-exp'' function $\log\sum_{j=1}^d\mathrm{e}^{x_j}$ to satisfy $f(\mathbf{0})=0$. See \url{https://docs.scipy.org/doc/scipy/reference/generated/scipy.special.logsumexp.html}} function
$f(\bx):=\log  \frac{1}{d}\sum_{j=1}^d\mathrm{e}^{x_j}$, which is a smooth  under-estimator of the function $\max\{x_1,\dots,x_d\}$ $ \left(f(\bx)\le \max\{x_1,\dots,x_d\},\lim_{u\rightarrow \infty} \frac{f(u\bx)}{u}=\max\{x_1,\dots,x_d\}\right)$.
The ``log-sum-exp'' inequality  $y\geq f(\bx)$ \emph{could} be modeled with exponential-cone constraints (see \cite[Sec. 5.2.6]{mosek} and \cite{mosekcheat}):
\begin{align*}
& \sum_{j=1}^ d w_j \leq d,\\
    & (w_j,1,x_j-y) \in K_{\exp}:=\left\{ (u_1,u_2,u_3):~ u_1 \geq u_2 \mathrm{e}^{u_3/u_2},~ u_2\geq 0 \right\},\\
   &\qquad \mbox{ for } j=1,\ldots,d.
\end{align*}
\end{example}
But in fact, even general nonlinear-programming solvers can comfortably work directly with  $y\geq \log \frac1d\sum_{j=1}^d  \mathrm{e}^{x_j}$. So, if we are satisfied with the associated na\"{i}ve relaxation (notice that $f(\mathbf{0})=0$), we can reliably use
general nonlinear-programming solvers. 

Going further, perspectivizing, we are led to the stronger inequality  
 $y\geq z \log \frac1d\sum_{j=1}^d  \mathrm{e}^{x_j/z}$, which is not well handled by
 general nonlinear-programming solvers. But, similarly to how we conically modeled
 $y\geq \log \frac1d\sum_{j=1}^d  \mathrm{e}^{x_j}$, 
 we can model $y\geq z \log \frac1d\sum_{j=1}^d  \mathrm{e}^{x_j/z}$
  by 
 \begin{align*}
& \sum_{j=1}^ d w_j \leq dz,\\
    & (w_j,z,x_j-y) \in K_{\exp}:=\left\{ (u_1,u_2,u_3):~ u_1 \geq u_2 \mathrm{e}^{u_3/u_2},~ u_2\geq 0 \right\},\\
   &\qquad \mbox{ for } j=1,\ldots,d,
\end{align*}
which is nicely handled in \verb;MOSEK;, \emph{but which now restricts what other types of (even nonconvex) constraints can be in a model and could generally lead to slower solves.} 

\section{Volumes of relaxations}\label{sec:PR_vol}

The well-known volume formula for the $d$-simplex $J:=\conv\{\bv_0,\bv_1,\dots,\bv_d\}\subset \R^d$ is 
$$
\vol(J) = \int_{J}1 d\bx = \frac{1}{d!}\left|\det\left[\bv_1-\bv_0,~\dots~,\bv_d-\bv_0\right]\right| =\frac{1}{d!}\left|\det\begin{bmatrix}
    \bv_0 & \bv_1 & \dots & \bv_d\\
    1   &  1  & \dots & 1
\end{bmatrix}\right|.
$$
\begin{lemma}\label{lem:int_mu}
    Suppose that the $d$-simplex $J:=\conv\{\bv_0,\bv_1,\dots,\bv_d\}\subset \R^d$, and $\mu(\bx)$ is defined by \eqref{eqn:mu}. Then
    $$
    \int_{J} \mu(\bx) d\bx =\frac{1}{d+1}\vol(J)\sum_{j=0}^d f(\bv_j)=\frac{1}{(d+1)!}\left|\det\begin{bmatrix}
        \bv_0 & \bv_1 & \dots & \bv_d\\
        1   &  1  & \dots & 1
    \end{bmatrix}\right|
    \sum_{j=0}^d f(\bv_j).
    $$
\end{lemma}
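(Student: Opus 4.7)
The plan is to express $\mu(\bx)$ in terms of barycentric coordinates and then exploit symmetry among them. Since $\mu$ is the unique affine function on $\R^d$ interpolating the data $\mu(\bv_j)=f(\bv_j)$ for $j=0,1,\dots,d$, I can write
\[
\mu(\bx)=\sum_{j=0}^{d} f(\bv_j)\,\lambda_j(\bx),\qquad \bx\in J,
\]
where $\lambda_0,\dots,\lambda_d$ are the barycentric coordinates of $\bx$ with respect to $J$, i.e., the unique affine functions satisfying $\lambda_j(\bv_k)=\delta_{jk}$ and $\sum_{j=0}^d\lambda_j\equiv 1$. This representation agrees with the formula \eqref{eqn:mu} because $(\lambda_1(\bx),\dots,\lambda_d(\bx))^\top=\bB^{-1}(\bx-\bv_0)$ and $\lambda_0(\bx)=1-\sum_{j=1}^d\lambda_j(\bx)$.

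Next, I would show that
\[
\int_{J} \lambda_j(\bx)\, d\bx \;=\; \frac{\vol(J)}{d+1}, \qquad j=0,1,\dots,d.
\]
The cleanest route is a symmetry argument: for any two indices $j,k$, there is an affine automorphism of $J$ (swapping $\bv_j$ with $\bv_k$ and fixing the remaining vertices) whose Jacobian has absolute value $1$ and which interchanges $\lambda_j$ with $\lambda_k$. Hence all $d+1$ integrals are equal. Summing them and using $\sum_{j=0}^d\lambda_j\equiv 1$ gives $(d+1)\int_J\lambda_j\,d\bx = \vol(J)$. Alternatively, one may use the affine change of variable $\bx=\bv_0+\bB\bt$ mapping $\Delta_d$ onto $J$ to reduce the claim to the classical Dirichlet/beta integral $\int_{\Delta_d} t_j\,d\bt=1/(d+1)!$ for $j=1,\dots,d$, and for $j=0$ use $\lambda_0=1-\sum_{j=1}^d t_j$.

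Combining the two steps gives the first equality:
\[
\int_{J}\mu(\bx)\,d\bx \;=\; \sum_{j=0}^{d} f(\bv_j) \int_{J} \lambda_j(\bx)\, d\bx \;=\; \frac{\vol(J)}{d+1}\sum_{j=0}^{d} f(\bv_j).
\]
The second equality in the statement follows immediately by substituting the determinantal volume formula for $\vol(J)$ recalled just before the lemma. No step should present a real obstacle; the only subtlety is justifying the symmetry (or equivalently, carrying out the explicit integration over $\Delta_d$), which is standard.
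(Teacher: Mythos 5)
Your proof is correct. The only substantive point to compare is how the key integral is justified. The paper proves the lemma by writing $\mu$ via the explicit formula \eqref{eqn:mu}, changing variables to the standard simplex $\Delta_d$, and then invoking Lemma \ref{lem:simplex} (the Baldoni et al.\ formula for $\int_J(\bc^\top\bx)^n\,d\bx$, with $n=1$) to get $\int_{\Delta_d}\bw^\top\bt\,d\bt=\frac{1}{(d+1)!}\sum_{j=1}^d w_j$; a short algebraic rearrangement then yields $\frac{1}{d+1}\vol(J)\sum_j f(\bv_j)$. You instead rewrite $\mu=\sum_j f(\bv_j)\lambda_j$ in barycentric coordinates and establish $\int_J\lambda_j=\vol(J)/(d+1)$ by a vertex-permutation symmetry argument, which is self-contained and avoids citing any integration formula at all; your ``alternative route'' via the change of variables $\bx=\bv_0+\bB\bt$ and the Dirichlet integral $\int_{\Delta_d}t_j\,d\bt=1/(d+1)!$ is essentially the paper's computation (that integral is the $n=1$ case of Lemma \ref{lem:simplex}, or of Proposition \ref{prop:mono_standard_simplex}). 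The barycentric formulation makes the final answer --- volume times the average of the vertex values --- transparent without the intermediate cancellation of the $f(\bv_0)$ terms that the paper has to carry out; the paper's route has the advantage of exercising the machinery (Lemma \ref{lem:simplex}) that it needs later anyway. Both arguments are complete and correct.
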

\begin{proof}
    \begin{align*}
        \int_{J} \mu(\bx) d\bx &= \int_{J}  \left(\bw^\top \bB^{-1}(\bx-\bv_0) + f(\bv_0)\right)~d\bx\\
        &= d!\vol(J)\int_{\Delta_d}  \left(\bw^\top \bt +f(\bv_0)\right)~d\bt~,
    \end{align*}
    %
    %
    %
    where $\Delta_d=\conv\{\mathbf{0},\be_1,\dots,\be_d\}$. We use Lemma \ref{lem:simplex} (from the next section) 
    to calculate the exact integral of a linear form over a simplex:
    $$
    \int_{\Delta_d} \bw^\top \bt ~ d\bt = \frac{1}{(d+1)!}\sum_{j=1}^{d}w_j~.
    $$
    Therefore,
    $$
    \int_{J} \mu(\bx) d\bx =\vol(J)\left(\frac{1}{d+1}\sum_{j=0}^d (f(\bv_j)-f(\bv_0)) + f(\bv_0)\right),
    $$
  and the result follows. \qed
\end{proof}

Generalizing from the univariate case \cite[Thm. 1]{lee_gaining_2020}, 
we have the following simple formula for the 
volume of \hyperlink{Pfj}{$P(f,J)$}.
\begin{theorem}\label{thm:perspecvol}
    Suppose that $f$ is a continuous and convex function 
    on the $d$-simplex $J:=\conv\{\bv_0,\allowbreak \bv_1,\dots,\bv_{d}\} \subset \R^d_{\geq 0}\setminus \{ \mathbf{0}\}$. Then
    $$
    \vol(P(f,J)) = \frac{1}{(d+2)!}\left|\det\begin{bmatrix}
        \bv_0 & \bv_1 & \dots & \bv_d\\
        1   &  1  & \dots & 1
    \end{bmatrix}\right|
    \sum_{j=0}^d f(\bv_j) - \frac{1}{d+2}\int_{J} f(\bx)d\bx .
    $$
\end{theorem}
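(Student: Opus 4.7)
The plan is to slice $P(f,J)$ by the hyperplanes $\{z = \text{const}\}$ and integrate the cross-sectional $(d+1)$-dimensional volumes against $dz$. Since the closure only adds the single point $\mathbf{0}_{d+2}$ (a set of measure zero), I can work with the open slab $z \in (0,1]$ without loss.

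First, for a fixed $z \in (0,1]$, the cross-section is
\[
S(z) := \left\{(\bx,y) \in \R^d \times \R ~:~ \bx \in z\cdot J,~ z f(\bx/z) \le y \le z\mu(\bx/z)\right\},
\]
so its $(d+1)$-dimensional volume equals $\int_{z\cdot J} z\bigl[\mu(\bx/z) - f(\bx/z)\bigr]\,d\bx$. I would then perform the substitution $\bx = z\bt$, noting that $\bt$ ranges over $J$ and that $d\bx = z^{d}\,d\bt$, producing the clean factorization
\[
\vol(S(z)) = z^{d+1} \int_{J}\bigl[\mu(\bt) - f(\bt)\bigr]\,d\bt .
\]

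Next I would apply Fubini and integrate in $z$:
\[
\vol(P(f,J)) = \int_0^1 \vol(S(z))\,dz = \left(\int_0^1 z^{d+1}\,dz\right)\int_{J}\bigl[\mu(\bt)-f(\bt)\bigr]\,d\bt = \frac{1}{d+2}\int_J \bigl[\mu(\bt) - f(\bt)\bigr] d\bt.
\]
The convexity hypothesis ensures the integrand is nonnegative, so there is no issue with orientation or sign, and continuity of $f$ guarantees measurability of the region.

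Finally, substituting the closed-form expression from Lemma \ref{lem:int_mu} for $\int_J \mu(\bt)\,d\bt$ yields exactly the stated identity. The only genuinely delicate point is justifying the slicing argument over the \emph{closure}: I would remark that $P(f,J)$ differs from the open-slab description only at $z=0$, which contributes a Lebesgue-null set, so the computation above indeed gives $\vol(P(f,J))$. All the other steps are mechanical, and the key algebraic gain is the clean appearance of the factor $z^{d+1}$ under the rescaling $\bx \mapsto z\bt$, which decouples the $z$-integral from the integral over $J$.
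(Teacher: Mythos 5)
Your proof is correct and is essentially the same as the paper's: the paper observes that $P(f,J)$ is a hyperpyramid with apex $\mathbf{0}_{d+2}$ and base at $z=1$, invokes the volume formula $\frac{1}{d+2}\mathcal{B}\mathcal{H}$, and evaluates $\mathcal{B}=\int_J(\mu(\bx)-f(\bx))\,d\bx$ via Lemma~\ref{lem:int_mu}, which is exactly what your $z$-slicing computation establishes (your rescaling $\bx=z\bt$ yielding the factor $z^{d+1}$ is just the standard derivation of that pyramid formula). The handling of the closure and the use of Lemma~\ref{lem:int_mu} match the paper's reasoning, so there is nothing to flag.
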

\begin{proof}
    Notice that \hyperlink{Pfj}{$P(f,J)$} is a hyperpyramid in $\R^{d+2}$ with apex $\mathbf{0}_{d+2}$ and base a $(d+1)$-dimensional convex set in the $z=1$ hyperplane  defined by the system of inequalities,
    \begin{align*}
     &\mu(\bx) \geq y \geq f(\bx)\\
     &\bx \in J.
    \end{align*}
  The volume of such a hyperpyramid is $\frac{1}{d+2}\mathcal{B}\mathcal{H}$, where $\mathcal{B}$ is the $(d+1)$-dimensional volume of the base, and $\mathcal{H}$ is the perpendicular height of the apex over the affine span of the base.
    In this hyperpyramid, $\mathcal{H}=1$ because the apex is $\mathbf{0}_{d+2}$ and the hyperplane containing the base is defined by the
    equation $z=1$.
    We only need to compute the volume of the base via the integral
    \begin{align*}
      \mathcal{B} &= \int_{J} (\mu(\bx) - f(\bx)) d\bx\\
          &= \frac{1}{(d+1)!}\left|\det\begin{bmatrix}
            \bv_0 & \bv_1 & \dots & \bv_d\\
            1   &  1  & \dots & 1
        \end{bmatrix}\right|
        \sum_{j=0}^d f(\bv_j) - \int_{J} f(\bx)d\bx   ~.
    \end{align*}
    The second equation follows from Lemma \ref{lem:int_mu}. Therefore,
    $$
    \vol(P(f,J)) = \frac{1}{(d+2)!}\left|\det\begin{bmatrix}
        \bv_0 & \bv_1 & \dots & \bv_d\\
        1   &  1  & \dots & 1
    \end{bmatrix}\right|
    \sum_{j=0}^d f(\bv_j) - \frac{1}{d+2}\int_{J} f(\bx)d\bx ~.
    $$
    \qed
\end{proof}

Theorem \ref{thm:perspecvol} reduces calculation of $\vol(\hyperlink{Pfj}{P(f,J)} )$
to the calculation of the integral $\int_{J} f(\bx)d\bx$. We will 
make a detailed exploration of the
fundamental problem of integration over a simplex in Section \ref{sec:int_simplex}.
We note that calculation of the volume of \hyperlink{P0fj}{$P^0(f,J)$} is generally more complicated than the integral $\int_{J} f(\bx)d\bx$ (see Theorem \ref{thm:naivevol}).
We address some relevant special cases in Section \ref{sec:multi}.
\begin{theorem}\label{thm:naivevol}
    Suppose that $f(\mathbf{0})=0$ and $f$ is continuous and convex on $\conv(J\cup \{\mathbf{0}\})$, where the $d$-simplex $J:=\conv\{\bv_0,\allowbreak \bv_1, \dots,\bv_{d}\} \subset \R^d_{\geq 0}\setminus \{ \mathbf{0}\}$. Then
 \begin{align*}
   \vol(\hyperlink{P0fj}{P^0(f,J)} )) 
    &=\frac{1}{(d+2)!}\left|\det\begin{bmatrix}
        \bv_0 & \bv_1 & \dots & \bv_d\\
        1   &  1  & \dots & 1
    \end{bmatrix}\right|
    \sum_{j=0}^d f(\bv_j) - \int_0^1 z^d\int_{J} f(z\bx) d\bx dz. 
    \end{align*}  
\end{theorem}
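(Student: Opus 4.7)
The plan is to compute $\vol(P^0(f,J))$ by slicing along the $z$-axis. In contrast to $P(f,J)$, the na\"{i}ve relaxation is not in general a hyperpyramid with apex at $\mathbf{0}_{d+2}$, so the shortcut used in the proof of Theorem~\ref{thm:perspecvol} is unavailable; instead, Fubini's theorem gives
$$
\vol(P^0(f,J)) = \int_0^1 \left(\int_{z\cdot J}\bigl(\mu(\bx,z)-f(\bx)\bigr)\,d\bx\right) dz.
$$
The inner integrand is nonnegative, since for $\bu\in J$ and $z\in[0,1]$ the convexity of $f$ together with $f(\mathbf{0})=0$ gives $f(z\bu)\le z f(\bu)\le z\mu(\bu) = \mu(z\bu,z)$, so $P^0(f,J)$ is indeed the vertical region between the two surfaces over the shrunken simplex $z\cdot J$ in each $z$-slice.

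Next I would handle the two pieces of the inner integral separately via the substitution $\bx=z\bu$, $\bu\in J$, whose Jacobian is $z^d$. For the upper piece, since $\mu(\bx,z)$ is the perspective of the linear $\mu(\bx)$, the identity $\mu(z\bu,z)=z\mu(\bu)$ yields
$$
\int_{z\cdot J}\mu(\bx,z)\,d\bx \;=\; z^{d+1}\int_J \mu(\bu)\,d\bu,
$$
and Lemma~\ref{lem:int_mu} evaluates this in closed form. For the lower piece, the same substitution produces
$$
\int_{z\cdot J} f(\bx)\,d\bx \;=\; z^d\int_J f(z\bu)\,d\bu,
$$
and here, unlike in the $\mu$-case, no homogeneity is available, so the factor of $z$ inside $f(z\bu)$ cannot be pulled out.

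Finally, integrating over $z\in[0,1]$ and using $\int_0^1 z^{d+1}\,dz = \tfrac{1}{d+2}$ on the first piece multiplies the $\tfrac{1}{(d+1)!}$ from Lemma~\ref{lem:int_mu} into the $\tfrac{1}{(d+2)!}$ of the claimed formula, producing the first term; the second piece is exactly the claimed double integral $\int_0^1 z^d\int_J f(z\bx)\,d\bx\,dz$. The only conceptual obstacle is the one just flagged: because the inner integrand $f(z\bu)$ carries $z$-dependence that does not separate, the formula does \emph{not} collapse to a clean expression in $\int_J f(\bx)\,d\bx$ as happens for $\vol(P(f,J))$. This is precisely why the na\"{i}ve volume is harder to evaluate than the perspective volume and motivates the case analysis for specific families of $f$ carried out in Section~\ref{sec:multi}.
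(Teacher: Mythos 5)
Your proposal is correct and follows essentially the same route as the paper's proof: slice along $z$, substitute $\bx=z\bu$ with Jacobian $z^d$, use $\mu(z\bu,z)=z\mu(\bu)$ together with Lemma~\ref{lem:int_mu} and $\int_0^1 z^{d+1}\,dz=\tfrac{1}{d+2}$ for the upper piece, and leave the lower piece as the double integral $\int_0^1 z^d\int_J f(z\bx)\,d\bx\,dz$. Your added justification that the integrand is nonnegative (via $f(z\bu)\le zf(\bu)\le z\mu(\bu)$) is a small but welcome extra detail the paper leaves implicit.
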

\begin{proof}
   Considering the definition of the na\"{i}ve relaxation, its volume is
    \begin{align*}
   \vol(\hyperlink{P0fj}{P^0(f,J)} ))  &= \iint_{\bx\in z\cdot J,0\le z\le 1} (\mu(\bx,z) - f(\bx))d\bx dz \\
    &= \int_{0\le z\le 1}z^d\int_{\tilde{\bx}\in J}(z\mu(\tilde{\bx}) - f(z\tilde{\bx})) d\tilde{\bx} dz\\
    &= \int_{0\le z\le 1}z^{d+1}dz\int_{\tilde{\bx}\in J}\mu(\tilde{\bx}) d\tilde{\bx} - \int_0^1 z^d\int_{J} f(z\bx) d\bx dz.\\
    &=\frac{1}{(d+2)!}\left|\det\begin{bmatrix}
        \bv_0 & \bv_1 & \dots & \bv_d\\
        1   &  1  & \dots & 1
    \end{bmatrix}\right|
    \sum_{j=0}^d f(\bv_j) - \int_0^1 z^d\int_{J} f(z\bx) d\bx dz.
    \end{align*}
    The last equation follows from Lemma \ref{lem:int_mu}.
\qed \end{proof}
\begin{remark}
Theorem \ref{thm:naivevol} provides a different formula to compute the volume of the na\"{i}ve relaxation from \cite[Thm. 2 and Cor. 4]{lee_gaining_2020}, by slicing along variable $z$ instead of variable $y$.
\end{remark}

\section{Integration over a simplex}\label{sec:int_simplex}

Integration over a simplex is a well-researched topic.
%
%
In this section, we survey the main results for integrating over a simplex. We add some missing details to the proofs in the literature and unify the notations to make the results more accessible to the optimization community. Our goal is to provide the most general results to serve as a toolbox for future research.

\cite[Thm. 1 and Cor. 3]{baldoni_how_2010} proved that integrating polynomials over a simplex is NP-hard, while there exists a polynomial-time algorithms for integrating polynomials of fixed total degree.
We are mainly interested in closed-form formulae to be carried to later analyses for the integration of
some convex function over a general simplex, e.g., power of linear forms $(\bc^\top\bx)^q$ and exponential of linear functions $e^{\bc^\top\bx}$.
This section is organized by method, including the monomial formula, series expansion, symmetric multilinear form, Fourier transformation, and cubature rules. Table \ref{tab:summary_integrate_simplex} summarizes the functions considered as the integrands in this section, where  \emph{generalized polynomials} refers to  sums of monomials whose exponents are nonnegative real numbers, and  \emph{symmetric multilinear forms} refers to functions $H:(\mathbb{R}^d)^q\rightarrow \R$ such that $H(\bx_1,\bx_2,\dots,\bx_q) = H(\bx_{j_1},\bx_{j_2},\dots,\bx_{j_q})$ for any permutation $(j_1,j_2,\dots,j_q)$ of $(1,2,\dots,q)$ (symmetric) and $H(\lambda\bx_1+\lambda_y\by, \bx_2,\dots,\bx_q) =  \lambda H(\bx_1, \bx_2,\dots,\bx_q)+\lambda_y H(\by, \bx_2,\dots,\bx_q)$ for any $\lambda,\lambda_y\in\mathbb{R}$ (multilinear).

\begin{table}[H]
    \centering
    \begin{tabular}{c|c|c}
    Integrand & Simplex & Subsection  \\
    \hline\hline
    & & \phantom{$\cdot$}\\[-6pt]
    Generalized polynomials & \multirow{2}{*}{$\Delta_d$} & \multirow{2}{*}{\ref{subsec:mono}}\\
    $\sum_{t\in\mathcal{T}}x_1^{\alpha^{t}_1}x_2^{\alpha^{t}_2}\dots x_d^{\alpha^{j}_d}$, where $\alpha_j^t\in\mathbb{R}_{\ge0}$ &  \\[3pt]
    \hline
        & & \phantom{$\cdot$}\\[-6pt]
    Polynomials & $J$ &  \ref{subsec:mono}, \ref{subsec:expansion}, \ref{subsec:sml}, \ref{subsec:cubature}\\[3pt]
    \hline
        & & \phantom{$\cdot$}\\[-6pt]
    Power of affine forms $(\bc^\top\bx+b)^n$, $n\in\mathbb{Z}_{\ge1}$ & $J$ &  \ref{subsec:expansion}\\[3pt]
    \hline
        & & \phantom{$\cdot$}\\[-6pt]
    Symmetric multilinear form & \multirow{2}{*}{$J$} & \multirow{2}{*}{ \ref{subsec:sml}}\\
    $H(\bx_1,\bx_2,\dots,\bx_q)$&  \\[3pt]
    \hline
        & & \phantom{$\cdot$}\\[-6pt]
    Exponentials of affine functions $e^{\bc^\top\bx+b}$ & $J$ &  \ref{subsec:expansion}, \ref{subsec:fourier}
    \end{tabular}
    \caption{Summary for the functions considered in each subsection}
    \label{tab:summary_integrate_simplex}
\end{table}

\subsection{Monomial formula over a standard simplex}\label{subsec:mono}
There is a well-known formula to integrate a particular generalized polynomial over the standard $d$-simplex $\Delta_d$ in $\R^d$ (see, for example \cite{lasserre_simple_2020}).
\begin{proposition}\label{prop:mono_standard_simplex}
\begin{equation*}
\int_{\Delta_d} x_1^{\alpha_1}x_2^{\alpha_2}\dots x_d^{\alpha_d} (1-x_1-\dots - x_d)^{\alpha_{d+1}} d\bx = \frac{\prod_{j=1}^{d+1}\Gamma(\alpha_j+1)}{\Gamma\left(\sum_{j=1}^{d+1}{\alpha_j}+d+1\right)},
\end{equation*}
where $\alpha_j\in \R$, $\alpha_j>-1$, and the usual gamma function $\Gamma(z):=\int_0^\infty x^{z-1} \mathrm{e}^{-x}dx$ for $z>0$.
\end{proposition}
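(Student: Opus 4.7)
The plan is to derive this classical Dirichlet integral by converting it to a product of one-dimensional Gamma integrals through a single change of variables. I would start from the product representation
\[
\prod_{j=1}^{d+1}\Gamma(\alpha_j+1)
= \int_{(0,\infty)^{d+1}} y_1^{\alpha_1}\cdots y_{d+1}^{\alpha_{d+1}}\,\mathrm{e}^{-(y_1+\cdots+y_{d+1})}\,d\by,
\]
which is valid because $\alpha_j+1>0$ for each $j$, and rewrite it in ``radius-and-direction'' coordinates $s:=y_1+\cdots+y_{d+1}\in(0,\infty)$ and $x_j:=y_j/s$ for $j=1,\ldots,d$, so that $1-x_1-\cdots-x_d=y_{d+1}/s$ and $\bx$ ranges over the interior of $\Delta_d$.

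Under the map $(\bx,s)\mapsto\by$ defined by $y_j=sx_j$ and $y_{d+1}=s(1-x_1-\cdots-x_d)$, the Jacobian determinant equals $s^d$; this is the one genuine calculation, and it collapses to a one-line row operation (add rows $1$ through $d$ of the Jacobian matrix to the last row to zero out its first $d$ entries, leaving an upper-triangular matrix with diagonal $s,\ldots,s,1$). After substitution the integrand factors as
\[
\Bigl(\prod_{j=1}^d x_j^{\alpha_j}\Bigr)(1-x_1-\cdots-x_d)^{\alpha_{d+1}}\cdot s^{\sum_{j=1}^{d+1}\alpha_j+d}\,\mathrm{e}^{-s},
\]
so Fubini, justified by absolute convergence under the hypothesis $\alpha_j>-1$, separates the double integral into the target simplex integral times
\[
\int_0^\infty s^{\sum_{j=1}^{d+1}\alpha_j+d}\,\mathrm{e}^{-s}\,ds
= \Gamma\!\left(\sum_{j=1}^{d+1}\alpha_j+d+1\right).
\]
Rearranging gives precisely the claimed closed form.

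The main obstacle is purely bookkeeping: tracking the exponent of $s$ and verifying the Jacobian. As a sanity check, the $d=1$ case collapses to the standard Beta-function identity $B(\alpha_1+1,\alpha_2+1)=\Gamma(\alpha_1+1)\Gamma(\alpha_2+1)/\Gamma(\alpha_1+\alpha_2+2)$. A self-contained alternative is induction on $d$: integrate $x_d$ first via the rescaling $x_d=(1-x_1-\cdots-x_{d-1})t$ to pull out a Beta factor, then invoke the inductive hypothesis on $\Delta_{d-1}$ with new last exponent $\alpha_d+\alpha_{d+1}+1$; the Gamma factors telescope to the claimed formula.
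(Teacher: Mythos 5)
Your proof is correct, and it carries out in detail precisely the two methods the paper merely names for this classical result: your main argument (factoring $\prod_j\Gamma(\alpha_j+1)$ as an orthant integral and passing to ``radius-and-direction'' coordinates with Jacobian $s^d$) is the Laplace-transformation route, and your sketched alternative is the iterated univariate Beta-function route. Both the Jacobian computation and the exponent bookkeeping check out, so nothing further is needed.
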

%
%

To integrate a polynomial over a standard simplex, we can represent it as a sum of monomials and then employ Proposition \ref{prop:mono_standard_simplex} for each monomial, noting that we can take $\alpha_{d+1}=0$ (see for example \cite[Eqn. 2.3]{grundmann1978invariant}).
This idea also appears in \cite{lasserre_simple_2020} with a different interpretation associated with ``Bombieri-type polynomials''.

\subsection{Series expansion}\label{subsec:expansion}
\cite[Sec. 3.2]{baldoni_how_2010} provides several polynomial-time algorithms to calculate the exact integral of a polynomial with fixed degree 
over a general simplex. 
These algorithms are based on  integration formulae over a general simplex, for 
powers of linear functions  with positive integer exponent
and for the exponential of linear functions.
\begin{lemma}[{\cite[Lem. 8 and Rem. 9]{baldoni_how_2010}}]\label{lem:simplex} Suppose that $J:=\conv\{\bv_0,\allowbreak\bv_1,\dots,\bv_d\}\subset \R^d$. Then we have
\begin{align}
    \int_{J}(\bc^\top \bx)^n d\bx &= d! \vol(J)\frac{n!}{(n+d)!} \sum_{\bk\in\Z_{\ge0}^{d+1},\atop ~\|\bk\|_1=n} (\bc^\top \bv_0)^{k_0}\dots(\bc^\top \bv_d)^{k_d}~,\notag\\[5pt]
    \int_{J} \mathrm{e}^{\bc^\top \bx} d\bx &= d! \vol(J) \sum_{\bk\in\Z_{\ge0}^{d+1}} \frac{(\bc^\top \bv_0)^{k_0}\dots(\bc^\top \bv_d)^{k_d}}{(\|\bk\|_1+d)!}. \notag
\end{align}
\end{lemma}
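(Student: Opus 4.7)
The plan is to reduce both identities to the standard simplex via the affine change of variables $\bx = \bv_0 + \bB\bt$ with $\bt \in \Delta_d$, whose Jacobian contributes $|\det\bB| = d!\vol(J)$, and then to invoke the monomial integral of Proposition~\ref{prop:mono_standard_simplex}. Setting $a_j := \bc^\top \bv_j$ for $j=0,1,\dots,d$ and $t_0 := 1 - \sum_{j=1}^d t_j$, a short computation shows
$$
\bc^\top \bx \;=\; a_0 + \sum_{j=1}^d (a_j - a_0)\, t_j \;=\; \sum_{j=0}^d a_j t_j,
$$
so the integrand is a homogeneous linear form in the barycentric coordinates $(t_0, t_1, \dots, t_d)$ of $J$.

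For the first identity, I would expand $\left(\sum_{j=0}^d a_j t_j\right)^n$ by the multinomial theorem as $\sum_{\|\bk\|_1=n} \frac{n!}{\prod_j k_j!}\, \prod_j a_j^{k_j}\, t_j^{k_j}$ and integrate termwise over $\Delta_d$. Each resulting summand falls into the scope of Proposition~\ref{prop:mono_standard_simplex} with exponents $\alpha_j = k_j$, yielding $\int_{\Delta_d} \prod_j t_j^{k_j}\, d\bt = \prod_j k_j! / (n+d)!$. The factors $\prod_j k_j!$ cancel with those in the multinomial coefficient, and multiplying by the Jacobian $d!\vol(J)$ produces the claimed closed form.

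For the exponential integral, I would expand $\mathrm{e}^{\bc^\top \bx} = \sum_{n \ge 0} (\bc^\top \bx)^n / n!$ as a power series and interchange sum and integral, which is justified by uniform (indeed absolute) convergence on the compact simplex $J$. Applying the first identity to each term cancels the $n!$ denominator, and regrouping the inner sums over $\{\bk : \|\bk\|_1 = n\}$ for $n = 0, 1, 2, \dots$ into a single sum over all $\bk \in \Z_{\ge 0}^{d+1}$ produces the stated formula with denominator $(\|\bk\|_1 + d)!$. I do not anticipate any real technical obstacle here: the proof is essentially a bookkeeping exercise combining the affine reduction, the multinomial expansion, and the Dirichlet-type monomial integral over $\Delta_d$. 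The only step meriting brief justification is the interchange of summation and integration in the exponential case, which follows immediately from boundedness of $\bc^\top \bx$ on $J$ and uniform convergence of the exponential series.
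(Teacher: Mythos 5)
Your proof is correct. Note, however, that the paper does not prove this lemma at all: it is imported verbatim from Baldoni et al.\ (\cite[Lem.~8 and Rem.~9]{baldoni_how_2010}), so there is no in-paper argument to compare against. Your self-contained derivation --- affine reduction $\bx=\bv_0+\bB\bt$ with Jacobian $|\det\bB|=d!\vol(J)$, the observation that $\bc^\top\bx=\sum_{j=0}^d a_j t_j$ in barycentric coordinates, the multinomial expansion, and termwise integration via the Dirichlet-type monomial formula of Proposition~\ref{prop:mono_standard_simplex} --- is exactly the standard route, and the bookkeeping checks out: the factors $\prod_j k_j!$ from $\int_{\Delta_d}\prod_j t_j^{k_j}\,d\bt=\prod_j k_j!/(n+d)!$ cancel against the multinomial coefficient, leaving $\frac{n!}{(n+d)!}\sum_{\|\bk\|_1=n}\prod_j a_j^{k_j}$, and summing the first identity over $n$ with the $1/n!$ weights gives the exponential formula. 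There is no circularity in invoking Proposition~\ref{prop:mono_standard_simplex}, since the paper proves that result independently (via the beta function or Laplace transform), and your justification of the sum--integral interchange by uniform convergence on the compact simplex is adequate. The one cosmetic point worth making explicit is the index bookkeeping when applying the Proposition: its exponent $\alpha_{d+1}$ on the factor $(1-t_1-\dots-t_d)$ plays the role of your $k_0$, so that $\sum_{j}\alpha_j=n$ and the denominator is $\Gamma(n+d+1)=(n+d)!$ as you claim.
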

By treating the sequence $\frac{(n+d)!}{n!}\int_{J}(\bc^\top \bx)^n d\bx$ as the coefficient of a formal power series, \cite{baldoni_how_2010} gives a useful generating function using Lemma \ref{lem:simplex}:
%
\begin{theorem}[{\cite[Thm. 10]{baldoni_how_2010}}]\label{thm:taylor_series}
Suppose that $J:=\conv\{\bv_0,\bv_1,\allowbreak\dots,\bv_d\}\subset \R^d$. Then we have
\begin{equation*}\label{eqn:taylor_series}
d!\vol(J)\frac{1}{\prod_{j=0}^{d}(1-t(\bc^\top \bv_j))}
=
\sum_{n=0}^{\infty} \left[\frac{(n+d)!}{n!}\int_{J} (\bc^\top \bx)^n d\bx \right]t^n .
\end{equation*}
\end{theorem}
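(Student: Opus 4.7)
The plan is to prove the identity by expanding the left-hand side as a product of geometric series, reorganizing by total degree, and then recognizing the coefficient sum through the first part of Lemma \ref{lem:simplex}.

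First, I would work in a neighborhood of $t=0$ where $|t(\bc^\top \bv_j)| < 1$ for each $j = 0, 1, \dots, d$, so that each factor admits a convergent geometric expansion
\[
\frac{1}{1-t(\bc^\top \bv_j)} = \sum_{k_j = 0}^{\infty} t^{k_j}(\bc^\top \bv_j)^{k_j}.
\]
Multiplying these $d+1$ absolutely convergent series together and invoking absolute convergence to justify rearrangement, I would obtain
\[
\frac{1}{\prod_{j=0}^{d}(1-t(\bc^\top \bv_j))} = \sum_{\bk \in \Z_{\geq 0}^{d+1}} t^{\|\bk\|_1} \prod_{j=0}^{d}(\bc^\top \bv_j)^{k_j}.
\]

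Next, I would collect terms with equal total degree $n = \|\bk\|_1$ to rewrite this as a power series in $t$:
\[
\sum_{n=0}^{\infty} t^n \sum_{\bk \in \Z_{\geq 0}^{d+1},\, \|\bk\|_1 = n} \prod_{j=0}^{d}(\bc^\top \bv_j)^{k_j}.
\]
Multiplying through by $d!\vol(J)$, the inner sum is exactly what appears in the first formula of Lemma \ref{lem:simplex}, so
\[
d!\vol(J) \sum_{\bk: \|\bk\|_1=n} \prod_{j=0}^{d}(\bc^\top \bv_j)^{k_j} = \frac{(n+d)!}{n!}\int_{J}(\bc^\top \bx)^n d\bx.
\]
Substituting back into the series gives exactly the stated right-hand side.

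The computational content is essentially just bookkeeping; the one issue that deserves care is convergence. I would address this by noting that the identity is between two power series in $t$, both of which converge on a common neighborhood of the origin (the left-hand side is a rational function with no pole at $0$, and the right-hand side's radius of convergence is positive because the coefficients are bounded by $\max_j |\bc^\top \bv_j|^n$ up to polynomial factors in $n$). Since both sides agree as formal power series by the term-by-term argument above, they agree as analytic functions on this neighborhood, and hence by analytic continuation wherever both are defined. No obstacle beyond this convergence check is anticipated, since the core of the argument reduces to the multinomial-style expansion of the product and a direct application of Lemma \ref{lem:simplex}.
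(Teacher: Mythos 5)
Your proof is correct: the paper states this theorem without proof (citing Baldoni et al.) and explicitly describes it as the ``connection between Lemma \ref{lem:simplex} and a power series expansion,'' which is precisely what your geometric-series expansion followed by the first identity of Lemma \ref{lem:simplex} delivers, with the convergence issues handled appropriately. No gaps.
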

Therefore, we can obtain the ``short formulae'' of Brion (in the case of a simplex) under a genericity assumption.
\begin{theorem}[\cite{brion1988points}]
    Suppose that $J:=\conv\{\bv_0,\bv_1,\dots,\bv_d\}\subset \R^d$, and $\bc^\top \bv_j \ne \bc^\top \bv_k$ for all $j\ne k$. Then we have
    \begin{align}
        \int_{J}(\bc^\top \bx)^n d\bx &= d! \vol(J)\frac{n!}{(n+d)!} \sum_{j=0}^d \frac{(\bc^\top \bv_j)^{n+d}}{\prod\limits_{k \,:\, k\neq j} \bc^\top(\bv_j-\bv_k)}, \label{eqn:linear_short}\\
        \int_{J} \mathrm{e}^{\bc^\top \bx} d\bx &= d! \vol(J) \sum_{j=0}^d \frac{\mathrm{e}^{\bc^\top \bv_j}}{\prod\limits_{k \,:\, k\neq j} \bc^\top(\bv_j-\bv_k)}.\label{eqn:exp_short}
    \end{align}
\end{theorem}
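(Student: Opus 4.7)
The plan is to derive both identities from the generating-function identity of Theorem \ref{thm:taylor_series} by means of a partial-fraction decomposition that is made available precisely by the genericity hypothesis $\bc^\top \bv_j \ne \bc^\top \bv_k$ for $j \ne k$. To streamline notation, write $a_j := \bc^\top \bv_j$, so the $a_j$ are pairwise distinct.

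First I would partial-fraction the left-hand side of Theorem \ref{thm:taylor_series}. Using the cover-up method on the simple poles at $t=1/a_j$ gives
\[
\frac{1}{\prod_{j=0}^d (1 - t a_j)} = \sum_{j=0}^d \frac{a_j^d}{\prod_{k\ne j}(a_j - a_k)} \cdot \frac{1}{1 - t a_j}.
\]
Expanding each factor as $\frac{1}{1-t a_j} = \sum_{n \ge 0} (a_j t)^n$ rewrites the left-hand side as
\[
\sum_{n \ge 0} t^n \sum_{j=0}^d \frac{a_j^{n+d}}{\prod_{k\ne j}(a_j - a_k)}.
\]
Matching the coefficient of $t^n$ with the right-hand side of Theorem \ref{thm:taylor_series} and solving for $\int_J (\bc^\top \bx)^n d\bx$ produces \eqref{eqn:linear_short} directly.

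For \eqref{eqn:exp_short}, I would expand $\mathrm{e}^{\bc^\top \bx} = \sum_{n \ge 0} (\bc^\top \bx)^n / n!$, integrate term-by-term, substitute \eqref{eqn:linear_short}, and swap the order of summation (justified by absolute convergence of the exponential on a bounded set). The result is
\[
\int_J \mathrm{e}^{\bc^\top \bx}\, d\bx = d!\vol(J) \sum_{j=0}^d \frac{1}{\prod_{k\ne j}(a_j - a_k)} \sum_{n \ge 0} \frac{a_j^{n+d}}{(n+d)!},
\]
and the inner sum equals $\mathrm{e}^{a_j} - \sum_{m=0}^{d-1} a_j^m / m!$.

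The main obstacle is cleaning up the leftover polynomial tail $\sum_{m=0}^{d-1} a_j^m / m!$. I would discharge it via the classical Lagrange-interpolation identity
\[
\sum_{j=0}^d \frac{a_j^m}{\prod_{k\ne j}(a_j - a_k)} = 0 \quad \text{for } 0 \le m \le d-1,
\]
which holds because the left-hand side is the divided difference of $x^m$ at the $d+1$ distinct nodes $a_0, \ldots, a_d$, and divided differences annihilate polynomials of degree strictly less than $d$. Applying this identity termwise in $m$ kills the polynomial remainder entirely and leaves exactly \eqref{eqn:exp_short}. An alternative path would be to retain a scalar parameter $s$, perform the partial-fraction trick on $\int_J \mathrm{e}^{s\bc^\top \bx}\, d\bx$ directly, and specialize at $s=1$; but this merely repackages the same divided-difference vanishing as the removal of an apparent pole at $s=0$.
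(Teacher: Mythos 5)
Your proposal is correct and follows essentially the same route as the paper: the same partial-fraction decomposition of $\prod_{j=0}^d(1-t\,\bc^\top\bv_j)^{-1}$ fed into Theorem \ref{thm:taylor_series} for \eqref{eqn:linear_short}, and the same term-by-term exponential expansion with the polynomial tail annihilated by the vanishing identity $\sum_{j=0}^d a_j^m/\prod_{k\ne j}(a_j-a_k)=0$ for $0\le m\le d-1$ to obtain \eqref{eqn:exp_short}. The only cosmetic difference is that you justify that vanishing identity via divided differences, whereas the paper derives it from the Lagrange interpolation remainder after the substitution $a_j=1/(\bc^\top\bv_j)$; the two justifications are equivalent and yours is arguably the cleaner statement of the same fact.
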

\begin{proof}
We can decompose the following function into partial fractions and apply the expansion of geometric series:
\begin{align*}
    \frac{1}{\prod_{j=0}^{d}(1-t(\bc^\top \bv_j))} &=\sum_{j=0}^d\frac{1}{1-t(\bc^\top \bv_j)}\frac{(\bc^\top \bv_j)^d}{\prod\limits_{k \,:\,  k\neq j} \bc^\top (\bv_j-\bv_k)}\\
    &= \sum_{j=0}^d\left(\sum_{n=0}^{\infty}t^n(\bc^\top \bv_j)^n \right)\frac{(\bc^\top \bv_j)^d}{\prod\limits_{k \,:\,  k\neq j} \bc^\top (\bv_j-\bv_k)}\\
    &= \sum_{n=0}^{\infty}\left[\sum_{j=0}^d\frac{(\bc^\top \bv_j)^{n+d}}{\prod\limits_{k \,:\,  k\neq j} \bc^\top (\bv_j-\bv_k)}\right]t^n.
\end{align*}
Then \eqref{eqn:linear_short} immediately follows from Theorem \ref{thm:taylor_series}. For \eqref{eqn:exp_short}, we have
\begin{align*}
 \int_{J} \mathrm{e}^{\bc^\top \bx} d\bx &= \sum_{n=0}^{+\infty} \int_{J} \frac{(\bc^\top \bx)^n}{n!} d\bx \\
 &= d! \vol(J)\sum_{n=0}^{+\infty}\frac{1}{(n+d)!} \sum_{j=0}^d \frac{(\bc^\top \bv_j)^{n+d}}{\prod\limits_{k \,:\,  k\neq j} \bc^\top(\bv_j-\bv_k)}\\
 &= d! \vol(J) \sum_{j=0}^d \frac{1}{\prod\limits_{k \,:\,  k\neq j} \bc^\top(\bv_j-\bv_k)}\sum_{n=0}^{+\infty}\frac{(\bc^\top \bv_j)^{n+d}}{(n+d)!}\\
 &= d! \vol(J) \sum_{j=0}^d \frac{\mathrm{e}^{\bc^\top \bv_j}}{\prod\limits_{k \,:\,  k\neq j} \bc^\top(\bv_j-\bv_k)}.
\end{align*}
where the last equation follows from the fact that
\begin{align}\label{fact}
\sum_{j=0}^d \frac{(\bc^\top \bv_j)^n}{\prod\limits_{k \,:\,  k\neq j} \bc^\top(\bv_j-\bv_k)} = 0, ~\text{for}~ n = 0,1,\ldots, d-1.
\end{align}
This is due to the fact that the remainder of Lagrange interpolation polynomials is zero for polynomials of degree at most $d$:
\begin{align}\label{interpolation}
t^n - \sum_{j=0}^d \frac{\prod\limits_{k \,:\,  k\neq j}(t- a_k)}{\prod\limits_{k \,:\,  k\neq j}(a_j-a_k)}a_j^n = 0,~\text{for}~ n = 1,\ldots, d.
\end{align}
Letting $a_j:=1/(\bc^\top \bv_j)$ in \eqref{interpolation} and evaluating at $t=0$, we get \eqref{fact}:
\begin{align*}
    t^n &= \sum_{j=0}^d \frac{\prod\limits_{k \,:\,  k\neq j}(\frac{1}{\bc^\top \bv_k}-t)}{\prod\limits_{k \,:\,  k\neq j}(\frac{1}{\bc^\top \bv_k}-\frac{1}{\bc^\top \bv_j})}\left(\frac{1}{\bc^\top \bv_j}\right)^n\\
     &= \sum_{j=0}^d \frac{\prod\limits_{k \,:\,  k\neq j}(1- (\bc^\top \bv_k)t)}{\prod\limits_{k \,:\,  k\neq j}(\bc^\top \bv_j-\bc^\top \bv_k)}(\bc^\top \bv_j)^{d-n}\\
\Rightarrow~    0 & = \sum_{j=0}^d \frac{(\bc^\top \bv_j)^{d-n}}{\prod\limits_{k \,:\,  k\neq j}(\bc^\top \bv_j-\bc^\top \bv_k)}, ~\text{for}~ n = 1,\ldots, d.
\end{align*}
\qed \end{proof}
In the general case, when the genericity assumption ($\bc^\top \bv_j \ne \bc^\top \bv_k$ for all $j\ne k$) fails, we can take $K\subset \{0,1,\ldots,d\}$  to be an index set of different poles $t=1/(\bc^\top \bv_k)$, and for $k\in K$, we let $m_k:=|\{j\in\{0,1,\ldots,d\}:~\bc^\top \bv_j = \bc^\top \bv_k\}|$, which is the order of the pole.
\begin{cor}[{\cite[Cor. 13]{baldoni_how_2010}}]\label{cor:residue}Suppose that\\
$J:=\conv\{\bv_0,\bv_1,\dots,\bv_d\}\subset \R^d$. Then we have
    \begin{align*}
    &\int_{J}(\bc^\top \bx)^n d\bx \\
    &\quad = d! \vol(J)\frac{n!}{(n+d)!} \sum_{k\in K}\mathrm{Res}\left(\frac{(\epsilon+\bc^\top \bv_k)^{n+d}}{\epsilon^{m_k}\prod\limits_{j\in K\setminus\{k\}} (\epsilon+\bc^\top(\bv_k-\bv_j))^{m_j}},~\epsilon=0\right),
    \end{align*}
where $\mathrm{Res}$ denotes the residue (here at $\epsilon=0$).
\end{cor}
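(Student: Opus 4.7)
The plan is to extract the $t^n$ coefficient from the generating function identity in Theorem \ref{thm:taylor_series} using residue calculus, which naturally handles the higher-order poles that arise when the genericity hypothesis $\bc^\top \bv_j \ne \bc^\top \bv_k$ fails.

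First, I would rewrite the left-hand side of Theorem \ref{thm:taylor_series} by grouping equal values among $\alpha_j := \bc^\top \bv_j$: the denominator becomes $\prod_{k \in K}(1 - t\alpha_k)^{m_k}$, and the task reduces to computing $[t^n] F(t)$ where $F(t) := 1/\prod_{k\in K}(1-t\alpha_k)^{m_k}$. By Cauchy's formula, $[t^n]F(t) = \mathrm{Res}_{t=0}\bigl(F(t)/t^{n+1}\bigr)$. Since $F(t)$ is a proper rational function decaying as $t^{-(d+1)}$ at infinity, $F(t)/t^{n+1}$ decays as $t^{-(n+d+2)}$ and therefore has vanishing residue at $\infty$ for every $n\ge 0$; hence the global residue theorem on the Riemann sphere gives that the residue at $t=0$ equals the negative of the sum of residues at the finite poles $t_k = 1/\alpha_k$.

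Next, I would invert via $u = 1/t$ to translate each pole from $t = 1/\alpha_k$ to $u = \alpha_k$. A direct computation gives
$$F(1/u)\, u^{n-1} = \frac{u^{n+d}}{\prod_{k \in K}(u-\alpha_k)^{m_k}},$$
and the sign inherited from $dt = -du/u^2$ cancels the minus sign from the residue-at-infinity step. Substituting $u = \alpha_k + \epsilon$ locally at each pole yields exactly the integrand
$$\frac{(\epsilon + \bc^\top \bv_k)^{n+d}}{\epsilon^{m_k} \prod_{j \in K\setminus\{k\}} (\epsilon + \bc^\top(\bv_k - \bv_j))^{m_j}}$$
appearing in the statement. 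Multiplying by the prefactor $d!\vol(J)\, n!/(n+d)!$ from Theorem \ref{thm:taylor_series} then delivers the claimed identity.

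The main obstacle I anticipate is the careful sign-tracking under the inversion $t \leftrightarrow 1/t$ combined with the degree count ensuring vanishing at infinity; once these are in hand, the formula assembles mechanically. An alternative route avoiding complex analysis would be to first perturb the vertices so that the genericity assumption holds, apply the short formula of the preceding theorem, and then take a coalescing limit via iterated L'Hopital---this limit is precisely what the residue notation encodes---but working directly with residues keeps the argument uniform in the multiplicities $m_k$ and matches the notation appearing in the statement.
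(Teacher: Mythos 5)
The paper does not actually prove this corollary --- it is stated with a citation to Baldoni et al.\ and immediately followed by a remark on computing the residues --- so there is no in-paper argument to compare against. Your proof is correct and is the natural one: it extends the paper's own treatment of the generic case (where Brion's short formula is obtained by partial-fraction decomposition of the generating function in Theorem \ref{thm:taylor_series} and extraction of the $t^n$ coefficient) to repeated poles, by replacing the explicit partial-fraction coefficients with a global residue computation. Your sign bookkeeping under $t\mapsto 1/u$ checks out: $F(1/u)u^{n-1}=u^{n+d}/\prod_{k\in K}(u-\alpha_k)^{m_k}$ with $\alpha_k:=\bc^\top\bv_k$, the orientation-preserving inversion turns $\mathrm{Res}_{t=1/\alpha_k}\bigl(F(t)/t^{n+1}\bigr)$ into $-\mathrm{Res}_{u=\alpha_k}$ of that expression, and the two minus signs cancel, yielding exactly the stated $\epsilon$-residues after shifting $u=\alpha_k+\epsilon$. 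The one point worth tightening is the case $\bc^\top\bv_k=0$ for some $k$: then $F$ decays only like $t^{-(d+1-m_k)}$ and the corresponding ``pole'' of $F(t)/t^{n+1}$ sits at $t=\infty$ rather than at a finite point, so the residue at infinity is not automatically zero. The global residue theorem still delivers the formula, because $-\mathrm{Res}_\infty$ is precisely the $k$-th summand $\mathrm{Res}_{\epsilon=0}\bigl(\epsilon^{n+d-m_k}/\prod_{j\ne k}(\epsilon-\alpha_j)^{m_j}\bigr)$ in the claimed sum, but as written your ``vanishing residue at $\infty$ for every $n\ge 0$'' silently assumes all $\bc^\top\bv_k\neq 0$. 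Your suggested alternative (perturb to the generic case and coalesce) is also sound and is the other standard route.
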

\noindent If there are poles with high order, the residue can be calculated by the Laurent series expansion. 


Next, we establish an affine generalization of Theorem \ref{thm:taylor_series}. 
\begin{theorem}\label{eqn:taylor_series_2}
Suppose that $J:=\conv\{\bv_0,\bv_1,\dots,\bv_d\}\subset \R^d$, we have
\begin{equation*}
d!\vol(J)\frac{1}{\prod_{j=0}^{d}(1-t(\bc^\top \bv_j+b))}
=
\sum_{n=0}^{\infty} \left[\frac{(n+d)!}{n!}\int_{J} (\bc^\top \bx+b)^n d\bx \right]t^n .
\end{equation*}
\end{theorem}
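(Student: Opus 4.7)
The plan is to parallel the proof of Theorem \ref{thm:taylor_series}, exploiting the partition-of-unity property of barycentric coordinates to absorb the constant $b$. Any $\bx\in J$ has a unique barycentric representation $\bx = \sum_{j=0}^d \lambda_j \bv_j$ with $\lambda_j\ge 0$ and $\sum_{j=0}^d \lambda_j = 1$. Since $b = b\sum_j \lambda_j$, we have
$$
\bc^\top \bx + b \;=\; \sum_{j=0}^d \lambda_j\,(\bc^\top \bv_j + b).
$$
Setting $a_j := \bc^\top \bv_j + b$, the problem reduces to integrating $\left(\sum_{j=0}^d \lambda_j a_j\right)^{\!n}$ over $J$, which is the same structural object appearing in the $b=0$ case, just with shifted coefficients $a_j$ in place of $\bc^\top \bv_j$.

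First I would establish the affine analog of Lemma \ref{lem:simplex}:
$$
\int_J (\bc^\top \bx + b)^n\, d\bx \;=\; d!\,\vol(J)\,\frac{n!}{(n+d)!} \sum_{\bk\in\Z_{\ge 0}^{d+1},\,\|\bk\|_1=n} a_0^{k_0}\cdots a_d^{k_d}.
$$
This follows by expanding $(\sum_j \lambda_j a_j)^n$ by the multinomial theorem, performing the affine change of variables from $J$ onto $\Delta_d$ so that $\lambda_0,\dots,\lambda_d$ become $1-t_1-\cdots-t_d,\,t_1,\dots,t_d$, and applying Proposition \ref{prop:mono_standard_simplex} to each monomial $\prod_j \lambda_j^{k_j}$ using $\Gamma(k_j+1)=k_j!$. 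Multiplying both sides by $(n+d)!/n!$, summing over $n\ge 0$, and invoking the $(d+1)$-factor geometric series identity
$$
\prod_{j=0}^{d} \frac{1}{1-t a_j} \;=\; \sum_{n=0}^{\infty} t^n \sum_{\bk:\,\|\bk\|_1=n} a_0^{k_0}\cdots a_d^{k_d}
$$
delivers the claimed generating-function formula once both sides are multiplied by $d!\,\vol(J)$.

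A more conceptual one-line alternative is to lift: embed $J$ as the $d$-simplex $\tilde J := \{(\bx,1):\bx\in J\}\subset \R^{d+1}$ with vertices $(\bv_j,1)$, which has the same $d$-dimensional volume as $J$, and set $\tilde \bc := (\bc,b)\in\R^{d+1}$ so that $\tilde\bc^\top (\bx,1) = \bc^\top \bx + b$; then apply Theorem \ref{thm:taylor_series} verbatim to $\tilde J$ and $\tilde \bc$. Either way, there is no genuine obstacle. The mildest caveat for the lifting route is a one-sentence justification that Lemma \ref{lem:simplex} extends to a $d$-simplex embedded in $\R^n$ with $n\ge d$, which is immediate from an isometric identification of its affine hull with $\R^d$. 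The barycentric route avoids even that remark and is the one I would write up.
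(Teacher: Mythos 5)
Your proof is correct, and it takes a genuinely different route from the paper's. The paper derives the affine generalization from the exponential case of Lemma \ref{lem:simplex}: it expands $\mathrm{e}^{b}$ as a series, matches coefficients of $b^m(\bc^\top\bv_0)^{k_0}\cdots(\bc^\top\bv_d)^{k_d}$ on both sides of the claimed identity \eqref{eqn:affine_simplex} using the generalized Vandermonde convolution $\sum_{\|\alpha\|_1=m}\prod_j\binom{k_j+\alpha_j}{\alpha_j}=\binom{\|\bk\|_1+m+d}{m}$, and then extracts the power formula by the homogeneity substitution $\bc\mapsto t\bc$, $b\mapsto tb$. You instead prove the affine power formula directly: barycentric coordinates give $\bc^\top\bx+b=\sum_j\lambda_j(\bc^\top\bv_j+b)$ because $\sum_j\lambda_j=1$, after which the multinomial theorem and Proposition \ref{prop:mono_standard_simplex} yield $\int_J(\bc^\top\bx+b)^n\,d\bx=d!\vol(J)\frac{n!}{(n+d)!}\sum_{\|\bk\|_1=n}a_0^{k_0}\cdots a_d^{k_d}$ with $a_j:=\bc^\top\bv_j+b$, and the product of geometric series $\prod_j(1-ta_j)^{-1}$ assembles the generating function. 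Your argument is shorter, avoids the Vandermonde identity entirely, and in effect reproves Lemma \ref{lem:simplex} in the affine setting rather than bootstrapping from it; the paper's route has the side benefit of recording the exponential identity \eqref{eqn:affine_simplex} explicitly. Your lifting alternative (embedding $J$ as $\{(\bx,1):\bx\in J\}\subset\R^{d+1}$ with $\tilde\bc:=(\bc,b)$) is also sound and makes the statement transparent, with only the minor caveat you already note about applying Lemma \ref{lem:simplex} to a $d$-simplex in ambient dimension $d+1$.
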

\begin{proof}
We first prove an affine generalization of Lemma \ref{lem:simplex},
\begin{align}\label{eqn:simplex_affine}
\frac{(n+d)!}{n!}\int_{J}(\bc^\top \bx+b)^n d\bx &= d! \vol(J) \sum_{\bk\in\Z_{\ge0}^{d+1},~ \|\bk\|_1=n} \prod_{j=0}^d (\bc^\top \bv_j+b)^{k_j}.
\end{align}
By the binomial theorem and Lemma \ref{lem:simplex}, we have
\begin{align*}
&\frac{(n+d)!}{n!}\int_{J}(\bc^\top \bx+b)^n d\bx = \frac{(n+d)!}{n!}\sum_{m=0}^n\binom{n}{m} b^{n-m}\int_{J}(\bc^\top\bx)^m d\bx\\
=~& d!\vol(J)\sum_{m=0}^n\frac{(n+d)!}{(n-m)!(m+d)!} b^{n-m}\sum_{\bk\in\Z_{\ge0}^{d+1},~\|\bk\|_1=m} \prod_{j=0}^d(\bc^\top \bv_j)^{k_j}.
\end{align*}
%
%
Now we apply the binomial theorem to the right-hand-side of \eqref{eqn:simplex_affine} and collect terms with the same degree of $b$:
\begin{align*}
~& d! \vol(J) \sum_{\bk\in\Z_{\ge0}^{d+1},\atop \|\bk\|_1=n} \sum_{m=0}^{n} b^{n-m}  \sum_{\mathbf{\alpha}\in \Z_{\ge 0 }^{d+1},~\atop \|\mathbf{\alpha}\|_1=n-m} \prod_{j=0}^d \binom{k_j}{\alpha_j}(\bc^\top \bv_j)^{k_j-\alpha_j}\\
=~& d! \vol(J) \sum_{m=0}^n b^{n-m} \sum_{\bk\in\Z_{\ge0}^{d+1},\atop \|\bk\|_1=m} \left[\sum_{\mathbf{\alpha}\in \Z_{\ge 0 }^{d+1},~\atop \|\mathbf{\alpha}\|_1=n-m} \prod_{j=0}^d \binom{k_j+\alpha_j}{\alpha_j}\right]\prod_{j=0}^{d}(\bc^\top \bv_j)^{k_j}.
\end{align*}
It can be shown to be the same as the right-hand side by the well-known generalized Vandermonde identity: 
%
$$
\sum_{\mathbf{\alpha}\in \Z_{\ge 0 }^{d+1},~\|\mathbf{\alpha}\|_1=n-m} \prod_{j=0}^d \binom{k_j+\alpha_j}{\alpha_j} = \binom{\|\bk\|_1+n-m+d}{n-m}.
$$

A short proof of this identity
is by double counting the coefficient of $t^{n-m}$ on both sides of $\prod_{j=0}^d (1+t)^{-(k_j+1)} = (1+t)^{-(\|\bk\|_1+d+1)}$.

Therefore, we can verify that \eqref{eqn:simplex_affine} holds.
%

%
Then by multiplying \eqref{eqn:simplex_affine} by $t^n$ and summing from 0 to $\infty$, we have
\begin{align*}
&\sum_{n=0}^{\infty} \left[\frac{(n+d)!}{n!}\int_{J} (\bc^\top \bx+b)^n d\bx \right]t^n \\
= & d! \vol(J)\sum_{n=0}^{\infty} t^n\sum_{\bk\in\Z_{\ge0}^{d+1},\atop \|\bk\|_1=n} (\bc^\top \bv_0+b)^{k_0}\dots(\bc^\top \bv_d+b)^{k_d}\\
= & d! \vol(J)\prod_{j=0}^d\sum_{k_j=0}^{\infty} (\bc^\top \bv_j + b)^{k_j} t^{k_j}\\
= & d!\vol(J)\frac{1}{\prod_{j=0}^{d}(1-t(\bc^\top \bv_j+b))}.
\end{align*}
Therefore, the result holds.
\qed \end{proof}
 Using the same method as \eqref{eqn:linear_short}, we obtain the following corollary.
\begin{cor}
Suppose that $J:=\conv\{\bv_0,\bv_1,\dots,\bv_d\}\subset \R^d$. Suppose further that $\bc^\top \bv_j + b\ne \bc^\top \bv_k + b$.
Then we have
\begin{align*}
    \int_{J}(\bc^\top \bx+b)^n d\bx &= d! \vol(J)\frac{n!}{(n+d)!} \sum_{j=0}^d \frac{(\bc^\top \bv_j+b)^{n+d}}{\prod\limits_{k \,:\, k\neq j} \bc^\top(\bv_j-\bv_k)}.
\end{align*}
\end{cor}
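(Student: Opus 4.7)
The plan is to mimic exactly the proof strategy used to derive the unshifted short formula \eqref{eqn:linear_short}, but now taking the generating function provided by Theorem \ref{eqn:taylor_series_2} as the starting point. The hypothesis ``$\bc^\top \bv_j + b \neq \bc^\top \bv_k + b$'' (for $j \neq k$) is precisely the non-coincidence of poles needed to do a clean partial fraction decomposition.

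First, I would decompose the left-hand side of the identity in Theorem \ref{eqn:taylor_series_2} as
\[
\frac{1}{\prod_{j=0}^{d}\bigl(1-t(\bc^\top \bv_j+b)\bigr)} = \sum_{j=0}^d \frac{1}{1-t(\bc^\top \bv_j+b)} \cdot \frac{(\bc^\top \bv_j+b)^d}{\prod\limits_{k \,:\, k\neq j} \bigl((\bc^\top \bv_j+b)-(\bc^\top \bv_k+b)\bigr)}.
\]
The crucial (trivial but essential) simplification is that the shift $b$ cancels in each factor of the denominator: $(\bc^\top \bv_j+b)-(\bc^\top \bv_k+b) = \bc^\top(\bv_j-\bv_k)$. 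This keeps the denominator structure identical to the unshifted case.

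Next, I would expand each summand as a geometric series, $\frac{1}{1-t(\bc^\top \bv_j+b)} = \sum_{n\ge 0} t^n (\bc^\top \bv_j+b)^n$, multiply through by $d!\vol(J)$, and equate the coefficient of $t^n$ on both sides of the identity from Theorem \ref{eqn:taylor_series_2}. Solving for $\int_J (\bc^\top \bx + b)^n\, d\bx$ yields the claimed formula.

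There is essentially no obstacle here: the heavy lifting has already been done in establishing Theorem \ref{eqn:taylor_series_2} (via the Vandermonde-type identity and the $\mathrm{e}^{\bc^\top \bx + b}$ expansion), and the partial fraction trick is identical to the one used for \eqref{eqn:linear_short}. The only point requiring the slightest care is to verify that the $b$ really does drop out of the pairwise differences in the denominator, which is immediate. One could alternatively give an even shorter proof by the affine change of variables $\bc \mapsto \bc$, observing that integrating $(\bc^\top \bx + b)^n$ is the same as integrating $((\bc,b)^\top(\bx,1))^n$ and invoking \eqref{eqn:linear_short} on a $(d+1)$-dimensional lift, but the partial fraction route keeps the exposition parallel to the earlier derivation and is probably preferable.
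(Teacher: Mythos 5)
Your proof is correct and is exactly the argument the paper intends: the corollary is stated immediately after Theorem \ref{eqn:taylor_series_2} precisely so that the partial-fraction decomposition used for \eqref{eqn:linear_short} can be reused verbatim, with the key observation that the shift $b$ cancels in the pairwise differences $(\bc^\top\bv_j+b)-(\bc^\top\bv_k+b)=\bc^\top(\bv_j-\bv_k)$. Nothing is missing.
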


We can extend Theorem \ref{eqn:taylor_series_2} to calculate the integration of a product of powers of $D$ affine forms.
By replacing $t\bc$ by $\sum_{j=1}^D t_j \bc_j$ and $tb$ by $\sum_{j=1}^D t_j b_j$ in
Theorem \ref{eqn:taylor_series_2}
and taking the expansion in powers $t_1^{\alpha_1}\dots t_D^{\alpha_D}$, we obtain:

\begin{cor}\label{cor:duality_D}
Suppose that $J:=\conv\{\bv_0,\bv_1,\dots,\bv_d\}\subset \R^d$, we have
\begin{align*}
&    d!\vol(J)\frac{1}{\prod_{j=0}^{d}(1-t_1(\bc_1^\top \bv_j+b_1)-\dots-t_D(\bc_D^\top \bv_j+b_D))}
\notag\\
&\quad =\sum_{\mathbf{\alpha}\in\Z_{\ge0}^D} \left[\frac{(\|\mathbf{\alpha}\|_1+d)!}{\alpha_1!\dots\alpha_D!}\int_{J} (\bc_1^\top \bx+b_1)^{\alpha_1}\dots (\bc_D^\top \bx+b_D)^{\alpha_D} d\bx \right]t_1^{\alpha_1}\dots t_D^{\alpha_D} .
\end{align*}
\end{cor}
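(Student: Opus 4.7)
The plan is to derive the identity by a direct substitution into Theorem \ref{eqn:taylor_series_2} followed by a multinomial expansion; the sentence preceding the corollary hints at exactly this route. First I would rewrite Theorem \ref{eqn:taylor_series_2} in a form that is symmetric in $t$ and $(\bc,b)$ by absorbing $t$ into the coefficients: since $t(\bc^\top \bv_j+b)=(t\bc)^\top \bv_j+(tb)$ and $t^n(\bc^\top \bx+b)^n=((t\bc)^\top \bx+(tb))^n$, Theorem \ref{eqn:taylor_series_2} is equivalent to the scale-free identity
$$
\frac{d!\vol(J)}{\prod_{j=0}^{d}\bigl(1-(\bc'^\top \bv_j+b')\bigr)}
\;=\;
\sum_{n=0}^{\infty} \frac{(n+d)!}{n!}\int_{J}(\bc'^\top \bx+b')^n\,d\bx ,
$$
holding either as a formal power series in $(\bc',b')$ or as an analytic identity for $(\bc',b')$ near the origin.

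Next, I would substitute $\bc':=\sum_{k=1}^D t_k\bc_k$ and $b':=\sum_{k=1}^D t_k b_k$. On the left, each factor $1-(\bc'^\top \bv_j+b')$ becomes $1-\sum_{k=1}^D t_k(\bc_k^\top \bv_j+b_k)$, so the LHS is exactly the LHS of the corollary. On the right, the integrand is
$$
\Bigl(\sum_{k=1}^D t_k(\bc_k^\top \bx+b_k)\Bigr)^n,
$$
which, by the multinomial theorem, expands as
$\sum_{\|\mathbf{\alpha}\|_1=n}\frac{n!}{\alpha_1!\cdots\alpha_D!}\,t_1^{\alpha_1}\cdots t_D^{\alpha_D}\prod_{k=1}^D(\bc_k^\top \bx+b_k)^{\alpha_k}$. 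Pulling the finite sum outside the integral and collapsing the double sum $\sum_{n\ge 0}\sum_{\|\mathbf{\alpha}\|_1=n}$ into a single sum $\sum_{\mathbf{\alpha}\in\Z_{\ge0}^D}$, the combinatorial factor telescopes as
$$
\frac{(n+d)!}{n!}\cdot\frac{n!}{\alpha_1!\cdots\alpha_D!}
=\frac{(\|\mathbf{\alpha}\|_1+d)!}{\alpha_1!\cdots\alpha_D!},
$$
yielding exactly the RHS of the corollary.

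The only subtlety lies in justifying the substitution. In the formal-power-series sense in the $D$ variables $t_1,\dots,t_D$ this is automatic, since both sides of the scale-free identity are well-defined elements of $\R[\bc',b'][[{\rm power\ series}]]$ and the substitution is a ring homomorphism. Alternatively, one can work analytically, noting that Theorem \ref{eqn:taylor_series_2} holds in a neighborhood of $(\bc',b')=(\mathbf{0},0)$ and hence the substituted identity holds for $(t_1,\dots,t_D)$ near $\mathbf{0}$, after which coefficient extraction in the $t_k$'s gives the stated formula. Beyond this bookkeeping, no real obstacle appears; the argument is a clean repackaging of Theorem \ref{eqn:taylor_series_2} via the multinomial theorem.
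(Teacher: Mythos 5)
Your proposal is correct and follows essentially the same route as the paper, which obtains the corollary precisely by replacing $t\bc$ with $\sum_{k=1}^D t_k\bc_k$ and $tb$ with $\sum_{k=1}^D t_k b_k$ in Theorem \ref{eqn:taylor_series_2} and extracting the coefficient of $t_1^{\alpha_1}\cdots t_D^{\alpha_D}$. Your explicit multinomial bookkeeping and the remark on justifying the substitution as a formal-power-series (or local analytic) identity simply spell out details the paper leaves implicit.
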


Next, we review the three algorithms implemented in \cite{baldoni_how_2010}.

The first algorithm is called the \emph{Taylor-expansion method}.
By Corollary \ref{cor:duality_D}, letting $D:=d$ and $\bc_j := \be_j$, $b_j:=0$, for $j=1,\dots,d$, we can compute the integral 
$\int_{J}x_1^{\alpha_1}\dots x_d^{\alpha_d}d\bx$
of a monomial  by multiplying $\frac{d!\vol(J)\alpha_1!\dots \alpha_d!}{(\|\mathbf{\alpha}\|_1+d)!}$ by the coefficient of $t_1^{\alpha_1}\dots t_d^{\alpha_d}$ in the Taylor expansion of 
$$
\frac{1}{\prod_{j=0}^{d}(1-\bt^\top \bv_j)}.
$$
Using this method, we can integrate a polynomial with fixed degree $q$ in polynomial time (see \cite[Proof of Cor. 3]{baldoni_how_2010}).

The second algorithm is called the  \emph{linear-form decomposition method}. We first decompose arbitrary monomial as sums of powers of linear forms 
\begin{align*}
    &x_1^{\alpha_1}x_2^{\alpha_2}\dots x_d^{\alpha_d} = \\
    &\qquad \frac{1}{(\|\mathbf{\alpha}\|_1)!}\sum_{0\le \beta_i\le \alpha_i} (-1)^{\|\mathbf{\alpha}\|_1-\|\mathbf{\beta}\|_1}\binom{\alpha_1}{\beta_1}\dots\binom{\alpha_d}{\beta_d}(\beta_1x_1+\dots+\beta_dx_d)^{\|\mathbf{\alpha}\|_1},\nonumber 
\end{align*}
and then we use Corollary \ref{cor:residue} to compute the integral of each linear form. This is also a polynomial-time algorithm for fixed degree $q$ (See \cite[Alternative proof of Cor. 3]{baldoni_how_2010}).

The third algorithm is called the \emph{iterated-Laurent method}. The equation \eqref{eqn:exp_short} can also be viewed as an equation with respect to the variables $\mathbf{c}$.
We know that $\int_{J} x_1^{\alpha_1}\dots x_d^{\alpha_d}d\bx$ is the coefficient of $\frac{c_1^{\alpha_1}\dots c_d^{\alpha_d}}{\alpha_1!\dots\alpha_d!}$ in the Taylor expansion of the left-hand side of \eqref{eqn:exp_short} $\int_{J} \mathrm{e}^{\bc^\top \bx}d\bx$. 
By expanding the right-hand side of \eqref{eqn:exp_short} into an iterated Laurent series with respect to the variables $c_1,\dots,c_d$, we can compute the integral by comparing the coefficient of $\frac{c_1^{\alpha_1}\dots c_d^{\alpha_d}}{\alpha_1!\dots\alpha_d!}$ (see \cite[Rem. 15]{baldoni_how_2010}).

From the numerical experiment in \cite{baldoni_how_2010}, we know that for low dimensions $(d\le 5)$, the iterated-Laurent method is faster than the two other methods; for high dimensions, the linear-form decomposition method is faster than the other two methods.

\subsection{Symmetric multilinear form}\label{subsec:sml}
A (multivariate) polynomial $f(\bx)$ is \emph{$q$-homogeneous} if $f(\lambda \bx) = \lambda^q f(\bx)$.
Besides representing a polynomial as a sum of monomials, we may also write a polynomial as a sum of homogeneous polynomials.
\cite{lasserre_multi-dimensional_2001} provides a nice formula for the integration of a $q$-homogeneous ($q$ is a positive integer)
polynomial on a simplex by associating with the symmetric multilinear form.
\begin{lemma}\label{lem:polarization}
For a $q$-homogeneous polynomial $f(x): \R^d \rightarrow \R$, there exists a symmetric multilinear form $H_f: (\R^d)^q \rightarrow \R$ by the polarization formula
\begin{equation*}
H_f(\bx_1,\bx_2,\dots,\bx_q) = \frac{1}{2^q q!}\sum_{\mathbf{\epsilon}\in\{\pm1\}^q} \epsilon_1\epsilon_2\dots\epsilon_q f(\sum_{j=1}^q\epsilon_j \bx_j),
\end{equation*}
such that $H_f(\bx,\bx,\dots,\bx) = f(\bx)$. 
\end{lemma}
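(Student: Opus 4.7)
\medskip
\noindent\textbf{Proof proposal.} The plan is to reduce the claim to the case where $f$ is a product of linear forms. Every $q$-homogeneous polynomial $f:\R^d\to\R$ is a linear combination of degree-$q$ monomials, and each monomial $x_{j_1}x_{j_2}\cdots x_{j_q}$ has the form $\prod_{i=1}^q \ell_i(\bx)$ for linear functionals $\ell_i(\bx):=\be_{j_i}^\top \bx$. Since the right-hand side of the polarization formula is linear in $f$, it suffices to verify symmetry, multilinearity, and the identity $H_f(\bx,\ldots,\bx)=f(\bx)$ when $f(\bx)=\prod_{i=1}^q \ell_i(\bx)$ for arbitrary linear functionals $\ell_1,\ldots,\ell_q$.

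For such $f$, expanding the product yields
\[
f\!\left(\sum_{j=1}^q \epsilon_j \bx_j\right)=\prod_{i=1}^q\sum_{j=1}^q \epsilon_j\,\ell_i(\bx_j)=\sum_{\pi:[q]\to[q]}\prod_{i=1}^q \epsilon_{\pi(i)}\,\ell_i(\bx_{\pi(i)}),
\]
where the outer sum is over all functions $\pi$. Multiplying by $\epsilon_1\cdots\epsilon_q$ and summing over $\epsilon\in\{\pm 1\}^q$, the sign weight collapses to $\prod_{j=1}^q \epsilon_j^{\,1+|\pi^{-1}(j)|}$, whose sum over $\epsilon$ equals $2^q$ if every exponent $1+|\pi^{-1}(j)|$ is even, and $0$ otherwise. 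Requiring $|\pi^{-1}(j)|$ to be odd for every $j$, together with the constraint $\sum_j |\pi^{-1}(j)|=q$, forces $|\pi^{-1}(j)|=1$ for all $j$; hence only the $q!$ permutations $\pi\in S_q$ contribute.

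Combining these observations gives
\[
H_f(\bx_1,\ldots,\bx_q)=\frac{1}{q!}\sum_{\pi\in S_q}\prod_{i=1}^q \ell_i(\bx_{\pi(i)}).
\]
This expression is manifestly symmetric in $(\bx_1,\ldots,\bx_q)$ (relabeling $\pi$ by $\pi\circ\sigma$ for any $\sigma\in S_q$ reshuffles the summands), and manifestly multilinear (each $\bx_j$ enters each summand through a single linear factor $\ell_i(\bx_{\pi(i)})$). Finally, setting $\bx_1=\cdots=\bx_q=\bx$ gives $H_f(\bx,\ldots,\bx)=\tfrac{1}{q!}\cdot q!\cdot\prod_i \ell_i(\bx)=f(\bx)$, completing the verification.

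The only real content beyond bookkeeping is the combinatorial observation that $q$ positive odd integers summing to $q$ must each equal $1$, which singles out the permutations $\pi\in S_q$ in the sign-weighted sum; I anticipate this to be the main (and only mild) obstacle. The reduction from general $q$-homogeneous polynomials to products of linear forms is justified by the linearity of the polarization formula in $f$, so no additional genericity assumption is required.
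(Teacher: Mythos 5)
Your proof is correct, but it takes a genuinely different route from the paper. The paper's proof establishes symmetry directly from the definition and then verifies the diagonal identity $H_f(\bx,\dots,\bx)=f(\bx)$ by pulling $f(\bx)$ out via homogeneity and invoking the finite-difference identity $\sum_{k=0}^q(-1)^k\binom{q}{k}(q-2k)^q=2^q\,q!$; notably, it never explicitly verifies multilinearity. You instead reduce, by linearity of the polarization formula in $f$, to the case $f=\prod_{i=1}^q\ell_i$ of a product of linear forms, and show via the sign-weighted sum over $\epsilon\in\{\pm1\}^q$ (using that $q$ positive odd fiber sizes summing to $q$ must all equal $1$) that $H_f(\bx_1,\dots,\bx_q)=\frac{1}{q!}\sum_{\pi\in S_q}\prod_{i=1}^q\ell_i(\bx_{\pi(i)})$. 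This closed form makes symmetry, multilinearity, \emph{and} the diagonal identity all manifest at once, so your argument actually covers the multilinearity claim that the paper's proof leaves implicit; the trade-off is that the paper's computation is shorter and works with a general homogeneous $f$ without passing through a monomial decomposition. Both are valid; your version is the more self-contained proof of the full statement as written.
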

\begin{proof}
The symmetry follows from the definition of $H_f$ because any permutation between the $x_j$'s would result in the same $H_f$. 
Given $q$-homogeneous $f(x)$, we can
easily check that $H_f(\bx,\bx,\dots,\bx) = f(\bx)$: 
\begin{align*}
    H_f(\bx,\bx,\dots,\bx) & = \frac{1}{2^q q!}\sum_{\mathbf{\epsilon}\in\{\pm1\}^q} \epsilon_1\epsilon_2\dots\epsilon_q f(\sum_{j=1}^q\epsilon_j \bx)\\
    & = f(\bx)\frac{1}{2^q q!}\sum_{\mathbf{\epsilon}\in\{\pm1\}^q} \epsilon_1\epsilon_2\dots\epsilon_q (\sum_{j=1}^q\epsilon_j)^q\\
    & = f(\bx)\frac{1}{2^q q!}\sum_{k=0}^{q} (-1)^k \binom{q}{k}(q-2k)^q\\
    & = f(\bx)\frac{1}{2^q q!}\sum_{j=0}^q\binom{q}{j}q^{q-j} (-2)^j\sum_{k=0}^{q} (-1)^k \binom{q}{k}k^j\\
    & = f(\bx).
\end{align*}
The last equation follows from a well-known identity (see \cite{ruiz2004algebraic}, for example):
\vbox{
$$
\sum_{k=0}^{q} (-1)^k \binom{q}{k} k^j =\left\{
\begin{array}{ll}
    0, &\quad 0\le j\le q-1; \\
    q!(-1)^q, &\quad  j=q.
\end{array}
\right.
$$
\qed
}\end{proof}

\begin{theorem}[{\cite[Thm. 2.1]{lasserre_multi-dimensional_2001}}]\label{thm:qhomo}
    Suppose that $J:=\conv\{\bv_0,\bv_1,\dots,\bv_d\}\allowbreak\subset \R^d$. Suppose that $H:(\R^d)^q \rightarrow \R$ is a symmetric multilinear form.
    Then we have
    \begin{equation}\label{eqn:symlinear}
    \int_{J} H(\bx,\bx,\dots,\bx) d\bx = \frac{\vol(J)}{\binom{q+d}{q}}\sum_{0\le i_1\le\dots\le i_q\le d} H(\bv_{i_1},\bv_{i_2},\dots,\bv_{i_q}).
    \end{equation}
    Suppose that $f:\R^d \rightarrow \R$ is a $q$-homogeneous polynomial ($q\in\mathbb{Z}_{\ge 1}$), then 
    \begin{equation}\label{eqn:qhomo}
    \int_{J} f(\bx)d\bx = \frac{\vol(J)}{2^q q!\binom{q+d}{q}}\sum_{0\le i_1\le\dots\le i_q\le d~} \sum_{\mathbf{\epsilon}\in\{\pm1\}^q} \epsilon_1\dots\epsilon_q f\left(\sum_{j=1}^q\epsilon_j \bv_{i_j}\right).
    \end{equation}
\end{theorem}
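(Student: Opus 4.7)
The plan is to establish \eqref{eqn:symlinear} first and then obtain \eqref{eqn:qhomo} as a direct corollary: by Lemma \ref{lem:polarization}, $f(\bx) = H_f(\bx,\dots,\bx)$ for the symmetric multilinear polarization $H_f$; applying \eqref{eqn:symlinear} with $H:=H_f$ and expanding each $H_f(\bv_{i_1},\dots,\bv_{i_q})$ via the polarization formula reproduces the right-hand side of \eqref{eqn:qhomo}.

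For \eqref{eqn:symlinear}, first reduce to the standard simplex using the affine change of variables $\bx = \bv_0 + \bB\bt$, $\bt\in\Delta_d$, from the Notation section, which contributes a Jacobian $|\det\bB| = d!\vol(J)$. Introducing the barycentric coordinate $t_0 := 1 - t_1 - \cdots - t_d$ so that $\bx = \sum_{j=0}^{d} t_j \bv_j$, multilinearity of $H$ yields
\[
H(\bx,\dots,\bx) \;=\; \sum_{(j_1,\dots,j_q)\in\{0,\dots,d\}^q} t_{j_1}\cdots t_{j_q}\, H(\bv_{j_1},\dots,\bv_{j_q}).
\]

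Next, group the ordered $q$-tuples by their underlying multiset, indexed by exponent vectors $\bk=(k_0,\dots,k_d)\in\Z_{\geq 0}^{d+1}$ with $\|\bk\|_1 = q$. Symmetry of $H$ means every tuple with exponent vector $\bk$ contributes the same value, call it $H(\bv_0^{k_0},\dots,\bv_d^{k_d})$, and the number of such ordered tuples is the multinomial coefficient $q!/(k_0!\cdots k_d!)$. Applying Proposition \ref{prop:mono_standard_simplex} (with $\alpha_j := k_j$ for $j=1,\dots,d$ and $\alpha_{d+1} := k_0$ to absorb the factor $t_0^{k_0} = (1 - \sum t_j)^{k_0}$) gives
\[
\int_{\Delta_d} t_0^{k_0}\cdots t_d^{k_d}\,d\bt \;=\; \frac{k_0!\cdots k_d!}{(q+d)!}.
\]
Multiplying the three factors $d!\vol(J)$, $q!/(k_0!\cdots k_d!)$, and $k_0!\cdots k_d!/(q+d)!$ collapses to the $\bk$-independent constant $d!\,q!/(q+d)! = 1/\binom{q+d}{q}$. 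Finally, weakly-increasing sequences $0\le i_1\le\cdots\le i_q\le d$ are in natural bijection with exponent vectors $\bk$ with $\|\bk\|_1=q$ (via $k_j := |\{\ell : i_\ell = j\}|$), completing \eqref{eqn:symlinear}.

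The argument is a direct application of previously stated results once the correct combinatorial bookkeeping is set up, so the only real obstacle is cleanly passing between ordered tuples, multisets, and weakly-increasing sequences; the analytic content is already carried by the Jacobian of the affine reduction and the monomial integration formula on $\Delta_d$.
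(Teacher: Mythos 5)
Your proposal is correct and follows essentially the same route as the paper's proof: affine reduction to $\Delta_d$ with Jacobian $d!\vol(J)$, expansion of $H$ by multilinearity in barycentric coordinates, grouping by multinomial coefficients, and integration of each monomial via Proposition \ref{prop:mono_standard_simplex}, with \eqref{eqn:qhomo} then following from the polarization identity of Lemma \ref{lem:polarization}. The combinatorial bookkeeping (ordered tuples versus multisets versus weakly increasing index sequences) is handled correctly and the constants collapse to $1/\binom{q+d}{q}$ exactly as in the paper.
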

\begin{proof}
For \eqref{eqn:symlinear}, we make an affine bijection between $J$ and the standard 
$d$-simplex $\Delta_d$ via \eqref{eqn:affine_transformation_B}, 
$$
\bx\in J \quad \Leftrightarrow \quad \bt= \bB^{-1}(\bx-\bv_0) \in \Delta_d.
$$
Then the integration becomes
\begin{align*}
    &\int_{J} H(\bx,\dots,\bx) d\bx =|\det(\bB)|\int_{\Delta_d} H(\bB\bt + \bv_0,\dots,\bB\bt + \bv_0) d\bt \\
    =&~ d!\vol(J) \int_{\Delta_d} H\left(\sum_{j=1}^d t_j \bv_j  + (1-\sum_{j=1}^d t_j) \bv_0,\dots, \sum_{j=1}^d t_j \bv_j  + (1-\sum_{j=1}^d t_j) \bv_0\right) d\bt\\
    =&~ d!\vol(J) \!\!\!\sum_{\substack{\mathbf{\alpha}\in \Z_{\ge0}^{d+1}\\\|\mathbf{\alpha}\|_1= q}} \frac{q!}{\alpha_0!\dots\alpha_d!}H(\bv_0^{\alpha_0},\dots,\bv_d^{\alpha_d})\!\! \int_{\Delta_d}\!\! t_1^{\alpha_1}\dots t_d^{\alpha_d} (1-t_1-\dots - t_d)^{\alpha_0} d\bt
\end{align*}
where the last equality follows from the multilinearity and symmetry of $H$, and 
$$
H(\bv_0^{\alpha_0},\dots,\bv_d^{\alpha_d}):=H(\underbrace{\bv_0,\dots,\bv_0}_{\alpha_0~\text{times}},\dots, \underbrace{\bv_d,\dots,\bv_d}_{\alpha_d~\text{times}}).
$$ 
Therefore, by Proposition \ref{prop:mono_standard_simplex}, we get \eqref{eqn:symlinear}.
For \eqref{eqn:qhomo}, we can use Lemma \ref{lem:polarization} to construct the associated $H_f$. Then it follows from \eqref{eqn:symlinear} directly.
\qed \end{proof}

By Theorem \ref{thm:qhomo}, we can integrate a polynomial with fixed degree $q$ in polynomial time.\\

\subsection{Fourier transformation}\label{subsec:fourier}
 There is a  Fourier-transformation method in \cite{barvinok1991computation,barvinok1994computation} for integration of a class of exponential functions over a polytope in  standard form. 
 We can recover \eqref{eqn:exp_short} via this method.
\begin{theorem}[\cite{barvinok1991computation,barvinok1994computation}]\label{thm:fourier}
Let $P:=\{\bx\in\R^n_{\geq 0} ~:~ \bA\bx=\bb\}$, where 
 $\bA\in\R^{m\times n}$ has $\rank \bA=m<n$, $\bb\in\R^m$, and  
 suppose that $\dim P = n-m$.
That is, $P$ is full dimensional in the $(n-m)$-dimensional hyperplane $\{\bx\in\R^n ~:~ \bA\bx=\bb\}$. 
Then, for all  $\bc\in \R^n_{>0} $~, we have
$$
\int_{P} \mathrm{e}^{-\bc^\top \bx} d\bx = \sqrt{\det(\bA\bA^\top)} \int_{\R^m} \mathrm{e}^{2\pi i \bb^\top \by} \prod_{j=1}^n \frac{1}{2\pi i \ba_j^\top \by + \bc_j} d\by,
$$
where $\ba_1,\dots,\ba_n$ are the columns of $\bA$.
\end{theorem}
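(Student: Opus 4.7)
The plan is to prove the identity by representing the $(n-m)$-dimensional Lebesgue measure on the affine slice $P$ as a Dirac delta, rewriting that delta via its Fourier representation, and then evaluating the resulting inner integral explicitly using the positivity hypothesis on $\bc$.

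First I would establish the measure-theoretic ingredient. The coarea formula (or a direct orthogonal change of variables on $\R^n$) gives, for any integrable $g$,
\[
\int_{\R^n} g(\bx)\,\delta(\bA\bx-\bb)\,d\bx \;=\; \frac{1}{\sqrt{\det(\bA\bA^\top)}}\int_{P}g(\bx)\,d\bx,
\]
where $d\bx$ on the left is the full $n$-dimensional Lebesgue measure and $d\bx$ on the right is the induced $(n-m)$-dimensional Lebesgue measure on $P$; the Jacobian factor $\sqrt{\det(\bA\bA^\top)}$ comes from the Gram determinant of the rows of $\bA$. I would apply this with $g(\bx)=\mathrm{e}^{-\bc^\top \bx}\mathbf{1}_{\R^n_{\geq 0}}(\bx)$.

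Next I would substitute the standard Fourier representation $\delta(\bA\bx-\bb)=\int_{\R^m}\mathrm{e}^{2\pi i \by^\top(\bA\bx-\bb)}\,d\by$ and interchange the order of integration. The inner integral over $\R^n_{\geq 0}$ factors across coordinates, and because each $c_j>0$ each univariate piece converges absolutely:
\[
\int_0^\infty \mathrm{e}^{(-c_j+2\pi i \ba_j^\top \by)x_j}\,dx_j \;=\; \frac{1}{c_j-2\pi i \ba_j^\top \by}.
\]
Taking the product over $j=1,\dots,n$ and pulling out the $\by$-independent factor $\mathrm{e}^{-2\pi i \bb^\top \by}$ produces
\[
\int_{P}\mathrm{e}^{-\bc^\top \bx}d\bx \;=\; \sqrt{\det(\bA\bA^\top)}\int_{\R^m} \mathrm{e}^{-2\pi i \bb^\top \by}\prod_{j=1}^n \frac{1}{c_j-2\pi i \ba_j^\top \by}\,d\by.
\]
Finally, the change of variable $\by\mapsto -\by$ on $\R^m$ (which has unit Jacobian) flips both signs simultaneously and yields the stated formula with $\mathrm{e}^{2\pi i \bb^\top \by}$ and denominator $2\pi i \ba_j^\top \by + c_j$.

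The main obstacle is justifying the interchange of integrations rigorously: the Fourier representation of the delta is only distributional, and the integrand $\prod_j (c_j-2\pi i \ba_j^\top \by)^{-1}$ decays like $\|\by\|^{-n}$, which is absolutely integrable on $\R^m$ precisely when $n>m$ (exactly the hypothesis $\dim P=n-m>0$). The cleanest remedy is to regularize, for instance by inserting a convergence factor $\mathrm{e}^{-\varepsilon\|\bx\|^2}$ (or $\mathrm{e}^{-\varepsilon\|\by\|^2}$) to make every step a genuine absolutely convergent double integral, apply Fubini, carry out the one-dimensional integrations, and then pass $\varepsilon\downarrow 0$ using dominated convergence; the decay rate from $\bc>0$ and the genericity $\rank \bA=m$ provide the needed uniform domination. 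This is the Barvinok argument from \cite{barvinok1991computation,barvinok1994computation}, and the remaining manipulations are the routine calculations sketched above.
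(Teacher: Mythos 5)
Your argument is correct and is essentially the paper's Fourier argument with the key reduction packaged differently. The paper translates the problem to the null space $L$ of $\bA$, invokes the subspace identity $\int_L f(\bx)\,d\bx=\int_{L^\perp}\hat f(\by)\,d\by$ as a cited black box, and obtains the factor $\sqrt{\det(\bA\bA^\top)}$ from the parametrization $\by=\bA^\top\bz$ of $L^{\perp}$; you instead represent the induced measure on the fiber $\{\bA\bx=\bb\}$ via the coarea formula as $\sqrt{\det(\bA\bA^\top)}\,\delta(\bA\bx-\bb)\,d\bx$, expand $\delta(\bA\bx-\bb)=\int_{\R^m}\mathrm{e}^{2\pi i \by^\top(\bA\bx-\bb)}\,d\by$, and interchange the integrals. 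These are two renderings of the same Poisson-type identity, and the computational core --- the factorized one-dimensional integrals $\int_0^\infty \mathrm{e}^{(-c_j+2\pi i \ba_j^\top\by)x_j}\,dx_j=(c_j-2\pi i\ba_j^\top\by)^{-1}$, convergent because $c_j>0$, followed by the sign flip $\by\mapsto-\by$ --- is identical in both. Your version has the merit of making the analytic subtlety explicit and proposing a concrete regularization, whereas the paper outsources the justification to the cited subspace Fourier identity. One caution: your integrability claim is slightly optimistic, since $\prod_j|2\pi i\ba_j^\top\by+c_j|^{-1}$ decays like $\|\by\|^{-n}$ only in directions where every $\ba_j^\top\by$ is nonzero, and near directions orthogonal to several columns of $\bA$ the decay is slower; so the dominated-convergence step needs a short additional check rather than following from $n>m$ alone (the paper's proof carries the same latent issue inside the cited identity, so this is a refinement, not a gap unique to your route).
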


\begin{proof}
For $\varphi:\R^n \rightarrow \R$,
we denote the Fourier transform by $\hat{\varphi}(\xi) :=\int \mathrm{e}^{-2\pi i \xi^\top \bx} \varphi(\bx) d\bx$. 
For
$$
g(\bx) := \left\{
\begin{array}{ll}
\mathrm{e}^{-\bc^\top \bx}, &\quad \bx\in \R^n_{\ge0}~;\\
0,&\quad  \text{otherwise,}
\end{array}
\right. 
$$
we have
$$
\hat{g}(\by) = \int_{\R^n} \mathrm{e}^{-2\pi i \by^\top \bx} g(\bx)d\bx = \int_{\R^n_{\ge0}} \mathrm{e}^{-2\pi i\by^\top \bx - \bc^\top \bx}d\bx = \prod_{j=1}^n  \frac{1}{2\pi i y_j + c_j}.
$$
Choose $\bx_0\in\R^n$  such that  $\bA\bx_0 =\bb$. 
Let $f(\bx) := g(\bx+\bx_0)$. Then
$$
\hat{f}(\by) = \mathrm{e}^{2\pi i \bx_0^\top \by}\hat{g}(\by)=\mathrm{e}^{2\pi i \bx_0^\top \by}\prod_{j=1}^n  \frac{1}{2\pi i y_j + c_j}.
$$
We can see that  
$$
\mathrm{e}^{2\pi i \bx_0^\top \by}\hat{g}(\by) =\int_{L+\bx_0}g(\bx) d\bx =\int_{L} g(\bx+\bx_0) d\bx = \int_{L} f(\bx) d\bx,
$$
where $L:=\{\bx\in \R^n:~\bA\bx=\mathbf{0}\}$ denotes the null space of $\bA$.

By the formula 
$$
\int_{L} f(\bx) d\bx =\int_{L^\perp} \hat{f}(\by) d\by,
$$
where  $L^{\perp}:=\{\bA^\top \bx:\bx\in\R^m\}$ denotes the row space of $\bA$
(see \cite{becnel2007infinite,hormander_analysis_2003}), 
%
%
we obtain
\begin{align*}
\int_P \mathrm{e}^{-\bc^\top \bx} d\bx & = \int_{L^{\perp}} \hat{f}(\by) d\by\\
&=\int_{L^{\perp}} \mathrm{e}^{2\pi i \bx_0^\top\by} \prod_{j=1}^n  \frac{1}{2\pi i y_j + c_j} d\by\\
&= \sqrt{\det(\bA\bA^\top)}\int_{\R^m} \mathrm{e}^{2\pi i x_0^\top A^\top \bz} \prod_{j=1}^n  \frac{1}{2\pi i \ba_j^\top \bz + c_j} d\bz\\
&= \sqrt{\det(\bA\bA^\top)}\int_{\R^m} \mathrm{e}^{2\pi i b^\top\bz} \prod_{j=1}^n  \frac{1}{2\pi i \ba_j^\top \bz + c_j} d\bz~,
\end{align*}
where the penultimate equation follows from a change of variables $\by:=\bA^\top \bz$. 
\qed \end{proof}

We can connect the integration over $\Delta_d$ in $\R^d$ with the integration over $\Delta'_d := \{\bx=(x_0,x_1,x_2,\dots,x_d)\in\R^{d+1}_{\ge 0}: ~\sum_{j=0}^d x_j=1\}$ in $\R^{d+1}$, via
\begin{equation*}
\sqrt{d+1}\int_{\Delta_d}  f(1-\sum_{j=1}^d x_j,x_1,x_2,\dots,x_d) d\bx =\int_{\Delta'_d} f(x_0,x_1,x_2,\dots,x_d) d\bx~.
\end{equation*}
%
To obtain the above equation, we observe that the affine transformation $\varphi: \Delta_d \rightarrow \Delta'_d$ given by 
\begin{equation}\label{eqn:affine_transformation_Q}
\varphi(\bx) :=
\begin{bmatrix}
-\mathbf{1}_d^\top\\
I_d
\end{bmatrix}\bx + \begin{bmatrix}
1\\
\mathbf{0}_d
\end{bmatrix}
=:\bQ\bx+\gamma,
\end{equation}
satisfies $\bx\in \Delta_d \Leftrightarrow \varphi(\bx) \in \Delta'_d$.
Therefore, by performing the affine transformation $\varphi$, the ratio of the volumes is
$\sqrt{\det(\bQ^\top \bQ)}$ (see \cite{gover2010determinants}, for example), which here becomes
$$
\sqrt{\det(\bQ^\top \bQ)} = \sqrt{\det(\mathbf{1}_d\mathbf{1}_d^\top+I_d)} = \sqrt{d+1}.
$$

\begin{cor}[\cite{barvinok1991computation,barvinok1994computation}]\label{cor:recover}
We can recover \eqref{eqn:exp_short} from Theorem \ref{thm:fourier}.
\end{cor}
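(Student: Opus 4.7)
The plan is to convert the integral over the general simplex $J$ into one over the ``barycentric'' simplex $\Delta'_d := \{\lambda \in \R^{d+1}_{\geq 0} : \mathbf{1}^\top \lambda = 1\}$, invoke Theorem \ref{thm:fourier} there, and evaluate the resulting one-dimensional Fourier integral by residues. Setting $s_j := \bc^\top \bv_j$, composing the bijections $\Delta_d \leftrightarrow J$ and $\Delta_d \leftrightarrow \Delta'_d$ recalled in the paper yields
\[
\int_J \mathrm{e}^{\bc^\top \bx}\, d\bx \;=\; \frac{d!\,\vol(J)}{\sqrt{d+1}} \int_{\Delta'_d} \mathrm{e}^{\sum_{j=0}^d s_j \lambda_j}\, d\lambda,
\]
which reduces the problem to integrating the exponential of a linear form over $\Delta'_d$.

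Next, I would apply Theorem \ref{thm:fourier} to $\Delta'_d$ with $\bA = \mathbf{1}_{d+1}^\top \in \R^{1 \times (d+1)}$, $\bb = 1$, and cost vector $-s$. Since $\det(\bA \bA^\top) = d+1$ and every column $\ba_j$ of $\bA$ is the scalar $1$, the right-hand side of that theorem becomes $\sqrt{d+1}$ times
\[
\int_{\R}\mathrm{e}^{2\pi i y}\,\prod_{j=0}^{d}\frac{1}{2\pi i y - s_j}\, dy,
\]
and the two factors of $\sqrt{d+1}$ cancel. Substituting $u = 2\pi i y$ converts the contour into the imaginary axis and the integrand into $\mathrm{e}^u/\prod_j (u - s_j)$; this integrand decays like $|u|^{-(d+1)}$ on large arcs in the half-plane $\{\operatorname{Re}(u) < 0\}$ while $|\mathrm{e}^u|$ remains bounded there, so the contour may be closed to the left. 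Under the genericity assumption $s_j \ne s_k$ for $j \ne k$, the residue theorem then yields
\[
\sum_{k=0}^d \frac{\mathrm{e}^{s_k}}{\prod_{j\neq k}(s_k - s_j)},
\]
and multiplying by the outstanding $d!\,\vol(J)$ reproduces exactly \eqref{eqn:exp_short}.

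The main obstacle is the mismatch between the strict positivity hypothesis $\bc \in \R^n_{>0}$ in Theorem \ref{thm:fourier} and the absence of any such constraint in \eqref{eqn:exp_short}. The remedy is to establish the chain of equalities first under the temporary assumption $s_j < 0$ for all $j$, so that Theorem \ref{thm:fourier} applies directly and every pole of the $u$-integrand in fact lies in the left half-plane; the identity then extends to all generic $\bc$ by analytic continuation, since both sides of \eqref{eqn:exp_short} are holomorphic in $\bc$ on the open set where $\bc^\top \bv_j \ne \bc^\top \bv_k$ for $j \ne k$, and they coincide on a nonempty open subset thereof.
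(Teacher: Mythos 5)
Your proposal is correct and follows essentially the same route as the paper: pass from $J$ to the barycentric simplex $\Delta'_d$, apply Theorem~\ref{thm:fourier} with $\bA=\mathbf{1}^\top$, and evaluate the resulting one-dimensional integral by closing the contour and summing residues under the genericity assumption. The only difference is the final step of removing the positivity restriction of Theorem~\ref{thm:fourier}: the paper uses the explicit shift $\bc\mapsto\bc-M\mathbf{1}$ (which multiplies both sides of the $\Delta'_d$ identity by $\mathrm{e}^{-M}$ because $\mathbf{1}^\top\bx=1$ there), whereas you invoke analytic continuation --- which works, but should be carried out in the $d+1$ variables $s_j$ on the $\Delta'_d$-level identity rather than in $\bc$ on \eqref{eqn:exp_short} itself, since the set $\{\bc:\bc^\top\bv_j<0 \text{ for all } j\}$ is empty whenever $\mathbf{0}\in J$, while $\{\mathbf{s}\in\R^{d+1}: s_j<0 \text{ for all } j,\ s_j\ne s_k\}$ is always a nonempty open set on which the two holomorphic sides agree.
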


\begin{proof}
We first show that for $\Delta_d'=\{\bx\in\R^{d+1}_{\ge 0}: ~\sum_{j=0}^d x_j=1\}$ in $\R^{d+1}$, $c_j\ne c_k$ for all $j\ne k$, 
$$
\int_{\Delta_d'} \mathrm{e}^{\bc^\top \bx} d\bx = \sqrt{d+1}\sum_{j=0}^d \frac{\mathrm{e}^{c_j}}{\prod_{k:~k\ne j}(c_j-c_k)}.
$$
Letting $P:=\Delta_d'$ in Theorem \ref{thm:fourier}, for $c>0$, we obtain
\begin{align*}
\int_{\Delta_d'} \mathrm{e}^{-\bc^\top \bx} d\bx &= \sqrt{d+1}\int_{\R}\frac{\mathrm{e}^{2\pi i y}}{ \prod_{j=0}^{d}(2\pi i y + c_j)}dy~.
\end{align*}
We can use Cauchy's residue theorem to calculate the integral. Take the contour $C$ consisting of the segment $[-R,R]$ $(\pi R>\max{c_j})$ and the upper semicircle $R(\cos \theta+i\sin\theta)$ ($\theta\in[0,\pi]$). Assuming that $c_j\ne c_k$ (for $j\not=k$), we have that the function has the isolated singularities $\frac{ic_j}{2\pi}$. By the residue theorem, we have
\begin{align*}
\int_{C}\frac{\mathrm{e}^{2\pi i z}}{ \prod_{k=0}^{d}(2\pi i z + c_k)} dz &= 2\pi i \sum_{j=0}^d\mathrm{Res}\left(\frac{\mathrm{e}^{2\pi i z}}{ \prod_{k=0}^{d}(2\pi i z + c_k)},\frac{ic_j}{2\pi}\right)\\
&=\sum_{j=0}^d ~ \lim_{z\rightarrow \frac{ic_j}{2\pi}}\frac{\mathrm{e}^{2\pi i z}}{ \prod_{k=0}^{d}(2\pi i z + c_k)}(2\pi i z+c_j)\\
&=\sum_{j=0}^d \frac{\mathrm{e}^{-c_j} }{\prod_{k:k\ne j}(c_k-c_j)}
\end{align*}
In general, assume that $K\subset\{0,1,\dots,d\}$ is the index set of different singularity $\frac{ic_k}{2\pi}$,  and for $k\in K$, let $m_k:=|\{j\in\{0,1,\dots,d\}:~c_j = c_k\}|$. Then
\begin{align}
\int_{C}\frac{\mathrm{e}^{2\pi i z}}{ \prod_{k=0}^{d}(2\pi i z + c_k)} dz &= 2\pi i \sum_{k\in K}\mathrm{Res}\left(\frac{\mathrm{e}^{2\pi i z}}{ \prod_{j=0}^{d}(2\pi i z + c_j)},\frac{ic_k}{2\pi}\right)\notag\\
&= \sum_{k\in K}\mathrm{Res}\left(\frac{\mathrm{e}^{z}}{ \prod_{j=0}^{d}(z + c_j)},-c_k\right). \label{eqn:fourier_in_general}
\end{align}
Also we have
\begin{equation}\label{eqn:contour}
\int_{C}\frac{\mathrm{e}^{2\pi i z}}{ \prod_{k=0}^{d}(2\pi i z + c_k)} dz = \int_{-R}^R \frac{\mathrm{e}^{2\pi i y}}{ \prod_{k=0}^{d}(2\pi i y + c_k)}dy + \int_{|z|=R}\frac{\mathrm{e}^{2\pi i z}}{ \prod_{k=0}^{d}(2\pi i z + c_k)} dz.
\end{equation}
For $|z|=R$, we have $|\mathrm{e}^{2\pi iz}|= |\mathrm{e}^{2\pi i R\cos\theta -2\pi R\sin\theta}|=\mathrm{e}^{-2\pi R\sin\theta}\le 1$. Then we have
$$
\left|\int_{|z|=R}\frac{\mathrm{e}^{2\pi i z}}{ \prod_{k=0}^{d}(2\pi i z + c_k)} dz\right|\le \pi R\sup_{z:~|z|=
R} \left|\frac{\mathrm{e}^{2\pi i z}}{ \prod_{k=0}^{d}(2\pi i z + c_k)}\right|\le \frac{\pi R}{(\pi R)^{d+1}}=\frac{1}{(\pi R)^d}.
$$
Therefore, by taking $R\rightarrow +\infty$ in \eqref{eqn:contour}, we obtain
$$
\int_{\R}\frac{\mathrm{e}^{2\pi i y}}{ \prod_{k=0}^{d}(2\pi i y + c_k)}dy=\sum_{j=0}^d  \frac{\mathrm{e}^{-c_j} }{\prod_{k:k\ne j}(c_k-c_j)}.
$$
Thus, for $c<0$, we have
$$
\int_{\Delta_d'} \mathrm{e}^{\bc^\top \bx} d\bx = \sqrt{d+1}\sum_{j=0}^d \frac{\mathrm{e}^{c_j}}{\prod_{k:~k\ne j}(c_j-c_k)}.
$$
For general $\bc$, there exists $M>\max c_j$ such that $\bc-M\mathbf{1}<0$. Thus
$$
\int_{\Delta_d'} \mathrm{e}^{\bc^\top \bx} d\bx = \mathrm{e}^{M} \int_{\Delta_d'} \mathrm{e}^{(\bc-M\mathbf{1})^\top \bx} d\bx = \sqrt{d+1} ~~\sum_{j=0}^d \frac{\mathrm{e}^{c_j}}{\prod_{k:~k\ne j}(c_j-c_k)}.
$$
Then by affine transformation, we have
$$
\bx\in J \quad \Leftrightarrow \quad \bB^{-1}(\bx-\bv_0)\in\Delta_d \quad \Leftrightarrow \quad \bQ\bB^{-1}(\bx-\bv_0) + \gamma\in \Delta'_d,
$$
where $\bB= \left[\bv_1-\bv_0,~\dots~,\bv_d-\bv_0\right]$ from \eqref{eqn:affine_transformation_B}, $\bQ = \begin{bmatrix}I_d\\ -\mathbf{1}_d^\top\end{bmatrix}$, and $\gamma =\begin{bmatrix}\mathbf{0}_d\\1\end{bmatrix} $ from \eqref{eqn:affine_transformation_Q}.

So, for $\tilde{\bc}^\top \bv_j\ne \tilde{\bc}^\top \bv_k$ $(j\ne k)$, we have
\begin{align*}
\textstyle\int_{J} \mathrm{e}^{\tilde{\bc}^\top \bx} d\bx &\textstyle= |\det(\bB)| \int_{\Delta_d} \mathrm{e}^{\tilde{\bc}^\top (\bB\by+\bv_0)} d\by = \frac{|\det(\bB)|}{\sqrt{d+1}} \int_{\Delta'_d} \mathrm{e}^{\tilde{\bc}^\top (\bB\by+\bv_0)} d\by\\
&\textstyle=\frac{d!\vol(J)}{\sqrt{d+1}}\int_{\Delta'_d} \mathrm{e}^{\tilde{\bc}^\top (\bB\by+\bv_0\sum\limits_{j=0}^d y_j)} d\by\\
&=\textstyle\frac{d!\vol(J)}{\sqrt{d+1}}\int_{\Delta'_d}\mathrm{e}^{\sum\limits_{j=0}^d (\tilde{\bc}^\top \bv_j) y_j} d\by\\
&\textstyle=d!\vol(J)\sum_{j=0}^d \frac{\mathrm{e}^{\tilde{\bc}^\top \bv_j}}{\prod_{k:~k\ne j}(\tilde{\bc}^\top (\bv_j-\bv_k))}.
\end{align*}
\qed \end{proof}

Next, we provide a corollary that is useful for Theorem \ref{thm:naive_exp}. It follows from the same proof idea as Corollary \ref{cor:recover} to connect the integration over $\Delta_d$ with the integration over $\Delta_d'$, but the residues in \eqref{eqn:fourier_in_general} are computed differently.
We consider the integration of the exponential function $\mathrm{e}^{\tilde{\bc}^\top \bx}$ that has exactly two distinct values when evaluated at the vertices of the simplex: one for the vertex $v_0$, and another for all the other vertices.
\begin{cor}\label{cor:fourier_2_value}
Suppose that $J:=\conv\{\bv_0,\bv_1,\dots,\bv_d\}\subset \R^d$.
Suppose further that $u:= \tilde{\bc}^\top \bv_0-\tilde{\bc}^\top \bv_j$ is a nonzero constant for $j=1,\dots,d$, then
\begin{equation*}
\int_{J} \mathrm{e}^{\tilde{\bc}^\top \bx} d\bx =  d! \vol(J)\frac{\mathrm{e}^{\tilde{\bc}^\top \bv_0 -u}}{u^d}\left(\mathrm{e}^{u} -\sum_{j=0}^{d-1}\frac{u^j}{j!}\right).
\end{equation*}
\end{cor}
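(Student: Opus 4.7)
The plan is to reduce the $d$-dimensional integral to a one-dimensional integral by exploiting the hypothesis that $\bc^\top \bv_j$ takes only two distinct values among the vertices of $J$, and then recognize the resulting univariate integral as a lower incomplete gamma function.

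First, I would pull the integral back to the standard simplex via the affine bijection $\bx = \bB\bt + \bv_0$ with $\bB = [\bv_1-\bv_0,\dots,\bv_d-\bv_0]$, whose Jacobian contributes $|\det \bB| = d!\vol(J)$. The hypothesis $\bc^\top \bv_0 - \bc^\top \bv_j = u$ for $j=1,\dots,d$ says that every column of $\bc^\top \bB$ equals $-u$, so
$$
\bc^\top(\bB\bt + \bv_0) = \bc^\top \bv_0 - u\,(t_1 + \cdots + t_d),
$$
and therefore
$$
\int_{J} \mathrm{e}^{\bc^\top \bx}d\bx = d!\vol(J)\,\mathrm{e}^{\bc^\top \bv_0}\int_{\Delta_d} \mathrm{e}^{-u\,(t_1+\cdots+t_d)}d\bt.
$$

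Next, I would collapse the integral over $\Delta_d$ to a one-dimensional integral in $s := t_1+\cdots+t_d$. A standard change of variables (or Fubini applied after eliminating $t_1$) shows that for any integrable $g$,
$$
\int_{\Delta_d} g\!\left(\textstyle\sum_{j=1}^d t_j\right) d\bt = \int_0^1 g(s)\,\frac{s^{d-1}}{(d-1)!}\,ds,
$$
since the section $\{\bt\in\R^d_{\ge 0}: \sum t_j \le s\}$ has $d$-volume $s^d/d!$. Applying this with $g(s)=\mathrm{e}^{-us}$ reduces the problem to evaluating $\int_0^1 s^{d-1}\mathrm{e}^{-us}\,ds$.

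Finally, the substitution $\tau = us$ identifies this integral with $\gamma(d,u)/u^d$, where $\gamma$ is the lower incomplete gamma function; I would then invoke (or quickly verify by induction on $d$ via integration by parts) the classical closed form
$$
\gamma(d,u) = (d-1)!\left(1 - \mathrm{e}^{-u}\sum_{j=0}^{d-1}\frac{u^j}{j!}\right).
$$
Substituting back and absorbing a factor of $\mathrm{e}^{-u}$ into the bracket converts $\mathrm{e}^{\bc^\top \bv_0}\bigl(1-\mathrm{e}^{-u}\sum\bigr)$ into $\mathrm{e}^{\bc^\top \bv_0 - u}\bigl(\mathrm{e}^u-\sum\bigr)$, yielding the claimed formula. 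No step is genuinely hard; the only real care required is the bookkeeping in the last step to produce the $\mathrm{e}^{\bc^\top \bv_0 - u}$ prefactor exactly as stated. As a sanity check, one could also derive the same formula by taking the degenerate limit of Brion's short formula \eqref{eqn:exp_short}, treating $\bv_1,\dots,\bv_d$ as coalescing poles and computing the order-$d$ residue, which recovers the polynomial $\mathrm{e}^u-\sum_{j=0}^{d-1}u^j/j!$ as the expected Laurent tail.
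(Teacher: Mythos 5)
Your proof is correct, but it takes a different route from the one the paper gives for this corollary. The paper's proof stays within the Fourier-transformation framework of Section \ref{subsec:fourier}: it applies Theorem \ref{thm:fourier} to $\Delta_d'$ and evaluates the resulting contour integral via equation \eqref{eqn:fourier_in_general}, summing a simple residue at $-\tilde{c}_0$ and an order-$d$ residue at $-(\tilde{c}_0+u)$ (the latter requiring a $(d-1)$-fold derivative of $\mathrm{e}^z/(z+\tilde{c}_0)$), before transforming back to $J$. You instead pull back to $\Delta_d$, observe that the integrand depends only on $s=t_1+\cdots+t_d$, collapse to the univariate integral $\frac{1}{(d-1)!}\int_0^1 s^{d-1}\mathrm{e}^{-us}\,ds$, and finish with the closed form of the lower incomplete gamma function (equivalently, integration by parts). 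This is essentially the elementary derivation the paper itself sketches separately in Section \ref{subsec:other}: your reduction is exactly Corollary \ref{cor:exp_all_one} plus the affine change of variables, and the paper remarks there that this recovers Corollary \ref{cor:fourier_2_value}. What each approach buys: yours avoids complex analysis entirely and is self-contained at the level of freshman calculus; the paper's residue computation is deliberately kept inside the Fourier machinery to illustrate how coalescing poles (here $d$ vertices sharing the value $\bc^\top\bv_0-u$) are handled by higher-order residues, in the spirit of Corollary \ref{cor:residue} --- which is precisely the "sanity check" you mention at the end. One minor point of care: the incomplete-gamma identity is usually quoted for $u>0$, but your integration-by-parts fallback (or analytic continuation in $u$) covers all $u\neq 0$, as the statement requires.
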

\begin{proof}
We claim that for $\Delta_d'$ and $\bc$ satisfies $c_j = c_0+u$ for some $u\ne 0$ and $j=1,\dots,d$,
$$
\int_{\Delta_d'} \mathrm{e}^{-\bc^\top \bx} d\bx = \sqrt{d+1}\cdot\frac{\mathrm{e}^{-(c_0+u)}}{u^d}\left(\mathrm{e}^{u}- \sum_{j=0}^{d-1}\frac{u^j}{j!}\right).
$$

Using the same proof as for Corollary \ref{cor:recover} to obtain equation \eqref{eqn:fourier_in_general}, we obtain the above claim by computing the residues in equation \eqref{eqn:fourier_in_general}
\begin{align*}
&\frac{1}{\sqrt{d+1}}\int_{\Delta_d'} \mathrm{e}^{-{\bc}^\top \bx} d\bx 
\\
    &\textstyle \qquad= \mathrm{Res}\left(\frac{\mathrm{e}^{z}}{ \prod_{k=0}^{d}(z + {c}_k)},-{c}_0\right)+ ~\mathrm{Res}\left(\frac{\mathrm{e}^{z}}{ \prod_{k=0}^{d}(z + {c}_k)},-({c}_0+u)\right)\\
    & \qquad =\lim_{z\rightarrow -{c}_0} \textstyle \frac{\mathrm{e}^{z}}{ (z + {c}_0+u)^d} +\frac{1}{(d-1)!}~\displaystyle\lim_{z\rightarrow -({c}_0+u)}\textstyle~\frac{d^{d-1}}{dz^{d-1}}\left(\frac{\mathrm{e}^{z}}{z + {c}_0}\right)\\
    &\textstyle \qquad =\frac{\mathrm{e}^{-{c}_0} }{u^d} -\frac{\mathrm{e}^{-({c}_0+u)}}{u^d}\sum_{j=0}^{d-1}\frac{u^j}{j!} = \frac{\mathrm{e}^{-({c}_0+u)}}{u^d}\left(\mathrm{e}^{u}- \sum_{j=0}^{d-1}\frac{u^j}{j!}\right).
\end{align*}
%
%
So, for $\tilde{\bc}^\top \bv_0 - \tilde{\bc}^\top \bv_j = u$ (i.e., $\tilde{\bc}^\top \bB = -u\mathbf{1}^\top$), applying the affine transformations \eqref{eqn:affine_transformation_B} and \eqref{eqn:affine_transformation_Q}, we have
\begin{align*}
\int_{J} \mathrm{e}^{\tilde{\bc}^\top \bx} d\bx &\textstyle = |\det(\bB)| \int_{\Delta_d} \mathrm{e}^{\tilde{\bc}^\top (\bB\by+\bv_0)} d\by = \frac{|\det(\bB)|}{\sqrt{d+1}} \int_{\Delta'_d} \mathrm{e}^{ (-u\cdot(\mathbf{1}^\top \by)+ \tilde{\bc}^\top \bv_0)} d\by\\
&\textstyle=\frac{d!\vol(J)}{\sqrt{d+1}}\int_{\Delta'_d}\mathrm{e}^{-\bc^\top \by} d\by \quad~(\text{let}~\bc~\text{satisfy}~c_j -c_0=u, {c}_0=-\tilde{\bc}^\top \bv_0)\\
&=\textstyle d!\vol(J)\frac{\mathrm{e}^{\tilde{\bc}^\top \bv_0 -u}}{u^d}\left(\mathrm{e}^{u} -\sum_{j=0}^{d-1}\frac{u^j}{j!}\right).
\end{align*}
The last equation follows from the above claim.
\qed \end{proof}

\subsection{Cubature-rule formulae}\label{subsec:cubature}
In this subsection, we survey the calculation of multidimensional integrals over a simplex via approximate integration formulae, a.k.a., cubature rules (see \cite{stroud1971approximate,cools1993monomial}, \cite{grundmann1978invariant}). 
These formulae are of the form:
\begin{equation}\label{eqn:general_approx_formula}
\int_{\mathcal{R}_d} f(\bx) d\bx = \sum_{j=1}^M \lambda_j f(\bw_j) + Rf,
\end{equation}
where $\mathcal{R}_d$ is a given region in $\R^d$,
the points $\bw_j\in\mathbb{R}^d$, the coefficients $\lambda_j\in\mathbb{R}$ are given, and $Rf$ is the approximation error.
We call $\sum_{j=1}^M\lambda_j f(\bw_j)$ in \eqref{eqn:general_approx_formula} an \emph{integration formula of degree $q$}  if the approximation error $Rf = 0$ for all polynomials $f:~\R^d\rightarrow \R$ of degree at most $q$.
For the univariate case, i.e., $d=1$, the theory of approximate integration is well-established (these formulae in one-dimension are also referred to as quadrature formulae, see \cite{davis2007methods}).

For integration of polynomials of degree at most $q$, we can leverage an approximate integration formula of degree $q$ over a simplex.
Because affine transformation does not change the degree of the polynomial, we focus on integration over the standard simplex $\Delta_d$ in the following and present two formulae for general $d$.
\cite{grundmann1978invariant} gives an invariant integration formula under all affine transformations of $\Delta_d$ onto itself, i.e., under the mappings $\varphi_{\sigma}:(x_1,\dots,x_d)\rightarrow(x_{\sigma_1},\dots,x_{\sigma_d})$, where $\sigma=(\sigma_0,\sigma_1,\dots,\sigma_d)$ is a permutation of $(0,1,\dots,n)$ and $x_0=1-\sum_{j=1}^d x_j$.
They use a combinatorial identity and consider the basis
\[
\textstyle
\left\{(1-\sum_{j=1}^d x_j)^{\alpha_0}x_1^{\alpha_1}\dots x_d^{\alpha_d} :~\|\mathbf{\alpha}\|_1=q,\mathbf{\alpha}\in\Z_{\ge0}^{d+1}\right\},
\]
instead of
the standard monomial basis 
\[
\left\{x_1^{\alpha_1}\dots x_d^{\alpha_d} :~\|\mathbf{\alpha}\|_1\le q,\mathbf{\alpha}\in\Z_{\ge0}^{d}\right\}
\]
for polynomials of degree at most $q$.
%
%
%
\begin{theorem}[{\cite[Thm. 4]{grundmann1978invariant}}]\label{Grundmann_Moller}
Let $q=2s+1$, $s\in\Z_{\ge0}$. Then
\begin{align}
 &\sum_{j=0}^{s} (-1)^j 2^{-2s} \frac{(q+d-2j)^q}{j!(q+d-j)!}\sum_{\|\bk\|_1=s-j,\atop ~\bk\in\Z_{\ge0}^{d+1}} f\left(\frac{2k_1+1}{q+d-2j},\dots,\frac{2k_d+1}{q+d-2j}\right) \notag\\
 = &\int_{\Delta_d} f(\bx) d\bx - Rf \label{eqn:GM}
\end{align}
is an integration formula of degree $q$.
\end{theorem}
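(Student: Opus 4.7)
The plan is to prove exactness (i.e., $Rf = 0$ for all polynomials $f$ of degree $\le q$) by direct verification on a convenient basis, exploiting the linearity of both sides of \eqref{eqn:GM} in $f$. I would work with the ``barycentric'' basis
\[
\mathcal{B} := \left\{ f_\alpha(\bx) := x_0^{\alpha_0} x_1^{\alpha_1} \cdots x_d^{\alpha_d} \;:\; \alpha \in \mathbb{Z}_{\ge 0}^{d+1},\;\|\alpha\|_1 = q\right\},\qquad x_0 := 1 - \textstyle\sum_{i=1}^d x_i,
\]
which spans the polynomials of degree $\le q$ regarded as functions on $\Delta_d$: any lower-degree monomial can be multiplied on $\Delta_d$ by a suitable power of $x_0 + x_1 + \cdots + x_d = 1$ and expanded into $\mathcal{B}$.

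Fix $f_\alpha\in\mathcal{B}$. The right-hand side of \eqref{eqn:GM} is immediate from Proposition \ref{prop:mono_standard_simplex}:
\[
\int_{\Delta_d} f_\alpha(\bx)\,d\bx \;=\; \frac{\alpha_0!\,\alpha_1!\cdots\alpha_d!}{(q+d)!}.
\]
For the left-hand side, the key simplification is that for $\bk\in\mathbb{Z}_{\ge 0}^{d+1}$ with $\|\bk\|_1 = s-j$, we have $\sum_{i=1}^d(2k_i+1) = 2(s-j-k_0) + d$, so, using the parity condition $q = 2s+1$,
\[
1-\sum_{i=1}^d \frac{2k_i+1}{q+d-2j} \;=\; \frac{q - 2s + 2k_0}{q+d-2j} \;=\; \frac{2k_0+1}{q+d-2j}.
\]
Because $\|\alpha\|_1 = q$, $f_\alpha$ at the node factors symmetrically as $\prod_{i=0}^d(2k_i+1)^{\alpha_i}/(q+d-2j)^q$, and this denominator exactly cancels the $(q+d-2j)^q$ in the cubature weight. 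After clearing the common $1/(q+d)!$, exactness $Rf_\alpha = 0$ reduces to the combinatorial identity
\[
\sum_{j=0}^{s} (-1)^j \binom{q+d}{j}\!\!\sum_{\substack{\bk\in\mathbb{Z}_{\ge 0}^{d+1}\\ \|\bk\|_1 = s-j}} \prod_{i=0}^d (2k_i+1)^{\alpha_i} \;=\; 2^{2s}\prod_{i=0}^d \alpha_i!.
\]

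To prove this, I would pass to generating functions: the left side equals $[t^s]\,G(t)(1-t)^{q+d}$ where $G(t):=\prod_{i=0}^d g_{\alpha_i}(t)$ and $g_\alpha(t):=\sum_{k\ge 0}(2k+1)^\alpha t^k = \bigl(2t\tfrac{d}{dt}+1\bigr)^\alpha \frac{1}{1-t}$. Each $g_\alpha$ is rational of the shape $P_\alpha(t)/(1-t)^{\alpha+1}$ with $P_\alpha$ a polynomial of degree $\alpha$, so the product telescopes to $G(t)(1-t)^{q+d} = \prod_i P_{\alpha_i}(t)/(1-t)$, and the target collapses to
\[
\sum_{m=0}^{s}\,[t^m]\,\prod_{i=0}^d P_{\alpha_i}(t) \;=\; 2^{2s}\prod_{i=0}^d \alpha_i!.
\]

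Direct computation suggests that $P_\alpha$ is palindromic (e.g., $P_0 = 1$, $P_1 = 1+t$, $P_2 = 1+6t+t^2$, $P_3 = 1+23t+23t^2+t^3$) with $P_\alpha(1) = 2^\alpha\alpha!$; granting this, $\prod_i P_{\alpha_i}(t)$ is palindromic of odd degree $q = 2s+1$. Its first $s+1$ coefficients mirror its last $s+1$, so their partial sum equals exactly half of $\prod_i P_{\alpha_i}(1) = 2^q\prod\alpha_i!$, which is precisely $2^{2s}\prod\alpha_i!$. The main obstacle is establishing palindromicity of $P_\alpha$ itself, equivalently the functional equation $g_\alpha(1/t) = (-1)^{\alpha+1} t\,g_\alpha(t)$ on rational functions; this ultimately reflects the symmetry of odd integers under $k\mapsto -k-1$ and can be proved by induction on $\alpha$ using the operator $D = 2t\,d/dt + 1$, or alternatively handled (as in Grundmann--M\"oller's original derivation) by an invariance argument with respect to the symmetric-group action on the vertices of $\Delta_d$ combined with induction on $s$. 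Either way, this is the technical heart of the theorem; the preceding reductions are essentially mechanical.
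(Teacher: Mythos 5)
The paper does not prove this theorem; it is quoted from Grundmann--M\"oller with only the surrounding remarks (the choice of the barycentric basis $\{x_0^{\alpha_0}\cdots x_d^{\alpha_d}:\|\alpha\|_1=q\}$ and the invariance under vertex permutations), so there is no in-paper argument to compare against. Your proposal is a correct and essentially self-contained derivation in the spirit of the original: the reduction is sound at every step. The basis $\mathcal{B}$ does span the degree-$\le q$ polynomials (multiplying by powers of $x_0+\cdots+x_d=1$ is an identity of polynomials, and $|\mathcal{B}|=\binom{q+d}{d}$ matches the dimension); the computation $x_0=\frac{2k_0+1}{q+d-2j}$ at the nodes uses the parity $q=2s+1$ correctly; and the resulting combinatorial identity
\[
\sum_{j=0}^{s}(-1)^j\binom{q+d}{j}\sum_{\|\bk\|_1=s-j}\prod_{i=0}^d(2k_i+1)^{\alpha_i}=2^{2s}\prod_{i=0}^d\alpha_i!
\]
is exactly what exactness on $\mathcal{B}$ requires (I checked it on small cases, e.g.\ $d=1$, $s=1$, $\alpha=(2,1)$ gives $12-4=8=2^2\cdot 2!\cdot 1!$). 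The generating-function telescoping $G(t)(1-t)^{q+d}=\prod_i P_{\alpha_i}(t)/(1-t)$ is right, and the palindromy-plus-odd-degree argument correctly converts the partial coefficient sum into $\tfrac12\prod_iP_{\alpha_i}(1)=2^{q-1}\prod_i\alpha_i!=2^{2s}\prod_i\alpha_i!$. The only incomplete pieces are the asserted properties of the ``midpoint Eulerian'' polynomials $P_\alpha$, namely $\deg P_\alpha=\alpha$, palindromicity, and $P_\alpha(1)=2^\alpha\alpha!$; you flag these honestly, and they do all follow by a short induction from the recursion your operator $D=2t\,d/dt+1$ produces, $P_{\alpha+1}(t)=2t(1-t)P_\alpha'(t)+\bigl((2\alpha+1)t+1\bigr)P_\alpha(t)$, which immediately gives $P_{\alpha+1}(1)=2(\alpha+1)P_\alpha(1)$, preserves the leading coefficient $1$, and (after a one-line check) preserves the functional equation $t^{\alpha}P_\alpha(1/t)=P_\alpha(t)$. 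So the proposal is correct modulo writing out that routine induction, which is the one piece you should supply to make the proof complete.
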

This formula is invariant under $\varphi_{\sigma}$ because
\begin{align*}
&\sum_{\|\bk\|_1=s-j,\atop ~\bk\in\Z_{\ge0}^{d+1}} f\left(\frac{2k_1+1}{q+d-2j},\dots,\frac{2k_d+1}{q+d-2j}\right) = \sum_{\|\bk\|_1=s-j,\atop ~k_0\ge k_1\ge \dots\ge k_d} f\left(\left\{\frac{2\mathbf{k}+1}{q+d-2j}\right\}\right),
\end{align*}
where for any point $\by=(y_1,\dots,y_d)\in\R^d$, we define $\{(1-\sum_{j=1}^dy_j, \by)\}$ as the image of all points which are images of $y$ under the mappings $\varphi_{\sigma}$, and we denote $f(\{\by\}): = \sum_{\bw\in\{\by\}} f(\bw)$.

For example, in the case $s=0$, \eqref{eqn:GM} reduces to  the formula ``$T_d:~\mbox{1-1}$'' of  \cite[p. 307]{stroud1971approximate}:
$$\textstyle
\frac{1}{d!}f\left(\frac{1}{d+1},\dots,\frac{1}{d+1}\right) = \int_{\Delta_d} f(\bx) d\bx - Rf.
$$
In the case $s=1$, \eqref{eqn:GM} reduces to the formula ``$T_d:~\mbox{3-1}$'' of \cite[p. 308]{stroud1971approximate}:
\begin{align*}
    &\textstyle\frac{(d+3)^3}{4(d+3)!}f\left(\left\{\frac{3}{d+3},\frac{1}{d+3},\dots,\frac{1}{d+3}\right\}\right)-\frac{(d+1)^3}{4(d+2)!}f\left(\frac{1}{d+1},\dots,\frac{1}{d+1}\right)= \int_{\Delta_d} f(\bx) d\bx - Rf.
\end{align*}
%
%
In general, \eqref{eqn:GM} in Theorem \ref{Grundmann_Moller} requires the evaluation of $f$ at $\sum_{j=0}^s \binom{s-j+d}{d} = \binom{s+d+1}{s}$ points.

By a composition of approximate integration formulae in one dimension, \cite{stroud1971approximate} gives another formula, called the ``conical product formula''.
%
\begin{theorem}[{\cite[pp.  28--31]{stroud1971approximate}}]\label{Str}
There exist  $d$ many approximate integration formulae in one dimension of degree $2s+1$:
\begin{equation}\label{eqn:Gauss_Jacobi}
\int_{0}^1 (1-y_k)^{d-k} f(y_k) d y_k =\sum_{j=1}^{s+1} \lambda_{k,j} f(w_{k,j})+Rf,~\text{for}~k=1,\dots,d.
\end{equation}
Therefore, we can obtain the conical product formula of degree $2s+1$ for $\int_{\Delta_d} f(\bx) d\bx$ with the evaluation of $f$ at $(s+1)^d$  points $\bw_{j_1,j_2,\dots,j_d}=(\nu_{j_1},\nu_{j_1j_2},\dots,\linebreak[1]\nu_{j_1j_2\dots j_d})$ and the corresponding coefficients $\lambda_{j_1,j_2,\dots,j_d}=\lambda_{1,j_1}\lambda_{2,j_2}\dots\lambda_{d,j_d}$, where
\begin{align*}
\nu_{j_1j_2\dots j_k} =& (1-w_{1,j_1})(1-w_{2,j_2})\dots(1-w_{k-1,j_{k-1}}) w_{k,j_k}, \\
&\quad \mbox{ for } k=1,\dots,d,~ 1\le j_k\le s+1.
\end{align*}
\end{theorem}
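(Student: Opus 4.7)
The plan is to prove the conical product formula via a triangular change of variables (the Duffy transformation), followed by an iterated application of the one-dimensional rules. The existence of the univariate formulae in \eqref{eqn:Gauss_Jacobi} follows from classical Gauss-Jacobi quadrature theory: on $[0,1]$ the weight $(1-y)^{d-k}$ is a nonnegative integrable function, so there exist $s+1$ nodes $w_{k,j}$ and weights $\lambda_{k,j}$ making the formula exact for every univariate polynomial of degree at most $2s+1$ (see, e.g., \cite{davis2007methods}).

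Next, I would introduce the Duffy transformation $\phi:[0,1]^d \to \Delta_d$ defined by
\begin{equation*}
\phi_k(\by) = (1-y_1)(1-y_2)\cdots(1-y_{k-1})\, y_k, \quad k=1,\dots,d,
\end{equation*}
which sends the product node $(w_{1,j_1},w_{2,j_2},\dots,w_{d,j_d})$ to the point $(\nu_{j_1},\nu_{j_1 j_2},\dots,\nu_{j_1 j_2\dots j_d})$ appearing in the statement. The Jacobian matrix of $\phi$ is lower-triangular with diagonal entries $1,(1-y_1),(1-y_1)(1-y_2),\dots,\prod_{\ell<d}(1-y_\ell)$, so its absolute determinant is $\prod_{k=1}^{d-1}(1-y_k)^{d-k}$. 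Applying the change of variables yields
\begin{equation*}
\int_{\Delta_d} f(\bx)\,d\bx = \int_{[0,1]^d} f(\phi(\by))\,\prod_{k=1}^{d-1}(1-y_k)^{d-k}\,d\by,
\end{equation*}
where the missing factor $(1-y_d)^0 = 1$ can be inserted formally to match the weight of the one-dimensional rule for $k=d$. I would then apply Fubini and the one-dimensional rules iteratively, starting from $y_d$ and proceeding outward to $y_1$; the $k$-th step replaces $\int_0^1 (1-y_k)^{d-k}(\cdot)\,dy_k$ by $\sum_{j_k=1}^{s+1}\lambda_{k,j_k}(\cdot)$ evaluated at $y_k = w_{k,j_k}$. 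Assembling the nested sums produces the claimed product formula with $(s+1)^d$ nodes $\phi(w_{1,j_1},\dots,w_{d,j_d})$ and weights $\prod_{k=1}^{d}\lambda_{k,j_k}$.

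The main obstacle is verifying that each one-dimensional step is actually exact, i.e., that at every stage the integrand is a polynomial of degree at most $2s+1$ in the variable being eliminated. For this I would track, for a monomial $x_1^{a_1}\cdots x_d^{a_d}$ of total degree $q = \sum_j a_j$, the degree in each $y_k$ of its pullback under $\phi$. Since $x_j$ involves $y_k$ only when $j\ge k$ (as the factor $y_k$ when $j=k$ or $(1-y_k)$ when $j>k$), and in either case linearly, a direct check shows that $(\phi_1(\by))^{a_1}\cdots(\phi_d(\by))^{a_d}$ has degree exactly $\sum_{j\ge k} a_j$ in $y_k$, which is bounded by $q$. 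Thus whenever $f$ has total degree at most $2s+1$, every Gauss-Jacobi step is exact by the choice of the nodes and weights in Step~1; by linearity in $f$, the assembled product rule has degree of exactness $2s+1$, as claimed.
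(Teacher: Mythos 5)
Your proposal is correct and follows essentially the same route as the paper: the same triangular substitution $x_k=(1-y_1)\cdots(1-y_{k-1})y_k$ with Jacobian $\prod_{k=1}^{d-1}(1-y_k)^{d-k}$, iterated application of the one-dimensional Gauss--Jacobi rules with weights $(1-y_k)^{d-k}$, and the same per-variable degree count $\deg_{y_k}=\sum_{j\ge k}a_j\le q$ to justify exactness. The only cosmetic difference is that you invoke the existence of the univariate rules abstractly, whereas the paper writes out the Jacobi-polynomial nodes and weights explicitly.
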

\begin{proof}
Recall the Gauss-Jacobi quadrature formula of degree $2s+1$ (see \cite{hale2013fast,gil2019noniterative})
$$
\int_{-1}^1 (1-x)^{\alpha}(1+x)^{\beta}f(x) dx = \sum_{j=1}^{s+1} \lambda_j f(w_j) + Rf,
$$
where $w_1,\dots,w_s,w_{s+1}$ are the roots of the Jacobi polynomial 
$$
P^{(\alpha,\beta)}_{s+1}(x):=\frac{(-1)^{s+1}}{2^{s+1} (s+1)!(1-x)^{\alpha}(1+x)^{\beta}}\frac{d^{s+1}}{dx^{s+1}}\left((1-x)^{\alpha}(1+x)^{\beta}(1-x^2)^{s+1}\right),
$$
and 
\begin{align*}
    \lambda_j:=&\int_{-1}^1(1-x)^{\alpha}(1+x)^{\beta}\prod_{k\ne j}\frac{x-w_k}{w_j-w_k} dx\\
    =&\frac{\Gamma(s+1+\alpha+1)\Gamma(s+1+\beta+1)}{\Gamma(s+1+\alpha+\beta+1)(s+1)!}\frac{2^{\alpha+\beta+1}}{(1-w_j^2)[{P^{(\alpha,\beta)}_{s+1}}'(w_j)]^2},
\end{align*}
where  ${P^{(\alpha,\beta)}_{s+1}}'(w_j)$ is the derivative of $P^{(\alpha,\beta)}_{s+1}(x)$ (with respect to $x$) evaluated at $w_j$~.
%

For an arbitrary interval $[a,b]$,
\begin{align*}
&\int_{a}^b(b-x)^{\alpha}(x-a)^{\beta} f(x) dx \\
&\textstyle \qquad = \int_{-1}^1 (1-t)^{\alpha}(1+t)^{\beta} f\left(\frac{(b-a)t+(a+b)}{2}\right)\left(\frac{b-a}{2}\right)^{\alpha+\beta+1} dt \\
&\textstyle \qquad = \sum_{j=1}^{s+1} \left(\frac{b-a}{2}\right)^{\alpha+\beta+1}\lambda_j f\left(\frac{(b-a)w_j+(a+b)}{2}\right) + Rf.
\end{align*}
Thus, letting $a=0$, $b=1$, $\alpha = d-k$ for $k=1,\dots, d$, $\beta=0$, we obtain \eqref{eqn:Gauss_Jacobi}.

Now we rewrite $\int_{\Delta_d} f(\bx)d\bx$ via iterated univariate integration:
$$
\int_{\Delta_d} f(\bx) d\bx = \int_{0}^1\int_0^{1-x_1}\!\!\!\dots\! \int_0^{1-x_1-\dots-x_{d-1}} f(\bx) ~dx_d \dots dx_2 dx_1.
$$
Then we apply the transformation $x_1=y_1$, $x_2=(1-x_1)y_2$, $\dots$, $x_d = (1-x_1-\dots-x_{d-1})y_d$.
Notice that $1-\sum_{k=1}^j x_k=\prod_{k=1}^j(1-y_k)$ and $x_j=y_j\prod_{k=1}^{j-1}(1-y_k)$. The Jacobian determinant of the transformation is
$$
(1-y_1)^{d-1}(1-y_2)^{d-2}\dots(1-y_{d-1}).
$$
So the integration becomes
$$
\int_{\Delta_d}\! f(\bx) d\bx = \!\int_{0}^1\!\int_0^{1}\!\!\!\dots\! \int_0^{1}  (1-y_1)^{d-1}(1-y_2)^{d-2}\dots(1-y_{d-1})f(\bx)~dy_d \dots dy_2 dy_1.
$$
Notice that $x_1^{\alpha_1}\dots x_d^{\alpha_d} = \prod_{j=1}^d y_j^{\alpha_j}(1-y_j)^{\alpha_{j+1}+\dots+\alpha_d}$. Thus, the degree of the integrand with respect to $y_j$ is $\alpha_j+\dots+\alpha_{d}$, which is at most the degree of $f$.
Applying \eqref{eqn:Gauss_Jacobi} for $k=d,d-1,\dots,1$ sequentially, we can obtain the conical product formula of degree $2s$. 
%
\qed \end{proof}

For example, in the case $d=2$, the conical product formula of degree $2s+1$ requires $(s+1)^2$ points, which is no more than $\binom{s+3}{s}=\frac{(s+3)(s+2)(s+1)}{6}$ employed by \eqref{eqn:GM}.
%
%
In the case $s=0$, the conical product formula reduces to  the formula ``$T_d:~\mbox{1-1}$'' of  \cite[p. 307]{stroud1971approximate}.
In the case $s=1$, following the proof of Theorem \ref{Str}, we obtain a formula different from \eqref{eqn:GM}:
%
\begin{align*}
    &\int_{\Delta_2} f(\bx) d\bx = \sum_{j_1=1}^2\sum_{j_2=1}^2\lambda_{1,j_1}\lambda_{2,j_2} f(w_{1,j_1}, (1-w_{1,j_1})w_{2,j_2}), \mbox{ with} \\
    &\textstyle\lambda_{1,1}:=\frac{9+\sqrt{6}}{36}, ~w_{1,1} := \frac{4-\sqrt{6}}{10},~\lambda_{1,2}:=\frac{9-\sqrt{6}}{36}, ~w_{1,2} := \frac{4+\sqrt{6}}{10},\\
    &\textstyle\lambda_{2,1}:=\frac12, ~w_{2,1} := \frac12\left(1+\sqrt{\frac13}\right),~\lambda_{2,2}:=\frac12, ~w_{2,2} := \frac12\left(1-\sqrt{\frac13}\right).
\end{align*}
%
For larger $s$, the exact expressions for the points and weights are tedious, but we can work them out numerically. 
For example, \cite[p. 314]{stroud1971approximate} has ``$T_2:~\mbox{7-1}$'' for $d=2,s=3$ ($16$ points) 
and \cite[p. 315]{stroud1971approximate} has ``$T_3:~\mbox{7-1}$'' for  $d=3,s=3$ ($64$ points).
When $d$ is large, the conical product formula of degree $2s+1$ requires $(s+1)^d$  points, which is more than $\binom{s+d+1}{s}$ points in \eqref{eqn:GM}.

The minimum number of points required in an integration formula of degree $q$ is still open for general $q$. There is a lower bound $\binom{s+d}{s}$ \cite[p. 118--120]{stroud1971approximate} for a formula of degree $2s$. There is a small improvement for the odd degree case $2s+1$. \cite{grundmann1978invariant} conjectures that Theorem \ref{Grundmann_Moller} has the minimum number of points for a formula of degree $2s+1$ when $d\ge 2s$. 
The coefficients in the conical product formula are always positive and sum to $1/d!$, which is a desirable property for numerical stability.
As mentioned in \cite{grundmann1978invariant}, a significant fraction of the
weights in \eqref{eqn:GM} are negative, which might amplify the roundoff errors in the approximation.

Numerically, there are also adaptive algorithms based on a cubature rule and a subdivision strategy to automatically achieve the desired precision of an integral over a general simplex; see \cite{genz2003adaptive}.

%
The cubature formulae problem is also closely related to the symmetric tensor decomposition and Waring's problem; see  \cite{collowald2015problemes,comonSymmetricTensorsSymmetric2008}.
We can view the construction of a cubature formula of degree $q$ as the construction of $\lambda_j$, $w_j$ satisfying
$$
\sum_{j=1}^r \lambda_j p_k(\bw_j) = \int_{\Delta_d} p_k(\bx) d\bx:=\Lambda(p_k), ~k=1,\dots,\textstyle\binom{d+q}{q},
$$
where the $p_k$ form a basis of the polynomials with degree at most $q$.
%
%

A tensor $T\in \R^{(d+1)\times\dots\times(d+1)}$ is called {\emph symmetric} if $t_{j_{\sigma(1)}\dots j_{\sigma(q)}} = t_{j_1\dots j_q}$ for all permutations $\sigma$ on $\{1,\dots, q\}$.
We construct the following tensor $T=[t_{j_1\dots j_q}]\in \R^{(d+1)\times\dots\times(d+1)}$, $j_k\in\{0,1,\dots,d\}$ such that
$$
t_{j_1\dots j_q} = \int_{\Delta_d} x_{j_1}\dots x_{j_q} d\bx,~\text{where}, x_0 = 1-\sum_{j=1}^d x_j = 1-\|\bx\|_1~.
$$
It is easy to see that $T$ is symmetric because 
$$
t_{j_1\dots j_q} = \int_{\Delta_d} x_0^{\alpha_0}\dots x_d^{\alpha_d}d\bx, 
$$
where $\alpha_k, k\in\{0,1,\dots,d\}$ is the number of index $k$ appearing in $j_1,\dots,j_q$, which does not change under the permutations $\sigma$. 
And because $\sum_{k=0}^d \alpha_k = q$, a cubature formula of degree $q$ shows that 
$$
t_{j_1\dots j_q} = \sum_{j=1}^r \lambda_j (\bw_j)_{j_1} \dots (\bw_j)_{j_q}~,
$$
which yields a rank-$r$ symmetric tensor decomposition of $T$:
$$
T = \sum_{j=1}^r \lambda_j \underbrace{(1-\|\bw_j\|_1,\bw_j) \otimes (1-\|\bw_j\|_1,\bw_j) \dots \otimes (1-\|\bw_j\|_1,\bw_j)}_{q~\text{times}}.
$$
%

On the other hand, if there exists a rank-$r$ symmetric tensor decomposition of $T$ with $\by_j\ne 0$:
$$
T = \sum_{j=1}^r \lambda_j \underbrace{\by_j \otimes \by_j \dots \otimes \by_j}_{q~\text{times}}.
$$
We can scale $\by_j$ to satisfy $\|\by_j\|_1=1$, i.e.,
$$
T = \sum_{j=1}^r \lambda_j \|\by_j\|_1^q \underbrace{\frac{\by_j}{\|\by_j\|_1} \otimes \frac{\by_j}{\|\by_j\|_1} \dots \otimes \frac{\by_j}{\|\by_j\|_1}}_{q~\text{times}}.
$$
Therefore, the construction of a cubature formula with minimum points is equivalent to the calculation of the {\emph symmetric rank} of the corresponding tensor $T$.

\cite{comonSymmetricTensorsSymmetric2008} points out the equivalence between symmetric tensor and homogeneous polynomials in $\R[x_0,x_1,\dots,x_d]_q$. The associated homogeneous polynomial to the symmetric tensor $T$ is
\begin{align*}
p_T(\bx) =& \sum_{j_1\dots j_q} t_{j_1\dots j_q} x_0^{\alpha_0(j)}x_1^{\alpha_1(j)}\dots x_d^{\alpha_d(j)}\\ 
=& \sum_{\|\alpha\|_1=q}\frac{q!}{\alpha_0!\alpha_1!\dots\alpha_d!} t_{\alpha_0\alpha_1\dots\alpha_d}x_0^{\alpha_0} x_1^{\alpha_1}\dots x_d^{\alpha_d},\nonumber
\end{align*}
where $\alpha_k(j), k\in\{0,1,\dots,d\}$ is the number of index $k$ appearing in $j_1,\dots,j_q$, and $t_{\alpha_0\alpha_1\dots\alpha_d}:=\int_{\Delta_d}x_0^{\alpha_0}\dots x_d^{\alpha_d} d\bx$.

The symmetric tensor decomposition is closely related to secant varities of the Veronese variety if the field is $\mathbb{C}$.
Consider the map from a vector to a $k$-th power of a linear form:
\begin{align*}
    \nu_{n,k} : \mathbb{C}^n &\rightarrow \mathbb{C}[x_1,\dots,x_n]_{k}\\
                \mathbf{w}   &\rightarrow (w_1x_1+\dots+w_n x_n)^k.
\end{align*}
The image $\nu_{n,k}(\mathbb{C}^n)$ is called the \emph{Veronese variety} and is denoted by $\mathcal{V}_{n,k}$.
Recall the equivalence between symmetric tensor and homogeneous polynomials.
We see that a symmetric tensor of rank 1 corresponds to a point on the Veronese variety. A symmetric tensor of rank at most $r$ lies in the linear space spanned by $r$ points of the Veronese variety.
The closure of the union of all linear spaces spanned by $r$ points of the Veronese variety $\mathcal{V}_{n,k}$ is called the $(r-1)$-th \emph{secant variety} of $\mathcal{V}_{n,k}$.

Therefore, the construction of a cubature formula with minimum points is equivalent to decompose the corresponding homogeneous polynomial to a sum of powers of linear form with minimum number of summands, which is known as the \emph{polynomial Waring problem}.

\section{Comparing na\"{i}ve  and perspective relaxations}\label{sec:multi}
In this section, we present some concrete results comparing
volumes of na\"{i}ve  and perspective relaxations. Quantities of interest are
the \emph{cut-off amount}  
$ \vol(\hyperlink{P0fj}{P^0(f,J)} ) -  \vol(\hyperlink{Pfj}{P(f,J)} ) $ 
and the  \emph{cut-off ratio}
$\frac{\vol(\hyperlink{P0fj}{P^0(f,J)} ) -  \vol(\hyperlink{Pfj}{P(f,J)} )}{ \vol(\hyperlink{P0fj}{P^0(f,J)} )}$. 
For a family of examples, understanding 
when the cut-off ratio is bounded below by a positive quantity or when it
tends to zero and at what rate, gives us information on when the 
na\"{i}ve is an adequate approximation of the  perspective relaxation.

\subsection{$q$-homogeneous functions}

Suppose that $f(\bx)$ is $q$-homogeneous, i.e., $f(\lambda \bx) =\lambda^q f(\bx)$ for $\lambda\ge 0$ ($\lambda=0$ implies that $f(\mathbf{0}) = 0$). Then, for a general simplex, we can compute the volume of the na\"{i}ve relaxation and 
compare it to the  volume of the persepctive relaxation. 
%
\begin{lemma}\label{lem:naivevol_qhomo}
    Suppose that $f$ is continuous, $q$-homogeneous ($q\ge 1$) and convex on $\conv(J\cup \{\mathbf{0}\})$ where the $d$-simplex $J:=\conv\{\bv_0,\allowbreak \bv_1, \dots,\bv_{d}\} \subset \R^d_{\geq 0}\setminus \{ \mathbf{0}\}$. Then
 \begin{align*}
   \vol(\hyperlink{P0fj}{P^0(f,J)} )
    &=\frac{1}{(d+2)!}\left|\det\begin{bmatrix}
        \bv_0 & \bv_1 & \dots & \bv_d\\
        1   &  1  & \dots & 1
    \end{bmatrix}\right|
    \sum_{j=0}^d f(\bv_j) - \frac{1}{q+d+1}\int_{J} f(\bx) d\bx. 
    \end{align*}  
\end{lemma}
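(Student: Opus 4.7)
The plan is to start directly from Theorem \ref{thm:naivevol}, which already expresses
\[
\vol(P^0(f,J)) = \frac{1}{(d+2)!}\left|\det\begin{bmatrix}\bv_0 & \bv_1 & \dots & \bv_d\\ 1 & 1 & \dots & 1\end{bmatrix}\right|\sum_{j=0}^d f(\bv_j) - \int_0^1 z^d \int_J f(z\bx)\, d\bx\, dz,
\]
so the only thing that remains is to simplify the double integral on the right using $q$-homogeneity.

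The key observation is that for $z \in [0,1]$ and $\bx \in J$, the hypothesis $f(\lambda \bx) = \lambda^q f(\bx)$ (valid for $\lambda \geq 0$) immediately yields $f(z\bx) = z^q f(\bx)$. By Fubini's theorem (everything is nonneg\-ative and continuous, so there is no integrability issue), we may then pull $f(\bx)$ outside the inner $z$-integration:
\[
\int_0^1 z^d \int_J f(z\bx)\, d\bx\, dz = \int_J f(\bx)\, d\bx \int_0^1 z^{d+q}\, dz = \frac{1}{q+d+1}\int_J f(\bx)\, d\bx.
\]
Substituting this back into the expression from Theorem \ref{thm:naivevol} gives exactly the claimed formula.

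There is no real obstacle here: the proof is just two lines once Theorem \ref{thm:naivevol} is available, and the convexity hypothesis on $\conv(J \cup \{\mathbf{0}\})$ together with $q \geq 1$ is what guarantees that the setup of the naïve relaxation (in particular $f(\mathbf{0}) = 0$, which follows from $q$-homogeneity with $q \geq 1$) is consistent with the hypotheses of Theorem \ref{thm:naivevol}. I would briefly note this compatibility before applying the theorem, then carry out the one-line homogeneity reduction above.
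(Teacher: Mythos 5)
Your proposal is correct and follows essentially the same route as the paper: apply Theorem \ref{thm:naivevol} and then use $q$-homogeneity to factor the inner integral as $\int_0^1 z^{q+d}\,dz \cdot \int_J f(\bx)\,d\bx = \frac{1}{q+d+1}\int_J f(\bx)\,d\bx$. Your extra remark that $f(\mathbf{0})=0$ follows from $q$-homogeneity with $q\ge 1$, so the hypotheses of Theorem \ref{thm:naivevol} are met, is a small but welcome addition.
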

\begin{proof}
    %

    By Theorem \ref{thm:naivevol}, we have
    \begin{align*}
   \vol(\hyperlink{P0fj}{P^0(f,J)} ) =\frac{1}{(d+2)!}\left|\det\begin{bmatrix}
        \bv_0 & \bv_1 & \dots & \bv_d\\
        1   &  1  & \dots & 1
    \end{bmatrix}\right|
    \sum_{j=0}^d f(\bv_j) - \int_0^1 z^d\int_J f(z\bx)d\bx dz.
    \end{align*}
    Because $f(\bx)$ is $q$-homogeneous, we have $f(z\bx) = z^q f(\bx)$, and we obtain
    $$
    \int_0^1 z^d\int_J f(z\bx)d\bx dz = \int_{0\le z\le 1} z^{q+d} dz\int_{\bx\in J} f(\bx) d\bx = \frac{1}{q+d+1}\int_{J} f(\bx) d\bx.
    $$
    The result follows.
\qed \end{proof}

\begin{theorem}\label{thm:naivevol_qhomo}
    Suppose that $f$ is continuous, $q$-homogeneous ($q\ge 1$) and convex on $\conv(J\cup \{\mathbf{0}\})$ where the $d$-simplex $J:=\conv\{\bv_0,\allowbreak \bv_1, \dots,\bv_{d}\} \subset \R^d_{\geq 0}\setminus \{ \mathbf{0}\}$. Then
      $$
 \vol(\hyperlink{P0fj}{P^0(f,J)} ) -  \vol(\hyperlink{Pfj}{P(f,J)} )
    = \frac{q-1}{(q+d+1)(d+2)}\int_{J} f(\bx) d\bx.
    $$
\end{theorem}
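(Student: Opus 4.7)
The plan is to directly combine Theorem \ref{thm:perspecvol} and Lemma \ref{lem:naivevol_qhomo}, which already contain everything needed. Both formulas for $\vol(P(f,J))$ and $\vol(P^0(f,J))$ share the identical ``vertex-sum'' term $\frac{1}{(d+2)!}\left|\det\begin{bmatrix}\bv_0 & \dots & \bv_d\\ 1 & \dots & 1\end{bmatrix}\right|\sum_{j=0}^d f(\bv_j)$, so taking their difference eliminates it and leaves only the integral terms.

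More precisely, I would first write
\[
\vol(P^0(f,J)) - \vol(P(f,J)) = \left(\frac{1}{d+2} - \frac{1}{q+d+1}\right)\int_{J} f(\bx)\, d\bx,
\]
where the $\frac{1}{d+2}$ comes from Theorem \ref{thm:perspecvol} and the $\frac{1}{q+d+1}$ comes from Lemma \ref{lem:naivevol_qhomo} (which exploits $q$-homogeneity to evaluate $\int_0^1 z^d \int_J f(z\bx)\, d\bx\, dz$ as $\frac{1}{q+d+1}\int_J f(\bx)\, d\bx$). Then I would simplify the scalar factor:
\[
\frac{1}{d+2} - \frac{1}{q+d+1} = \frac{(q+d+1)-(d+2)}{(d+2)(q+d+1)} = \frac{q-1}{(d+2)(q+d+1)},
\]
which yields the claimed identity.

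There is no real obstacle here: the entire content of the theorem has been absorbed into the preceding lemma (where the $q$-homogeneity hypothesis is used) and into Theorem \ref{thm:perspecvol} (which does not even need homogeneity). The only bookkeeping is to observe that the vertex-sum contributions cancel exactly and that the arithmetic of the two fractions collapses to $\frac{q-1}{(d+2)(q+d+1)}$. It is also worth noting, as a sanity check, that when $q=1$ the function $f$ is affine (being $1$-homogeneous and convex, hence in fact linear on $J\cup\{\mathbf{0}\}$), so $f=\mu$ on $J$, both relaxations coincide, and the formula correctly gives a difference of zero.
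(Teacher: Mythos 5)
Your proof is correct and takes essentially the same approach as the paper: both simply subtract the formula of Theorem \ref{thm:perspecvol} from that of Lemma \ref{lem:naivevol_qhomo}, cancel the common vertex-sum term, and simplify $\frac{1}{d+2}-\frac{1}{q+d+1}=\frac{q-1}{(d+2)(q+d+1)}$. (One minor caveat in your closing sanity check: a $1$-homogeneous convex function need not be linear, e.g.\ $\max\{x_1,\dots,x_d\}$; the vanishing at $q=1$ is more safely justified by noting that $zf(\bx/z)=f(\bx)$ for $1$-homogeneous $f$, so the two relaxations coincide.)
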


\begin{proof}
    Recall from Theorem \ref{thm:perspecvol} that the volume of the perspective relaxation \hyperlink{Pfj}{$P(f,J)$} is
    \begin{align*}
   \vol(\hyperlink{Pfj}{P(f,J)} ) &= \frac{1}{(d+2)!}\left|\det\begin{bmatrix}
        \bv_0 & \bv_1 & \dots & \bv_d\\
        1   &  1  & \dots & 1
    \end{bmatrix}\right|
    \sum_{j=0}^d f(\bv_j) - \frac{1}{d+2}\int_{J} f(\bx)d\bx.
    \end{align*}
    Combining this with Lemma \ref{lem:naivevol_qhomo}, the result follows.
\qed\end{proof}
\begin{remark}
Concerning Theorem \ref{thm:naivevol_qhomo}, as a reality check:
(i) when $q=1$, we obtain $ \vol(\hyperlink{P0fj}{P^0(f,J)} ) =  \vol(\hyperlink{Pfj}{P(f,J)} )$, which agrees with the fact that both of these volumes are zero; 
(ii) when $q>1$, because $zf(\bx)\ge f(z\bx) = z^q f(\bx)$ for any $z\in[0,1]$, we have that $f$ is nonnegative,
which implies from the theorem that
$ \vol(\hyperlink{P0fj}{P^0(f,J)} ) \geq  \vol(\hyperlink{Pfj}{P(f,J)} )$ 
(which we know anyway because $\hyperlink{Pfj}{P(f,J)} \subset \hyperlink{P0fj}{P^0(f,J)}$). 
\end{remark}

Theorem \ref{thm:naivevol_qhomo} is a broad generalization of  
the following key result of \cite{lee_gaining_2020} 
giving an expression for the difference of
volumes for convex power functions. 

\begin{cor}[{\cite[Cor. 14]{lee_gaining_2020}}] 
For $d:=1$, $J:=[\ell,u]$ ($u>\ell>0$), and $f(x) := x^q$, with $q>1$, we have 
\[
 \vol(\hyperlink{P0fj}{P^0(f,J)} ) -  \vol(\hyperlink{Pfj}{P(f,J)} ) = \frac{(q-1)(u^{q+1}-\ell^{q+1})}{3(q+2)(q+1)}.
\]
\end{cor}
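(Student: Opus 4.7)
The plan is to obtain this corollary as a direct specialization of Theorem \ref{thm:naivevol_qhomo}, so no new ideas beyond that theorem are needed. First I would verify that the hypotheses of Theorem \ref{thm:naivevol_qhomo} are satisfied in this setting: with $d=1$, the simplex $J=[\ell,u]\subset\R_{\geq 0}\setminus\{0\}$ has vertices $\bv_0=\ell$, $\bv_1=u$; the function $f(x)=x^q$ is $q$-homogeneous (by definition of the monomial), and for $q>1$ it is convex on $\conv(J\cup\{0\})=[0,u]$.

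Next I would apply the theorem, which gives
\[
\vol(P^0(f,J))-\vol(P(f,J)) = \frac{q-1}{(q+d+1)(d+2)}\int_J f(\bx)\,d\bx.
\]
With $d=1$, the prefactor becomes $\frac{q-1}{(q+2)\cdot 3}$. The remaining integral is a one-dimensional elementary computation,
\[
\int_\ell^u x^q\,dx = \frac{u^{q+1}-\ell^{q+1}}{q+1}.
\]
Substituting yields
\[
\vol(P^0(f,J))-\vol(P(f,J)) = \frac{q-1}{3(q+2)}\cdot\frac{u^{q+1}-\ell^{q+1}}{q+1} = \frac{(q-1)(u^{q+1}-\ell^{q+1})}{3(q+2)(q+1)},
\]
which is the claimed identity. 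There is no genuine obstacle here; the whole content of the corollary is already packaged in Theorem \ref{thm:naivevol_qhomo}, and the only work is the specialization and one antiderivative. If anything, the only thing worth double-checking is the convexity hypothesis on all of $[0,u]$ (as opposed to merely on $[\ell,u]$), which is immediate for $f(x)=x^q$ with $q\geq 1$ and $x\geq 0$.
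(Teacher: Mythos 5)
Your proposal is correct and matches the paper's treatment: the corollary is stated there precisely as the $d=1$, $f(x)=x^q$ specialization of Theorem \ref{thm:naivevol_qhomo}, with the prefactor $\frac{q-1}{(q+2)\cdot 3}$ and the elementary integral $\int_\ell^u x^q\,dx=\frac{u^{q+1}-\ell^{q+1}}{q+1}$ combining to give the claimed formula. Your check of convexity on all of $\conv(J\cup\{0\})=[0,u]$ is the right hypothesis to verify and is indeed immediate here.
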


For  the remainder of Section 4.1, as compared to the hypotheses of Theorem \ref{thm:naivevol_qhomo}, we restrict our attention to 
$f(\bx) := (\bc^\top \bx)^q$ ($\bc\ne \mathbf{0}$) satisfying either 
\begin{enumerate}
\item[(i)] $q>1$ ($q\in\mathbb{R}$) and $\bc^\top \bv_j\ge0$, or
\item[(ii)] $q$ is an even integer (without the assumption $\bc^\top \bv_j\ge0$).
\end{enumerate}
Note that (i) and (ii) each ensure that $f(\bx)$ is convex on $J$. 

We establish in these cases that the cut-off ratio 
has a positive lower bound.  This 
demonstrates that in these cases, the excess volume of
the  na\"{i}ve  relaxation, as compared to the perspective relaxation, is substantial.

\subsubsection{Case (i)}~$f(\bx) = (\bc^\top \bx)^q$ ($q>1$ with further conditions on $\bc$)
\bigskip

Suppose that $q>1$, and $\bc^\top \bv_j\ge 0$ for $j=0,1,\dots,d$.

\begin{lemma}\label{thm:q>1}
For $J:=\conv\{\bv_0,\allowbreak \bv_1, \dots,\bv_{d}\}$, if $\bc\ne\mathbf{0}$, $q\ge 1$ and $\bc^\top \bv_j\ge 0$, then
$$
\int_{J} (\bc^\top \bx)^q d\bx \ge d!\vol(J)\frac{\Gamma(q+1)}{\Gamma(q+d+1)} \sum_{j=0}^d (\bc^\top \bv_j)^q.
$$
The inequality becomes tight when $\frac{\bc^\top \bv_j}{\bc^\top \bv_k}\rightarrow 0$ for all $j\ne k$, where $\bc^\top \bv_k= \max_j{\bc^\top \bv_j}$. 
\end{lemma}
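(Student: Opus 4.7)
The plan is to reduce to the standard simplex $\Delta_d$, use the elementary inequality $(\sum_j x_j)^q \ge \sum_j x_j^q$ valid for $x_j\ge 0$ and $q\ge 1$, and then invoke the monomial formula (Proposition \ref{prop:mono_standard_simplex}).

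First, apply the affine change of variables $\bx = \bv_0 + \bB\bt$ with $\bt\in\Delta_d$, whose Jacobian absolute value is $|\det\bB|=d!\vol(J)$. Writing the image point in barycentric form $\bx = \sum_{j=0}^d t_j^* \bv_j$, where $t_0^*:=1-\sum_{j=1}^d t_j$ and $t_j^*:=t_j$ for $j=1,\dots,d$, we obtain
\[
\int_{J}(\bc^\top \bx)^q d\bx \;=\; d!\vol(J)\int_{\Delta_d}\Bigl(\sum_{j=0}^d a_j t_j^*\Bigr)^{\!q} d\bt,
\]
where $a_j:=\bc^\top \bv_j\ge 0$ by hypothesis.

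Next, I would invoke the scalar inequality: for $q\ge 1$ and nonnegative reals $x_0,\dots,x_d$, $\bigl(\sum_j x_j\bigr)^q \ge \sum_j x_j^q$. This is standard and proved by induction on the number of summands together with the one-variable fact that $f(t):=(1+t)^q-1-t^q$ satisfies $f(0)=0$ and $f'(t)\ge 0$ for $t\ge 0$. Applying it pointwise with $x_j:=a_j t_j^*\ge 0$ yields
\[
\Bigl(\sum_{j=0}^d a_j t_j^*\Bigr)^{\!q} \;\ge\; \sum_{j=0}^d a_j^q (t_j^*)^q.
\]
Integrating and using the symmetric roles of $t_0^*,t_1^*,\dots,t_d^*$, together with Proposition \ref{prop:mono_standard_simplex} applied to the monomial with one exponent equal to $q$ and the rest equal to $0$, gives
\[
\int_{\Delta_d}(t_j^*)^q d\bt \;=\; \frac{\Gamma(q+1)}{\Gamma(q+d+1)}
\]
for every $j\in\{0,1,\dots,d\}$. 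Combining with the displayed inequality and the prefactor $d!\vol(J)$ yields the stated bound.

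Finally, for the tightness claim, suppose $a_k=\max_j a_j$ and $a_j/a_k\to 0$ for all $j\ne k$. Writing $\sum_j a_j t_j^* = a_k\bigl(t_k^* + \sum_{j\ne k}(a_j/a_k) t_j^*\bigr)$, the integrand converges pointwise on $\Delta_d$ to $a_k^q (t_k^*)^q$, and is dominated by $a_k^q$, so by bounded convergence the left-hand side tends to $d!\vol(J)\,a_k^q\,\Gamma(q+1)/\Gamma(q+d+1)$; simultaneously $\sum_j a_j^q \to a_k^q$, so the right-hand side tends to the same limit. The main thing to watch is simply that the elementary inequality $(\sum x_j)^q\ge\sum x_j^q$ genuinely requires both nonnegativity and $q\ge 1$, and both are available; no other obstacle is anticipated.
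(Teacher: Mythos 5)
Your proposal is correct and follows essentially the same route as the paper's proof: reduce to $\Delta_d$ by the affine substitution, expand $\bc^\top\bx$ in barycentric coordinates, apply the superadditivity inequality $(\sum_j x_j)^q\ge\sum_j x_j^q$ for nonnegative $x_j$ and $q\ge1$, and evaluate $\int_{\Delta_d}(t_j^*)^q\,d\bt$ via Proposition \ref{prop:mono_standard_simplex}. Your bounded-convergence argument for the tightness claim is in fact a bit more careful than the paper's one-line remark (just normalize by $a_k^q$ so the dominating bound is constant when $a_k$ itself varies).
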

\begin{proof}
$\bx\in J \Leftrightarrow \by= B^{-1}(\bx-\bv_0)\in \Delta_d$, where $B: = \left[\bv_1-\bv_0,~\dots~,\bv_d-\bv_0\right]$.
\begin{align*}
   & \int_{J} (\bc^\top \bx)^q d\bx \\
   &\qquad= d!\vol(J)\int_{\Delta_d} (\bc^\top B \by + \bc^\top \bv_0)^q d\by\\ 
    &\qquad = d!\vol(J)\int_{\Delta_d} [(\bc^\top \bv_1) y_1 + \dots + (\bc^\top \bv_d) y_d + (\bc^\top \bv_0)(1-y_1-\dots-y_d)]^q d\by\\
    &\qquad \ge d!\vol(J)\sum_{j=0}^d \int_{\Delta_d} (\bc^\top \bv_j)^q (y_j)^q d\by, \quad \mbox{ with } y_0 := 1-y_1-\dots-y_d\\
    &\qquad = d!\vol(J)\frac{\Gamma(q+1)}{\Gamma(q+d+1)}\sum_{j=0}^d (\bc^\top \bv_j)^q
\end{align*}
The inequality follows from $(\sum_{j=1}^n x_j)^q\ge \sum_{j=1}^n x_j^q$ when $x_j\ge0$ and $q\ge 1$.
The last equality follows from Proposition \ref{prop:mono_standard_simplex}.
Because $\bc\ne 0$, $\bc^\top \bv_k= \max_j{\bc^\top \bv_j}>0$. The inequality holds tight only when $\frac{\bc^\top \bv_j}{\bc^\top \bv_k}\rightarrow 0$ for all $j\ne k$, where $\bc^\top \bv_k= \max_j{\bc^\top \bv_j}$. 
\qed \end{proof}
\begin{theorem}\label{thm:ratio1}
Suppose that $f(\bx)=(\bc^\top\bx)^q$ ($\bc\ne\mathbf{0}$) with $q> 1$ and $\bc^\top \bv_j\ge0$, $j=0,1,\dots,d$, where the $d$-simplex $J:=\conv\{\bv_0,\allowbreak \bv_1, \dots,\bv_{d}\} \subset \R^d_{\geq 0}\setminus \{ \mathbf{0}\}$. Then
    $$
   \frac{\vol(\hyperlink{P0fj}{P^0(f,J)} ) -  \vol(\hyperlink{Pfj}{P(f,J)} )}{ \vol(\hyperlink{P0fj}{P^0(f,J)} )}
    \ge \frac{(q-1)}{\frac{\Gamma(q+d+2)}{(d+1)!\Gamma(q+1)}-(d+2)}>0.
    $$
The lower bound becomes tight when $\frac{\bc^\top \bv_j}{\bc^\top \bv_k}\rightarrow 0$ for all $j\ne k$, where $\bc^\top \bv_k= \max_j{\bc^\top \bv_j}$.
\end{theorem}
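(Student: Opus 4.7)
The plan is to combine the three preceding results (Theorem \ref{thm:naivevol_qhomo}, Lemma \ref{lem:naivevol_qhomo}, and Lemma \ref{thm:q>1}) and then reduce the ratio to a single algebraic expression in which the common factor $\int_J f(\bx)d\bx$ cancels.

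First, I would write the numerator of the cut-off ratio using Theorem \ref{thm:naivevol_qhomo}:
$$
\vol(P^0(f,J))-\vol(P(f,J)) = \frac{q-1}{(q+d+1)(d+2)}\int_J f(\bx) d\bx.
$$
For the denominator, I would use Lemma \ref{lem:naivevol_qhomo}, noting that $\left|\det[\bv_0,\dots,\bv_d;1,\dots,1]\right| = d!\vol(J)$, which yields
$$
\vol(P^0(f,J)) = \frac{\vol(J)}{(d+1)(d+2)}\sum_{j=0}^d f(\bv_j) - \frac{1}{q+d+1}\int_J f(\bx) d\bx.
$$
The cut-off ratio is thus a fraction whose numerator is a positive multiple of $\int_J f(\bx) d\bx$ and whose denominator mixes $\sum_j f(\bv_j)$ with $\int_J f(\bx) d\bx$. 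To get a clean bound, I would replace the vertex sum by the integral by invoking Lemma \ref{thm:q>1}, which under the hypothesis $\bc^\top \bv_j \geq 0$ gives
$$
\sum_{j=0}^d f(\bv_j) \leq \frac{\Gamma(q+d+1)}{d!\,\Gamma(q+1)\,\vol(J)} \int_J f(\bx) d\bx.
$$
This is the key step: Lemma \ref{thm:q>1} is what makes the denominator a constant multiple of $\int_J f(\bx) d\bx$, so $\int_J f(\bx) d\bx$ cancels cleanly.

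After substitution, multiplying numerator and denominator by $(q+d+1)(d+2)$ and using the identity $(q+d+1)\Gamma(q+d+1) = \Gamma(q+d+2)$, the ratio simplifies to
$$
\frac{q-1}{\dfrac{\Gamma(q+d+2)}{(d+1)!\,\Gamma(q+1)} - (d+2)}.
$$
To conclude, I would verify positivity of the bound: at $q=1$ the quantity $\Gamma(q+d+2)/((d+1)!\,\Gamma(q+1))$ equals $\binom{d+2}{1}/1 = d+2$, and this quantity is strictly increasing in $q$ (since it can be written as $\prod_{k=1}^{d+1}(q+k)/(d+1)!$, a product of strictly increasing positive factors), so the denominator is strictly positive for $q>1$. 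For the tightness claim, I would note that the single inequality used was Lemma \ref{thm:q>1}, whose tightness in the stated limit transfers directly to the cut-off ratio.

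I do not anticipate a substantive obstacle; the only subtle point is making sure the direction of the inequality in Lemma \ref{thm:q>1} (a \emph{lower} bound on the integral, hence an \emph{upper} bound on $\sum_j f(\bv_j)$ in terms of the integral) correctly produces a \emph{lower} bound on the cut-off ratio, which it does because the vertex-sum term appears with a positive sign in the denominator.
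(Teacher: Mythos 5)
Your proposal is correct and follows essentially the same route as the paper: express the numerator via Theorem \ref{thm:naivevol_qhomo}, the denominator via Lemma \ref{lem:naivevol_qhomo}, and bound the vertex sum by the integral using Lemma \ref{thm:q>1} so that $\int_J f(\bx)\,d\bx$ cancels. Your explicit verification that $\Gamma(q+d+2)/((d+1)!\,\Gamma(q+1))=\prod_{k=1}^{d+1}(q+k)/(d+1)!$ exceeds $d+2$ for $q>1$ is a small welcome addition that the paper leaves implicit.
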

\begin{proof}
By Theorem \ref{thm:naivevol}, Theorem \ref{thm:naivevol_qhomo} and Lemma \ref{thm:q>1},
\begin{align*}
\frac{\vol(\hyperlink{P0fj}{P^0(f,J)} ) -  \vol(\hyperlink{Pfj}{P(f,J)} )}{ \vol(\hyperlink{P0fj}{P^0(f,J)} )}
    & =\frac{q-1}{d+2}\cdot\frac{\int_{J}f(\bx)d\bx}{\frac{(q+d+1)\vol(J)}{(d+2)(d+1)}\sum_{j=0}^d f(\bv_j)-\int_{J}f(\bx)d\bx}\\ &\ge\frac{(q-1)}{\frac{\Gamma(q+d+2)}{(d+1)!\Gamma(q+1)}-(d+2)}>0.
\end{align*}
\qed \end{proof}
For example, when the simplex is parametrized by $u>0$, with $\bv_0$ fixed and $\bv_j =\bv_0+u\be_j$, for $j=1,\dots,d$, and $\bc = \lambda^{1/q}\be_k$ for any $\lambda>0$ and $k=1,\dots,d$, i.e., $f(\bx) = \lambda x_k^q$, we have $\bc^\top \bv_k= \max_j{\bc^\top \bv_j}$ and $\lim_{u\rightarrow +\infty}\frac{\bc^\top \bv_j}{\bc^\top \bv_k}=\lim_{u\rightarrow +\infty}\frac{\bc^\top \bv_0}{\bc^\top\bv_0+u}= 0$ for all $j\ne k$. In this example, the lower bound is asymptotically tight.

For fixed $d$, the lower bound in Theorem \ref{thm:ratio1} has the order of $O(\frac{1}{q^d})$.
Theorem \ref{thm:ratio1} recovers the lower bound on the cut-off ratio from \cite{lee_gaining_2020}. 

\begin{cor}[{\cite[Cor. 17]{lee_gaining_2020}}]
For $d=1$, $J=[\ell,u]$ ($u>\ell>0$), and $f(x)=x^q$, with $q>1$, we have
\[
\frac{\vol(\hyperlink{P0fj}{P^0(f,J)} ) -  \vol(\hyperlink{Pfj}{P(f,J)} )}{ \vol(\hyperlink{P0fj}{P^0(f,J)} )}
 \ge \frac{2}{q+4}.
\]
The lower bound becomes tight only as $\ell/u \rightarrow 0$.
\end{cor}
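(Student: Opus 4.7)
The plan is to derive this corollary directly from Theorem \ref{thm:ratio1} by specializing to the univariate setting $d=1$, $f(x)=x^q$, with the simplex $J=[\ell,u]$ written as $J=\conv\{\bv_0,\bv_1\}$ for $\bv_0=\ell$, $\bv_1=u$, and $\bc=1$. Since $u>\ell>0$, the hypotheses $\bc^\top\bv_j\ge 0$ and $q>1$ are automatically satisfied, so Theorem \ref{thm:ratio1} applies.

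The core step is then a Gamma-function simplification of the general bound. Substituting $d=1$ into the denominator $\frac{\Gamma(q+d+2)}{(d+1)!\,\Gamma(q+1)}-(d+2)$ gives
\[
\frac{\Gamma(q+3)}{2\,\Gamma(q+1)}-3 \;=\; \frac{(q+2)(q+1)}{2}-3 \;=\; \frac{q^2+3q-4}{2} \;=\; \frac{(q-1)(q+4)}{2},
\]
where I use $\Gamma(q+3)=(q+2)(q+1)\Gamma(q+1)$. Plugging this back into the bound from Theorem \ref{thm:ratio1} and cancelling the common factor $(q-1)>0$ from numerator and denominator yields
\[
\frac{\vol(P^0(f,J))-\vol(P(f,J))}{\vol(P^0(f,J))} \;\ge\; \frac{q-1}{(q-1)(q+4)/2} \;=\; \frac{2}{q+4},
\]
which is the claimed lower bound.

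For the tightness assertion, I would invoke the final sentence of Theorem \ref{thm:ratio1}: the bound is asymptotically tight precisely when $\bc^\top\bv_j/\bc^\top\bv_k\to 0$ for every $j\ne k$, where $k$ indexes the vertex maximizing $\bc^\top\bv_j$. In the present setting $\bc^\top\bv_0=\ell<u=\bc^\top\bv_1$, so $k=1$ and the only nontrivial ratio is $\bc^\top\bv_0/\bc^\top\bv_1=\ell/u$. Hence the tightness condition reduces exactly to $\ell/u\to 0$, matching the statement. I do not anticipate any real obstacle here; the corollary is essentially a specialization-plus-algebra check, with the only care needed being the correct factoring $q^2+3q-4=(q-1)(q+4)$ so that the $(q-1)$ factor cancels cleanly.
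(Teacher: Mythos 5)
Your proof is correct and is exactly the route the paper takes: the corollary is stated as an immediate specialization of Theorem \ref{thm:ratio1} to $d=1$, $\bc=1$, $\bv_0=\ell$, $\bv_1=u$, and your Gamma-function simplification $\frac{\Gamma(q+3)}{2\Gamma(q+1)}-3=\frac{(q-1)(q+4)}{2}$ together with the cancellation of $(q-1)$ is the only computation needed. The tightness claim likewise follows directly from the tightness condition in Theorem \ref{thm:ratio1}, which here reduces to $\ell/u\rightarrow 0$ as you observe.
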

%


\subsubsection{Case (ii)}~ $f(\bx) = (\bc^\top \bx)^q$ ($q$ is an even integer)
\bigskip 

Suppose that $q$ is an even integer. Similarly, we would prove a lower bound on the ratio between $\int_{J} (\bc^\top \bx)^q d\bx$ and $\sum_{j=0}^d (\bc^\top \bv_j)^q$, aiming at providing a lower bound on the cut-off ratio. 

By Lemma \ref{lem:simplex},
\begin{align*}
    \int_{J} (\bc^\top \bx)^q d\bx = d!\vol(J)\frac{q!}{(q+d)!}h_q(\bc^\top \bv_0,\bc^\top \bv_1,\dots,\bc^\top \bv_d),
\end{align*}
where $h_q(x_1,\dots,x_d) := \sum_{\|\mathbf{k}\|_1=q}x_1^{k_1}\dots x_d^{k_d}$, which is called a complete homogeneous symmetric polynomial \cite{hunter1977positive}. Note that when $J=\Delta_d$, we have 
$$\int_{\Delta_d}(\bc^\top \bx)^q d\bx=\frac{q!}{(q+d)!}h_q(0,c_1,\dots,c_d)=\frac{q!}{(q+d)!}h_q(c_1,\dots,c_d).$$

There are interests in proving even-degree complete homogeneous symmetric polynomials are positive definite (i.e., $h_q(\bx)\ge0$ for all $\bx$, and $h_q(\bx)=0$ if and only if $\bx=\mathbf{0}$)  via different techniques like generating functions, Schur convexity and divided differences \cite{hunter1977positive,tao2017Schur,rovencta2019note}. The integration formula
$$
h_q(c_1,\dots,c_d) = \frac{(q+d)!}{q!}\int_{\Delta_d}(\bc^\top \bx)^q d\bx
$$
would give a simple proof of positive definiteness of $h_q(c_1,\dots,c_d)$ when $q$ is even. This is related to the probability interpretation using i.i.d exponentially distributed random variables mentioned in the comments of the blog \cite{tao2017Schur}:
$$
h_q(c_1,\dots,c_d) = \frac{1}{q!}\int_{\R_{\ge0}^d}(\bc^\top \bx)^q \mathrm{e}^{-\mathbf{1}^\top \bx} d\bx.
$$
The two formulas are connected via a simplification of the multidimensional Laplace form of $f$ (see \cite[Thm. 2.1]{lasserre_simple_2020}).
\begin{lemma}\label{lem:ratio2} 
For $J:=\conv\{\bv_0,\allowbreak \bv_1, \dots,\bv_{d}\}$, if $\bc\ne\mathbf{0}$ and $q$ is an even integer, then
$$
    \int_{J} (\bc^\top \bx)^q d\bx \ge d!\vol(J)\frac{q!}{(q+d)!} \frac{1}{2^{\frac{q}{2}}\left(\frac{q}{2}\right)!}\sum_{j=0}^d(\bc^\top \bv_j)^q.
$$
\end{lemma}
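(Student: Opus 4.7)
The plan is to reduce Lemma~\ref{lem:ratio2} to a pointwise lower bound on the complete homogeneous symmetric polynomial $h_q$ discussed just before the statement. Applying Lemma~\ref{lem:simplex} with $n=q$ yields
\[
\int_{J}(\bc^\top \bx)^q d\bx = d!\,\vol(J)\,\frac{q!}{(q+d)!}\,h_q(a_0,\dots,a_d),\qquad a_j:=\bc^\top \bv_j,
\]
so the assertion of Lemma~\ref{lem:ratio2} is equivalent to showing the real-variable inequality
\[
h_q(a_0,\dots,a_d)\;\ge\;\frac{1}{2^{q/2}(q/2)!}\sum_{j=0}^{d}a_j^{q}
\]
for every even $q$ and every $(a_0,\dots,a_d)\in\mathbb{R}^{d+1}$. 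Writing $p_n:=\sum_j a_j^n$ for the $n$-th power sum and using $(q-1)!!=q!/(2^{q/2}(q/2)!)$ valid for $q$ even, the target becomes $q!\cdot h_q(\mathbf{a}) \ge (q-1)!!\,p_q$.

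My plan is to exploit the probabilistic representation $h_q(\mathbf{a})=\frac{1}{q!}\,\mathbb{E}[(\sum_j a_j X_j)^q]$ with $X_j$ i.i.d.\ $\mathrm{Exp}(1)$, mentioned in the discussion preceding the lemma, and to compare against its Gaussian counterpart. For $Z_j$ i.i.d.\ $N(0,1)$, the linear combination $\sum_j a_j Z_j \sim N(0,p_2)$ has $q$-th moment $(q-1)!!\,p_2^{q/2}$. Combined with the power-mean inequality $p_2^{q/2}\ge p_q$ (valid for $q\ge 2$ even, since $\|\mathbf{a}\|_2\ge\|\mathbf{a}\|_q$), the result would follow as soon as one establishes
\[
\mathbb{E}\Big[\Big(\sum_j a_j X_j\Big)^q\Big]\;\ge\;\mathbb{E}\Big[\Big(\sum_j a_j Z_j\Big)^q\Big].
\]

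This exponential-vs-Gaussian moment comparison is the main obstacle. I would attack it through the moment-cumulant formula: the cumulants of $\sum_j a_j X_j$ are $\kappa_n=(n-1)!\,p_n$ for all $n\ge 1$, while those of $\sum_j a_j Z_j$ vanish except for $\kappa_2=p_2$. Consequently the difference of $q$-th moments equals a sum over set partitions of $\{1,\dots,q\}$ having at least one block of size $\neq 2$; even-sized blocks contribute nonnegative factors $p_{2k}\ge 0$, and the potentially signed contributions from odd-sized blocks (which must occur in pairs since $q$ is even) can be dominated by Cauchy--Schwarz estimates on power sums such as $p_{2k+1}^2\le p_{2k}\,p_{2k+2}$, combined with AM--GM. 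The approach is confirmed in low degree: for $q=2$ the difference equals $p_1^2\ge 0$, and for $q=4$ it equals $p_1^4+6p_1^2 p_2+8 p_1 p_3+6 p_4$, which is nonnegative via $|p_3|\le\sqrt{p_2\,p_4}$ and AM--GM. An alternative route, should the partition bookkeeping prove intricate, is to use the distributional identity $X_j\stackrel{d}{=}\tfrac{1}{2}(W_j^2+V_j^2)$ with $W_j,V_j$ i.i.d.\ $N(0,1)$, thereby recasting $\sum_j a_j X_j$ as a real Gaussian quadratic form and opening access to standard Gaussian-chaos moment bounds.
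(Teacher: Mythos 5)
Your reduction is exactly the paper's: Lemma \ref{lem:simplex} expresses $\int_J(\bc^\top\bx)^q d\bx$ as $d!\vol(J)\frac{q!}{(q+d)!}h_q(a_0,\dots,a_d)$ with $a_j=\bc^\top\bv_j$, and then one combines the inequality $h_q(\mathbf{a})\ge\frac{1}{2^{q/2}(q/2)!}\bigl(\sum_j a_j^2\bigr)^{q/2}$ with the elementary bound $\bigl(\sum_j a_j^2\bigr)^{q/2}\ge\sum_j a_j^q$. Your identification of the Gaussian-moment form of the first inequality (via $(q-1)!!=q!/(2^{q/2}(q/2)!)$ and the representation $h_q(\mathbf{a})=\frac{1}{q!}\mathbb{E}[(\sum_j a_jX_j)^q]$ for i.i.d.\ exponential $X_j$) is correct, and your cumulant computations for $q=2$ and $q=4$ check out.

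The gap is that you never actually prove the step you yourself flag as ``the main obstacle.'' The inequality $h_q(\mathbf{a})\ge\frac{1}{2^{q/2}(q/2)!}\|\mathbf{a}\|_2^{q}$ is precisely Theorem 1 of Hunter (1977), which the paper simply cites; it is the entire nontrivial content of the lemma, everything else being routine. Your plan for the general even $q$ --- expand the moment difference over set partitions and dominate the signed contributions of odd-sized blocks by Cauchy--Schwarz inequalities such as $p_{2k+1}^2\le p_{2k}p_{2k+2}$ together with AM--GM --- is only verified for $q\le 4$, and it is not at all clear that such a termwise or groupwise domination succeeds in general: already at $q=6$ one must control cross terms like $\kappa_5\kappa_1$ and $\kappa_3\kappa_2\kappa_1$ against a limited supply of nonnegative partitions, and the known proofs of Hunter's inequality do not proceed this way (they use generating functions, Schur convexity, or divided differences). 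Your fallback, ``standard Gaussian-chaos moment bounds'' after writing $X_j\stackrel{d}{=}\frac12(W_j^2+V_j^2)$, also does not close the gap, since the standard chaos/hypercontractivity estimates give \emph{upper} bounds on moments, whereas here you need a \emph{lower} bound by the moments of a Gaussian with matching variance. As written, the proposal is a correct reduction plus a conjecture-level sketch of Hunter's theorem; either supply a complete proof of that inequality or cite it.
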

\begin{proof}
Because $q$ is even, \cite[Thm. 1]{hunter1977positive} gives the bound $h_q(c_1,\dots,c_d)\ge \frac{1}{2^{\frac{q}{2}}\left(\frac{q}{2}\right)!}(\sum_{j=1}^d c_j^2)^{\frac{q}{2}}$.
Then, we have 
$$
h_q(c_1,\dots,c_d)\ge\frac{1}{2^{\frac{q}{2}}\left(\frac{q}{2}\right)!}\left(\sum_{j=1}^d c_j^2\right)^{\frac{q}{2}}\ge \frac{1}{2^{\frac{q}{2}}\left(\frac{q}{2}\right)!}\sum_{j=1}^d c_j^q
$$
because $c_j^2\ge0$. Therefore, we obtain the following lower bound 
\begin{align*}
    \int_{J} (\bc^\top \bx)^q d\bx &= d!\vol(J)\frac{q!}{(q+d)!}h_q(\bc^\top \bv_0,\bc^\top \bv_1,\dots,\bc^\top \bv_d)\\
    &\ge d!\vol(J)\frac{q!}{(q+d)!} \frac{1}{2^{\frac{q}{2}}\left(\frac{q}{2}\right)!}\sum_{j=0}^d(\bc^\top \bv_j)^q.
\end{align*}
\qed\end{proof}
It is still an open question whether this bound is tight or not. The bound of \cite{hunter1977positive} is conjectured to be tight and the second inequality is tight. However, the equality conditions are different.
\begin{theorem}\label{thm:ratio2}
Suppose that $f(\bx)=(\bc^\top\bx)^q$ ($\bc\ne\mathbf{0}$) with $q$ an even integer, where the $d$-simplex $J:=\conv\{\bv_0,\allowbreak \bv_1, \dots,\bv_{d}\} \subset \R^d_{\geq 0}\setminus \{ \mathbf{0}\}$. Then
\[
 \frac{\vol(\hyperlink{P0fj}{P^0(f,J)} ) -  \vol(\hyperlink{Pfj}{P(f,J)} )}{ \vol(\hyperlink{P0fj}{P^0(f,J)} )}
    \ge \frac{q-1}{\frac{(q+d+1)!}{q!(d+1)!}2^{\frac{q}{2}}\left(\frac{q}{2}\right)!-(d+2)}.
\]
\end{theorem}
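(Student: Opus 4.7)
The plan is to mirror the structure of the proof of Theorem \ref{thm:ratio1}, substituting Lemma \ref{lem:ratio2} for Lemma \ref{thm:q>1} to handle the case where $\bc^\top\bv_j$ can be negative (permissible because $q$ is an even integer, so $f$ remains convex and nonnegative). First, I would combine Theorem \ref{thm:naivevol_qhomo} with Lemma \ref{lem:naivevol_qhomo}, using the volume identity $\left|\det[\bv_0\;\bv_1\;\cdots\;\bv_d;\; 1\;1\;\cdots\;1]\right|=d!\vol(J)$, to rewrite the cut-off ratio purely in terms of $\vol(J)$, $\sum_{j=0}^d f(\bv_j)$, and $\int_J f(\bx)\,d\bx$, yielding
\begin{align*}
\frac{\vol(P^0(f,J))-\vol(P(f,J))}{\vol(P^0(f,J))}
=\frac{q-1}{\dfrac{(q+d+1)\,\vol(J)\sum_{j=0}^d f(\bv_j)}{(d+1)\int_J f(\bx)\,d\bx}-(d+2)}.
\end{align*}

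Second, I would invoke Lemma \ref{lem:ratio2}, which gives the lower bound
\begin{align*}
\int_J (\bc^\top\bx)^q\,d\bx \;\ge\; d!\,\vol(J)\,\frac{q!}{(q+d)!}\,\frac{1}{2^{q/2}(q/2)!}\sum_{j=0}^d (\bc^\top\bv_j)^q,
\end{align*}
and rearrange it to produce an upper bound on the ratio $\vol(J)\sum_{j=0}^d f(\bv_j)/\int_J f(\bx)\,d\bx$, namely $\frac{(q+d)!}{d!\,q!}\,2^{q/2}(q/2)!$. Substituting this upper bound into the denominator of the ratio formula and simplifying $(q+d+1)\cdot(q+d)!/((d+1)\cdot d!) = (q+d+1)!/((d+1)!)$ yields exactly the stated bound $\dfrac{q-1}{\frac{(q+d+1)!}{q!(d+1)!}2^{q/2}(q/2)!-(d+2)}$.

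The key subtlety here, compared to the proof of Theorem \ref{thm:ratio1}, is that I do not get tightness for free: Lemma \ref{lem:ratio2} is built from two inequalities (the Hunter bound and the $p$-norm inequality $\|\bx\|_2^q\ge\|\bx\|_q^q$ for even $q$) whose equality conditions conflict. So I would note in the proof that while the algebraic manipulation is routine, the question of whether this lower bound can be asymptotically attained is left open, unlike in Theorem \ref{thm:ratio1}. Because Lemma \ref{lem:ratio2} is already stated in the paper, there is no real obstacle — the proof collapses to an algebraic substitution — but I would double-check that the substitution of $(q+d+1)!/((d+1)!q!)$ agrees with the factor $(q+d+1)(q+d)!/((d+1)d!q!)$ coming out of the combination, and include the remark about (non-)tightness to contrast with the previous theorem.
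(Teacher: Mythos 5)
Your proposal is correct and follows essentially the same route as the paper: express the cut-off ratio via the $q$-homogeneous volume formulas (Lemma \ref{lem:naivevol_qhomo} and Theorem \ref{thm:naivevol_qhomo}), then bound the resulting denominator using Lemma \ref{lem:ratio2}; the algebraic simplification $(q+d+1)(q+d)!/((d+1)\,d!)=(q+d+1)!/(d+1)!$ is exactly what the paper does. Your added caveat about the unknown tightness of the bound matches the paper's own remark following Lemma \ref{lem:ratio2}.
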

\begin{proof}
By Theorem \ref{thm:naivevol}, Theorem \ref{thm:naivevol_qhomo}  and Lemma \ref{lem:ratio2},
\begin{align*}
  \frac{\vol(\hyperlink{P0fj}{P^0(f,J)} ) -  \vol(\hyperlink{Pfj}{P(f,J)} )}{ \vol(\hyperlink{P0fj}{P^0(f,J)} )}
    & =\frac{q-1}{d+2}\cdot\frac{\int_{J}f(\bx)d\bx}{\frac{(q+d+1)\vol(J)}{(d+2)(d+1)}\sum_{j=0}^d f(\bv_j)-\int_{J}f(\bx)d\bx}\\
    & \ge \frac{q-1}{\frac{(q+d+1)!}{q!(d+1)!}2^{\frac{q}{2}}\left(\frac{q}{2}\right)!-(d+2)}.
\end{align*}
\qed\end{proof}

 For fixed $d$, the lower bound in Theorem \ref{thm:ratio2} has the order of $O\left(\frac{1}{2^{\frac{q}{2}}\left(\frac{q}{2}\right)! q^d}\right)$.
We can improve the coefficient $\gamma$ from $\frac{1}{2^{\frac{q}{2}}\left(\frac{q}{2}\right)!}$ in the inequality $h_q(x_1,\dots,x_d)\ge \gamma \sum_{j=1}^d x_j^q$ for some special cases, and thus improve the lower bound in Theorem \ref{thm:ratio2}.

\begin{proposition}\label{prop:ratio2_improve}
Suppose $h_q(x_1,\dots,x_d) := \sum_{\|\mathbf{k}\|_1=q}x_1^{k_1}\dots x_d^{k_d}$ is the complete homogeneous symmetric polynomial of even degree $q$ with $d$ variables.
In the following three cases (a) $q=2$, or (b) $d=2$, or (c) $d=3$ and $q=4$, we have
$$
    h_q(x_1,\dots,x_d)\ge \frac12 \sum_{j=1}^d x_j^q.
$$
\end{proposition}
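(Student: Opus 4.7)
The plan is to exhibit, in each of the three cases, an explicit identity of the form $h_q - \tfrac{1}{2} p_q = \tfrac{1}{2} p_1^2 \cdot Q$ where $p_1 = \sum_j x_j$, $p_q = \sum_j x_j^q$, and $Q$ is a manifestly nonnegative polynomial. The three cases differ in what $Q$ turns out to be, and the common feature is that $h_q - \tfrac{1}{2}p_q$ is divisible by $p_1^2$ in each setting, reflecting that the inequality becomes tight on $\{p_1 = 0\}$ (with a double vanishing).

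Case (a) ($q = 2$, arbitrary $d$) reduces immediately to Newton's identity
\[
h_2(x_1,\dots,x_d) = \tfrac{1}{2}\bigl(p_1^2 + p_2\bigr),
\]
so $h_2 - \tfrac{1}{2}p_2 = \tfrac{1}{2} p_1^2 \geq 0$. This is a one-line argument.

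Case (b) ($d = 2$, $q$ even) is the most intricate. Starting from the closed form $h_q(x,y) = (x^{q+1} - y^{q+1})/(x - y)$, I would cross-multiply to derive the polynomial identity
\[
2 h_q(x,y) - (x^q + y^q) = (x + y)\, h_{q-1}(x,y),
\]
valid for all $x,y$ (not only $x \neq y$). Next, since $q$ is even, the factorization $x^q - y^q = (x - y)(x + y)\sum_{j=0}^{q/2 - 1} x^{q-2-2j}\, y^{2j}$ yields
\[
h_{q-1}(x,y) = (x+y)\sum_{j=0}^{q/2 - 1} x^{q-2-2j}\, y^{2j}.
\]
Combining these two identities gives
\[
h_q(x,y) - \tfrac{1}{2}(x^q + y^q) = \tfrac{1}{2}(x+y)^2 \sum_{j=0}^{q/2 - 1} x^{q-2-2j}\, y^{2j},
\]
and since all exponents in the inner sum are even, the right-hand side is a sum of squares times $\tfrac{1}{2}(x+y)^2$, hence nonnegative. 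The main work here is the two algebraic manipulations; the hard part is recognizing that $h_{q-1}$ carries a factor of $(x+y)$ for $q$ even so that one gets $(x+y)^2$ overall.

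Case (c) ($d = 3$, $q = 4$) I would handle by guessing and then verifying the clean identity
\[
h_4(x,y,z) - \tfrac{1}{2}(x^4 + y^4 + z^4) = \tfrac{1}{2}(x+y+z)^2 (x^2 + y^2 + z^2).
\]
The motivation for this guess is that (i) by the same Newton-identity calculation as in case (a), substituting $z = -(x+y)$ into $h_4$ yields $p_2^2/4 = \tfrac{1}{2}p_4$, so $p_1$ divides the left-hand side, and (ii) numerical checks at a few symmetric points (e.g.\ $(1,1,1)$, $(1,1,0)$, $(2,1,0)$) pin down the quotient as $\tfrac{1}{2}p_1 \cdot p_2$. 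Verification is a routine expansion of both sides, and nonnegativity of the right-hand side is obvious.

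The main obstacle is case (b), where some care is needed to turn the rational expression for $h_q(x,y)$ into a polynomial identity with a $(x+y)^2$ factor; cases (a) and (c) are essentially bookkeeping once the right SOS-type decompositions are written down.
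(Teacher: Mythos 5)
Your proof is correct. Cases (a) and (c) coincide with the paper's: the identity $h_2=\tfrac12 p_2+\tfrac12 p_1^2$ and the expansion $h_4(x_1,x_2,x_3)-\tfrac12\sum_j x_j^4=\tfrac12(x_1+x_2+x_3)^2(x_1^2+x_2^2+x_3^2)$ are exactly what the paper uses. Case (b) is where you genuinely diverge. The paper reduces to $x_1>0>x_2$, sets $t:=-x_2/x_1$, and minimizes the ratio $R(t)=\frac{1+t^{q+1}}{(1+t)(1+t^q)}$ by a derivative computation, concluding $\min R=R(1)=\tfrac12$. You instead prove the polynomial identity $2h_q(x,y)-(x^q+y^q)=(x+y)\,h_{q-1}(x,y)$ (which checks out by telescoping the two shifted sums) and then observe that for $q$ even, $h_{q-1}(x,y)=(x+y)\sum_{j=0}^{q/2-1}x^{q-2-2j}y^{2j}$ with all exponents even, yielding $h_q-\tfrac12(x^q+y^q)=\tfrac12(x+y)^2\sum_j\bigl(x^{(q-2-2j)/2}y^{j}\bigr)^2\ge 0$. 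I verified both identities; they are correct as polynomial identities (no division by $x-y$ is actually needed once they are stated in summation form). Your route buys an explicit sum-of-squares certificate, avoids the sign case-split and the calculus entirely, makes the tightness locus $x_1+x_2=0$ transparent, and unifies all three cases under the single template $h_q-\tfrac12 p_q=\tfrac12 p_1^2\cdot Q$ with $Q$ manifestly nonnegative; the paper's calculus argument, on the other hand, directly locates the exact minimizer $t=1$ of the ratio, which is slightly more information than nonnegativity of the gap.
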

\begin{proof}
(a) If $q=2$, then the bound is $h_2(x_1,x_2,\dots,x_d)\ge\frac12\sum_{j=1}^d x_j^q$ and the bound is tight when $\sum_{j=1}^d x_j=0$. The result follows from $h_2(x_1,x_2,\dots,x_d) = \frac12\sum_{j=1}^d x_j^q + \frac12(\sum_{j=1}^d x_j)^2$.

(b) If $d=2$, then the bound is $h_q(x_1,x_2)\ge\frac12(x_1^q+ x_2^q)$ and the bound is tight when $x_1+x_2=0$.
Because $c\ge0$ implies $h_q(x_1,x_2)\ge (x_1^q+ x_2^q)$, we consider the case $x_1>0>x_2$.
\begin{align*}
    \frac{h_q(x_1,x_2)}{x_1^q+x_2^q} = \frac{x_1^{q+1} - x_2^{q+1}}{(x_1-x_2)(x_1^q+x_2^q)}:= R(t) = \frac{1+t^{q+1}}{(1+t)(1+t^q)},
\end{align*}
where $t:=-\frac{x_2}{x_1}>0$. We have
\begin{align*}
 &(1+t)^2 (1+t^q)^2 R'(t)\\
  &\quad = (q+1)t^q(1+t)(1+t^q) -(1+t^{q+1})(1+qt^{q-1}+(q+1)t^q)\\
    &\quad  =t^{2q}+ q t^{q+1} - qt^{q-1} - 1 ~=~ t^q\left(t^q-\frac{1}{t^q} +q (t-\frac{1}{t})\right).
\end{align*}
Because $t^q -\frac{1}{t^q}$, $t-\frac{1}{t}$ is increasing on $(0,\infty)$ and obtain $0$ if and only if $t=1$. Therefore, we know that $R(t)$ is decreasing on $(0,1]$ and increasing on $[1,\infty)$, which implies $\min R(t) = R(1) =\frac{1}{2}$.

(c) If $d=3$ and $q=4$, then we have $h_4(x_1,x_2,x_3)\ge \frac12(x_1^4+x_2^4+x_3^4)$ because
$$
h_4(x_1,x_2,x_3)- \frac12(x_1^4+x_2^4+x_3^4)=\frac12(x_1+x_2+x_3)^2(x_1^2+x_2^2+x_3^2).
$$
\qed\end{proof}
\begin{remark}
The inequality of Proposition \ref{prop:ratio2_improve} does not hold for other cases because
\begin{itemize}
    \item[(i)] $h_6(1,1,-2)/(1^6+1^6+(-2)^6) =31/66\approx 0.4697<1/2$;
    \item[(ii)]  $h_4(0.3577,0.3577,0.3577,-0.9875)/(c_1^4+c_2^4+c_3^4+c_4^4) \approx 0.4598<1/2$.
\end{itemize}
\end{remark}

\subsection{A class of exponential functions}

In this subsection, we present a class of exponential functions and simplices such that the cut-off ratio 
asymptotically goes to 0.
We consider the simplex $J:=\conv\{\bv_0,\bv_1,\dots,\bv_d\}$, and the exponential function $f(\bx) : = \mathrm{e}^{\bc^\top \bx} -1$, where $\bc^\top$ satisfies $\bc^\top \bv_0 - \bc^\top \bv_j = u\ne 0$ for $j=1,\dots,d$.
In other word, the exponential function $f(\bx)=\mathrm{e}^{\bc^\top \bx} -1$ has exactly two distinct values when evaluated at the vertices of the simplex: one for the vertex $v_0$, and another for all the other vertices.
Because this function $f(\bx)$ is not $q$-homogenenous, we need a new theorem (Theorem \ref{thm:naive_exp}) to compute the volume of the perspective and na\"{i}ve relaxations.
\begin{theorem}\label{thm:naive_exp}
Suppose that $J:=\conv\{\bv_0,\bv_1,\dots,\bv_d\}\subset \R^d_{\geq 0}\setminus \{ \mathbf{0}\}$, $f(\bx): = \mathrm{e}^{\bc^\top \bx} -1$ and $\bc$ satisfies $\bc^\top \bv_0 - \bc^\top \bv_j = u\ne 0$, for $j=1,\dots,d$. Then,
\begin{align*}
  \vol(\hyperlink{Pfj}{P(f,J)} ) &=\frac{d!\vol(J)}{(d+2)!}(\mathrm{e}^{\bc^\top \bv_0} + d\mathrm{e}^{\bc^\top \bv_0 - u})-\frac{d!\vol(J)}{d+2} \frac{\mathrm{e}^{\bc^\top \bv_0-u}}{u^d}\left(\mathrm{e}^u - \sum_{j=0}^{d-1}\frac{u^j}{j!}\right),
\end{align*}
and
\begin{align*}
   &\!\! \vol(\hyperlink{P0fj}{P^0(f,J)} )  =\\
   &\quad\frac{d!\vol(J)}{(d+2)!}(\mathrm{e}^{\bc^\top \bv_0} + d\mathrm{e}^{\bc^\top \bv_0 - u}+1)-d!\vol(J)\frac{\mathrm{e}^{\bc^\top \bv_0-u}}{(\bc^\top \bv_0)u^d}\left(\mathrm{e}^u - \sum_{j=0}^{d-1}\frac{u^j}{j!}\right)\\
    &\qquad + d!\vol(J)\frac{\mathrm{e}^{\bc^\top \bv_0-u}}{(\bc^\top \bv_0)[-(\bc^\top \bv_0-u)]^d}\left(\mathrm{e}^{-(\bc^\top \bv_0 - u)}-\sum_{j=0}^{d-1}\frac{[-(\bc^\top \bv_0-u)]^j}{j!}\right).
\end{align*}
\end{theorem}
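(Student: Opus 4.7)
The plan is to treat the two volume formulas separately, each by invoking the general volume formulas (Theorem \ref{thm:perspecvol} and Theorem \ref{thm:naivevol}) and then performing the exponential integrations using Corollary \ref{cor:fourier_2_value}.

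For $\vol(P(f,J))$, the approach is to just plug in to Theorem \ref{thm:perspecvol}. Under the hypothesis $\bc^\top\bv_0 - \bc^\top\bv_j = u$ for $j=1,\dots,d$, we have $\sum_{j=0}^d f(\bv_j) = e^{\bc^\top\bv_0} + d\, e^{\bc^\top\bv_0 - u} - (d+1)$, and $\int_J f(\bx)\,d\bx = \int_J e^{\bc^\top\bx}\,d\bx - \vol(J)$, where the exponential integral is evaluated by Corollary \ref{cor:fourier_2_value}. One then checks that the constant $-(d+1)$ from the sum combines with the $-\vol(J)$ term (after multiplication by $\tfrac{1}{d+2}$) to cancel out, because $\tfrac{(d+1)d!}{(d+2)!} = \tfrac{1}{d+2}$. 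This leaves precisely the stated formula for $\vol(P(f,J))$.

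For $\vol(P^0(f,J))$, I would start from Theorem \ref{thm:naivevol}, so that the task reduces to evaluating $\int_0^1 z^d\int_J f(z\bx)\,d\bx\,dz$. Splitting $f(z\bx) = e^{z\bc^\top\bx} - 1$ peels off a trivial $-\vol(J)/(d+1)$ term, which combines with the $-(d+1)$ in $\sum f(\bv_j)$ exactly as in the previous paragraph to yield the $+1$ in $(e^{\bc^\top\bv_0} + d\,e^{\bc^\top\bv_0-u}+1)$. The essential work is to evaluate
\begin{equation*}
\int_0^1 z^d\int_J e^{z\bc^\top\bx}\,d\bx\,dz
= \frac{d!\vol(J)}{u^d}\int_0^1 e^{za}\!\left(e^{zu} - \sum_{j=0}^{d-1}\frac{(zu)^j}{j!}\right)\!dz,
\end{equation*}
where I have written $a := \bc^\top\bv_0 - u$ and applied Corollary \ref{cor:fourier_2_value} to the inner integral with $\bc$ replaced by $z\bc$ (so the parameter $u$ becomes $zu$ and the factor $z^d$ in the denominator cancels the outer $z^d$).

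The main obstacle is the remaining one-dimensional integral $I := \int_0^1 e^{za} g(zu)\,dz$, where $g(w) := e^w - \sum_{j=0}^{d-1} w^j/j!$. My plan is to perform integration by parts, exploiting the identity $g'(w) = g(w) + w^{d-1}/(d-1)!$ and the fact $g(0)=0$, which after rearrangement (using $a+u = \bc^\top\bv_0$) yields
\begin{equation*}
I = \frac{e^a g(u)}{\bc^\top\bv_0} - \frac{u^d}{(\bc^\top\bv_0)(d-1)!}\int_0^1 z^{d-1} e^{za}\,dz.
\end{equation*}
The final step is to recognize the remaining integral via the Taylor integral remainder: $g(w) = \frac{w^d}{(d-1)!}\int_0^1 (1-s)^{d-1} e^{ws}\,ds$ with $w=-a$ and the substitution $t=1-s$ gives $\int_0^1 z^{d-1} e^{za}\,dz = \frac{(d-1)! e^a g(-a)}{(-a)^d}$. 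Assembling these pieces and unwinding the notation $a = \bc^\top\bv_0 - u$ produces the two-term tail in the stated formula for $\vol(P^0(f,J))$.
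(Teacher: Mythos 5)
Your proposal is correct, and its overall skeleton coincides with the paper's: both compute $\vol(P(f,J))$ by plugging $\sum_j f(\bv_j)$ and $\int_J \mathrm{e}^{\bc^\top\bx}d\bx$ (via Corollary \ref{cor:fourier_2_value}) into Theorem \ref{thm:perspecvol}, and both reduce $\vol(P^0(f,J))$ via Theorem \ref{thm:naivevol} to the one-dimensional integral $\int_0^1 \mathrm{e}^{za}\bigl(\mathrm{e}^{zu}-\sum_{j=0}^{d-1}(zu)^j/j!\bigr)dz$ with $a=\bc^\top\bv_0-u$, after noting that the $z^d$ from Corollary \ref{cor:fourier_2_value} applied to $z\bc$ cancels the outer $z^d$. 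Where you genuinely diverge is in how that last integral is evaluated. The paper expands it term by term, computes each $\int_0^1 z^j\mathrm{e}^{za}dz$ from the recursion in the proof of Corollary \ref{cor:exp_all_one}, and then performs a double-sum interchange and telescoping to reassemble the closed form; this is mechanical but bookkeeping-heavy. You instead exploit the structure of the Taylor tail $g(w)=\mathrm{e}^w-\sum_{j=0}^{d-1}w^j/j!$: the identity $g'(w)=g(w)+w^{d-1}/(d-1)!$ together with $g(0)=0$ turns a single integration by parts into the exact two-term decomposition (using $a+u=\bc^\top\bv_0$), and the integral form of the Taylor remainder identifies the leftover $\int_0^1 z^{d-1}\mathrm{e}^{za}dz$ as $(d-1)!\,\mathrm{e}^a g(-a)/(-a)^d$. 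I checked both steps and the final assembly (including the constant bookkeeping $-\tfrac{1}{d+2}+\tfrac{1}{d+1}=\tfrac{1}{(d+1)(d+2)}$, which produces the ``$+1$''; note this differs slightly from the perspective case, where the constants cancel to zero rather than combine, so ``exactly as in the previous paragraph'' is a small overstatement). Your route avoids the double-sum manipulation entirely and makes the appearance of the two tail terms conceptually transparent; the paper's route has the advantage of reusing only machinery already displayed in Section \ref{subsec:other}, but yours is arguably cleaner. Both implicitly use $\bc^\top\bv_0\neq 0$ and $\bc^\top\bv_0\neq u$, which the stated formula already presupposes.
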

\begin{proof}
\begin{align*}
&\left|\det\begin{bmatrix}
        \bv_0 & \bv_1 & \dots & \bv_d\\
        1   &  1  & \dots & 1
    \end{bmatrix}\right|
    \sum_{j=0}^d f(\bv_j) ~=~ d!\vol(J)\sum_{j=0}^d f(\bv_j) \\
    &\quad = d!\vol(J) (\mathrm{e}^{\bc^\top \bv_0} + d \mathrm{e}^{\bc^\top \bv_0 - u}- (d+1)).
\end{align*}
By Corollary \ref{cor:fourier_2_value}, we obtain
$$
\int_{J} f(\bx) d\bx = d!\vol(J) \frac{\mathrm{e}^{\bc^\top \bv_0-u}}{u^d}\left(\mathrm{e}^u - \sum_{j=0}^{d-1}\frac{u^j}{j!}\right)- \vol(J).
$$
By Theorem \ref{thm:perspecvol}, we obtain the volume of the perspective relaxation.
By Theorem \ref{thm:naivevol}, we only need to compute $\int_0^1z^d \int_{J}f(z\bx)d\bx dz$ to calculate the volume of the na\"ive relaxation.
We use Corollary \ref{cor:fourier_2_value} and obtain
\begin{align*}
    &\int_0^1z^d \int_{J}f(z\bx)d\bx dz\\
    &\quad= \int_{0}^1 z^d \int_{J} \mathrm{e}^{z\cdot(\bc^\top \bx)} d \bx dz - \frac{\vol(J)}{d+1}\\
    &\quad = d!\vol(J)\int_0^1 z^d  \frac{\mathrm{e}^{z\cdot (\bc^\top \bv_0-u)}}{(zu)^d}\left(\mathrm{e}^{zu} - \sum_{j=0}^{d-1}\frac{{(zu)}^j}{j!}\right) dz- \frac{\vol(J)}{d+1}\\
    &\quad = d!\vol(J)\frac{1}{u^d}\int_0^1 \left(\mathrm{e}^{z\cdot (\bc^\top \bv_0)} - \mathrm{e}^{z\cdot (\bc^\top \bv_0-u)}\sum_{j=0}^{d-1}\frac{{(zu)}^j}{j!}\right) dz- \frac{\vol(J)}{d+1}\\
    &\quad = d!\vol(J)\frac{1}{u^d}\left(\frac{\mathrm{e}^{\bc^\top \bv_0}-1}{\bc^\top \bv_0} -\sum_{j=0}^{d-1}\frac{u^j}{j!}\int_0^1 z^j\mathrm{e}^{z\cdot (\bc^\top \bv_0-u)} dz\right) -\frac{\vol(J)}{d+1}.
\end{align*}
Let $I(j) := \int_0^1 z^{j} \mathrm{e}^{z\cdot (\bc^\top \bv_0-u)}dz$, using  integration by parts, we have
\begin{align*}
I(j+1) &= \frac{1}{\bc^\top \bv_0-u}\int_0^1 z^{j+1} d (\mathrm{e}^{(\bc^\top \bv_0-u)z}) = \left.\frac{1}{\bc^\top \bv_0-u} z^{j+1} \mathrm{e}^{(\bc^\top \bv_0-u)z}\right|_0^1 - \frac{j+1}{\bc^\top \bv_0-u} I(j)\\
&=\frac{\mathrm{e}^{\bc^\top \bv_0-u}}{\bc^\top \bv_0-u} - \frac{j+1}{\bc^\top \bv_0-u} I(j).
\end{align*}
Solving this recursive equation with $I(0)=\frac{1-\mathrm{e}^{\bc^\top \bv_0-u}}{-(\bc^\top \bv_0-u)}$, we obtain
$$
\frac{[-(\bc^\top \bv_0-u)]^{j+1}}{j!}I(j) = 1 - \sum_{k=0}^{j} \frac{[-(\bc^\top \bv_0-u)]^{k}\mathrm{e}^{\bc^\top \bv_0-u}}{k!}.
$$
Thus,
\begin{align*}
    & \quad\frac{\mathrm{e}^{\bc^\top \bv_0}-1}{\bc^\top \bv_0} -\sum_{j=0}^{d-1}\frac{u^j}{j!}\int_0^1 z^j\mathrm{e}^{z\cdot (\bc^\top \bv_0-u)} dz = \frac{\mathrm{e}^{\bc^\top \bv_0}-1}{\bc^\top \bv_0} -\sum_{j=0}^{d-1}\frac{u^j}{j!}I(j)\\
    &= \frac{\mathrm{e}^{\bc^\top \bv_0}-1}{\bc^\top \bv_0} -\sum_{j=0}^{d-1}u^j \frac{1}{[-(\bc^\top \bv_0-u)]^{j+1}}\left(1-\sum_{\ell=0}^{j}\frac{[-(\bc^\top \bv_0-u)]^{\ell}\mathrm{e}^{(\bc^\top \bv_0-u)}}{\ell!}\right)\\
    & = \frac{\mathrm{e}^{\bc^\top \bv_0}-1}{\bc^\top \bv_0} -\sum_{j=0}^{d-1}\frac{u^j}{[-(\bc^\top \bv_0-u)]^{j+1}} + \mathrm{e}^{(\bc^\top \bv_0-u)}\sum_{\ell=0}^{d-1}\frac{[-(\bc^\top \bv_0-u)]^{\ell}}{\ell!}\sum_{j=\ell}^{d-1}\frac{u^j}{[-(\bc^\top \bv_0-u)]^{j+1}}\\
    & = \frac{\mathrm{e}^{\bc^\top \bv_0}-1}{\bc^\top \bv_0} +\frac{1-u^d[-(\bc^\top \bv_0-u)]^{-d}}{\bc^\top \bv_0} - \mathrm{e}^{(\bc^\top \bv_0-u)}\sum_{\ell=0}^{d-1}\frac{u^\ell - u^d[-(\bc^\top \bv_0-u)]^{\ell-d}}{\ell!(\bc^\top\bv_0)}\\
    & =\frac{\mathrm{e}^{\bc^\top \bv_0 - u}}{\bc^\top \bv_0}\left(\mathrm{e}^u-\sum_{j=0}^{d-1}\frac{u^j}{j!}\right)  -\frac{u^d \mathrm{e}^{\bc^\top \bv_0-u}}{(\bc^\top \bv_0)[-(\bc^\top \bv_0-u)]^d}\left(\mathrm{e}^{-(\bc^\top \bv_0 - u)}-\sum_{j=0}^{d-1}\frac{[-(\bc^\top \bv_0-u)]^j}{j!}\right),
\end{align*}
where the second last equality follows from the geometric series. Therefore, we obtain the formula for $ \vol(\hyperlink{P0fj}{P^0(f,J)} ) $.
\qed \end{proof}

Next, we present two families of simplices for the exponential function $f(\bx) = \mathrm{e}^{\mathbf{1}^\top \bx} - 1$ such that the 
cut-off ratio 
asymptotically goes to 0, and we establish the rate of convergence for each by Theorem \ref{thm:naive_exp}.
\begin{theorem}\label{thm:exp_ratio}
(a) Suppose  that $J := \{\bx:~\bx\le ku\mathbf{1},~\|\bx-ku\mathbf{1}\|\le u\}=\conv\{\bv_0,\bv_0-u\be_1,\dots,\bv_0-u\be_d\}$, where $\bv_0:=ku\mathbf{1}\in \R_{>0}^d$, and $f(\bx) := \mathrm{e}^{\mathbf{1}^\top \bx} - 1$. Then,
$$
    \lim_{u\rightarrow \infty}u^d\cdot 
\frac{\vol(\hyperlink{P0fj}{P^0(f,J)} ) -  \vol(\hyperlink{Pfj}{P(f,J)} )}{ \vol(\hyperlink{P0fj}{P^0(f,J)} )}
    = (d+1)!
$$
(b) Suppose $J := \conv\{\bv_0,\bv_0 + u\be_1,\dots,\bv_0+u \be_d\}$, where $\mathbf{0}\neq \bv_0\in\R^d_{\ge0}$, and $f(x) = \mathrm{e}^{\mathbf{1}^\top \bx} -1$. Suppose that $\bv_0$ is fixed and $u$ tends to $\infty$. Then,
$$
    \lim_{u\rightarrow \infty}u\cdot 
\frac{\vol(\hyperlink{P0fj}{P^0(f,J)} ) -  \vol(\hyperlink{Pfj}{P(f,J)} )}{ \vol(\hyperlink{P0fj}{P^0(f,J)} )}
    = d+1.
$$
\end{theorem}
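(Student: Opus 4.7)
The plan is to substitute each configuration into Theorem~\ref{thm:naive_exp} and perform a careful asymptotic analysis as $u\to\infty$. In both settings $\bc=\mathbf{1}$ and $J$ is an axis-aligned right $d$-simplex with legs of length $u$, so $\vol(J)=u^d/d!$ and hence $d!\,\vol(J)=u^d$. The parameter ``$u$'' appearing in the hypothesis of Theorem~\ref{thm:naive_exp} equals $+u$ in part~(a), where $\bc^\top\bv_0-\bc^\top\bv_j=u$, and equals $-u$ in part~(b), where $\bc^\top\bv_0-\bc^\top\bv_j=-u$. In part~(a), $\bc^\top\bv_0=kud$ grows linearly in $u$, so the dominant exponential is $\mathrm{e}^{kud}$; in part~(b), $\bc^\top\bv_0=\mathbf{1}^\top\bv_0=:\sigma$ is a fixed positive constant while the shifted quantity $\sigma+u\to\infty$, so the dominant exponential is $\mathrm{e}^{\sigma+u}$.

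For part~(a), I would expand via Theorem~\ref{thm:naive_exp} and use the identity $\mathrm{e}^{\bc^\top\bv_0-u}\mathrm{e}^u=\mathrm{e}^{\bc^\top\bv_0}$ together with the fact that $\mathrm{e}^{-u}\sum_{j=0}^{d-1}u^j/j!\to 0$ to show that both $\vol(P(f,J))$ and $\vol(P^0(f,J))$ are asymptotic to $u^d\mathrm{e}^{kud}/(d+2)!$; the extra ``third piece'' in $\vol(P^0)$ is only of order $u^{d-2}\mathrm{e}^{(kd-1)u}$, which is exponentially smaller than $u^d\mathrm{e}^{kud}$ because $\mathrm{e}^{(kd-1)u}/\mathrm{e}^{kud}=\mathrm{e}^{-u}$. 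The leading pieces cancel in the difference, and the surviving dominant contribution comes from the second piece with its shifted coefficient $[1/(d+2)-1/(kud)]\cdot u^d$, giving $\vol(P^0)-\vol(P)\sim\mathrm{e}^{kud}/(d+2)$. Therefore $u^d$ times the cut-off ratio tends to $(d+2)!/(d+2)=(d+1)!$.

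For part~(b), the key auxiliary asymptotic will be
\[
\frac{\mathrm{e}^{\sigma+u}}{(-u)^d}\Bigl(\mathrm{e}^{-u}-\sum_{j=0}^{d-1}\frac{(-u)^j}{j!}\Bigr)\sim \frac{\mathrm{e}^{\sigma+u}}{(d-1)!\,u},
\]
since $\mathrm{e}^{-u}$ is exponentially small and the $j=d-1$ summand dominates. Using this, both $\vol(P)$ and $\vol(P^0)$ are asymptotic to $du^d\mathrm{e}^{\sigma+u}/(d+2)!$. The delicate point in computing $\vol(P^0)-\vol(P)$ is that the coefficient shift in the second piece yields a $[1/(d+2)-1/\sigma]\cdot u^{d-1}\mathrm{e}^{\sigma+u}/(d-1)!$ contribution, while the third piece of $\vol(P^0)$ asymptotically equals $+u^{d-1}\mathrm{e}^{\sigma+u}/(\sigma(d-1)!)$; the two $1/\sigma$ contributions cancel exactly, leaving $\vol(P^0)-\vol(P)\sim u^{d-1}\mathrm{e}^{\sigma+u}/((d+2)(d-1)!)$. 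Multiplying by $u$ and dividing by $\vol(P^0)$ then gives $(d+2)!/((d+2)(d-1)!\,d)=d+1$.

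The main technical obstacle is the careful bookkeeping of subleading terms: the leading asymptotics of $\vol(P^0)$ and $\vol(P)$ coincide, so the cut-off ratio is governed by how near-cancelling quantities differ. In particular in part~(b), verifying the precise cancellation of the $1/\sigma$ contributions between the second and third pieces of $\vol(P^0)$ is what makes the limit independent of the fixed vertex $\bv_0$; this cancellation is the one place where the computation cannot be shortcircuited by crude estimates.
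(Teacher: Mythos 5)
Your proposal is correct and takes essentially the same route as the paper: substitute each configuration into Theorem~\ref{thm:naive_exp} (with that theorem's parameter equal to $+u$ in part~(a) and $-u$ in part~(b)) and extract the leading asymptotics, obtaining $\vol(P^0)-\vol(P)\sim \mathrm{e}^{kdu}/(d+2)$ against $\vol(P^0)\sim u^d\mathrm{e}^{kdu}/(d+2)!$ in (a) and $\vol(P^0)-\vol(P)\sim u^{d-1}\mathrm{e}^{\sigma+u}/((d+2)(d-1)!)$ against $\vol(P^0)\sim d\,u^d\mathrm{e}^{\sigma+u}/(d+2)!$ in (b), exactly as in the paper's (much terser) proof. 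Your bookkeeping of the near-cancelling $1/\sigma$ contributions in part~(b) is sound (they agree to order $u^{d-1}\mathrm{e}^{\sigma+u}$, which is all that is needed since the surviving term is of that same order), and in fact your write-up supplies more justification than the published argument.
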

\begin{proof}
(a) Because $\mathbf{1}^\top\bv_0 = kdu$, $\mathbf{1}^\top (\bv_0-u\be_j) = (kd-1)u$, by Theorem \ref{thm:naive_exp}, we collect the highest-order term as $u$ tends to infinity and obtain,
\begin{align*}
 \vol(\hyperlink{P0fj}{P^0(f,J)} ) -  \vol(\hyperlink{Pfj}{P(f,J)} )\sim \frac{d!\vol(J)}{d+2}\frac{\mathrm{e}^{kdu}}{u^d}& \\
 \hyperlink{P0fj}{P^0(f,J)} \sim \frac{d!\vol(J)}{(d+2)!}\mathrm{e}^{kdu}& \\
    \lim_{u\rightarrow \infty}u^d\cdot 
\frac{\vol(\hyperlink{P0fj}{P^0(f,J)} ) -  \vol(\hyperlink{Pfj}{P(f,J)} )}{ \vol(\hyperlink{P0fj}{P^0(f,J)} )}
    &= (d+1)!
\end{align*}
(b) Because $\mathbf{1}^\top (\bv_0+u\be_j) = \mathbf{1}^\top\bv_0 + u$, by Theorem \ref{thm:naive_exp}, we collect the highest-order term as $u$ tends to infinity and obtain,
\begin{align*}
    \vol(\hyperlink{P0fj}{P^0(f,J)} ) -  \vol(\hyperlink{Pfj}{P(f,J)} ) \sim \frac{d!\vol(J)}{d+2}\frac{\mathrm{e}^{\mathbf{1}^\top \bv_0}}{(d-1)!}\frac{\mathrm{e}^u}{u}&\\ 
    \vol(\hyperlink{P0fj}{P^0(f,J)} ) \sim \frac{d!\vol(J)}{(d+2)!}d\mathrm{e}^{\mathbf{1}^\top \bv_0} \mathrm{e}^u &\\
    \lim_{u\rightarrow \infty}u\cdot 
\frac{\vol(\hyperlink{P0fj}{P^0(f,J)} ) -  \vol(\hyperlink{Pfj}{P(f,J)} )}{ \vol(\hyperlink{P0fj}{P^0(f,J)} )}
    &= d+1.
\end{align*}
\qed\end{proof}
Theorem \ref{thm:exp_ratio}(a) recovers the following key result of \cite{lee_gaining_2020}. 

\begin{cor}[{\cite[Cor. 6]{lee_gaining_2020}}]
For $d=1$, $J:=[\ell,u]$ ($u>\ell>0$), and $f(x):=e^x-1$. Let $\ell:=k u$ for some fixed $k\in(0,1)$, then we have
\[
 \lim_{u\rightarrow \infty} u\cdot 
\frac{\vol(\hyperlink{P0fj}{P^0(f,J)} ) -  \vol(\hyperlink{Pfj}{P(f,J)} )}{ \vol(\hyperlink{P0fj}{P^0(f,J)} )}
 =\frac{2}{1-k}.
\]
\end{cor}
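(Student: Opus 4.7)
The plan is to reduce this one-dimensional corollary directly to the $d=1$ instance of Theorem \ref{thm:exp_ratio}(a) via a reparametrization. Because the symbol $u$ is overloaded (it is the width parameter of the simplex in the theorem and the upper endpoint of the interval in the corollary), I will write $u^\star$ and $k^\star$ for the parameters that play the roles of $u$ and $k$ in Theorem \ref{thm:exp_ratio}(a). Specialized to $d=1$, that theorem concerns the simplex $J_\star=[\bv_0-u^\star,\bv_0]$ with $\bv_0=k^\star u^\star$ (and implicitly $k^\star>1$ so that $J_\star\subset\R_{>0}$) together with $f(x)=\mathrm{e}^x-1$, and asserts
$$
\lim_{u^\star\to\infty}\, u^\star\cdot\frac{\vol(P^0(f,J_\star))-\vol(P(f,J_\star))}{\vol(P^0(f,J_\star))}=(d+1)!=2.
$$

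Next I would match the corollary's interval $J=[ku,u]$ with $J_\star$ by solving the two endpoint equations $k^\star u^\star = u$ and $(k^\star-1)u^\star = ku$. Subtracting gives $u^\star=(1-k)u$, whence $k^\star=1/(1-k)$. Since $k\in(0,1)$ we indeed have $k^\star>1$, so the theorem's positivity hypothesis is satisfied; moreover $1-k$ is a fixed positive constant, so $u^\star\to\infty$ if and only if $u\to\infty$. The function $f(x)=\mathrm{e}^x-1$ coincides in one variable with the theorem's $\mathrm{e}^{\mathbf{1}^\top \bx}-1$, so no further adjustment of $f$ is needed.

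Substituting $u^\star=(1-k)u$ into the limit furnished by Theorem \ref{thm:exp_ratio}(a) gives
$$
\lim_{u\to\infty}\,(1-k)u\cdot\frac{\vol(P^0(f,J))-\vol(P(f,J))}{\vol(P^0(f,J))}=2,
$$
and dividing by the positive constant $1-k$ yields the claimed $2/(1-k)$. I expect no genuine obstacle here; the entire content is notational. The only subtlety is to disambiguate the two uses of ``$u$'' and to verify that the reparametrized $k^\star=1/(1-k)$ exceeds $1$ so that the theorem applies; both are immediate from $k\in(0,1)$.
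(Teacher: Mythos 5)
Your proposal is correct and matches the paper's intended derivation: the paper simply asserts that Theorem~\ref{thm:exp_ratio}(a) ``recovers'' this corollary, and your explicit reparametrization $u^\star=(1-k)u$, $k^\star=1/(1-k)$ (with the check that $k^\star>1$ so $J_\star\subset\R_{>0}$) is exactly the bookkeeping needed to make that recovery precise. No gaps.
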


\subsection{The log-sum-exp function and more}
For some convex functions, there are no closed-form formulae for integration over a simplex. In such a case, we can numerically approximate the integration using the cubature formulae presented in Section \ref{subsec:cubature} and compute the asymptotic ratio.
Suppose that we have a cubature formula of degree $q$:
\begin{equation*}
\int_{\Delta_d} f(\bx) d\bx = \sum_{j=1}^M \lambda_j f(\bw_j) + Rf,
\end{equation*}
where $Rf$ is the approximation error, and $Rf=0$ for all polynomials $f$ of degree at most $q$. And we can use $\sum_{j=1}^M \lambda_j f(\bw_j)$ with $M$ summands to approximate the integration $\int_{\Delta_d} f(\bx) d\bx $.
Then, after affine transformation, we can approximate $\int_{J} f(\bx) d\bx$ as follows:
\begin{equation*}
\int_{J} f(\bx) d\bx\approx \sum_{j=1}^M |\det \bB|\lambda_j f(\bB\bw_j+\bv_0),
\end{equation*}
where $\bx\in J ~\Leftrightarrow~ \bB^{-1}(\bx-\bv_0)\in \Delta_d$ as in \eqref{eqn:affine_transformation_B}.
Therefore, we can calculate $\vol(P(f,J))$ (see Theorem \ref{thm:perspecvol})
\[
\vol(\hyperlink{Pfj}{P(f,J)} )\approx\frac{|\det \bB|}{(d+2)!}\sum_{j=0}^d f(\bv_j) - \frac{|\det \bB|}{d+2}\sum_{j=1}^M \lambda_j f(\bB\bw_j+\bv_0).
\]
To compute $\vol(\hyperlink{P0fj}{P^0(f,J)} )$, we need a cubature formula for the region $\{(\bx,z):~\bx\in z\cdot J, 0\le z\le 1\}$.
\begin{theorem}[Theorem 2.8-1 in \cite{stroud1971approximate}]
Suppose that we have a cubature formula of degree $q$ for a region $J$
\begin{equation*}
\int_{J} f(\bx) d\bx = \sum_{j=1}^M \lambda_j f(\bw_j) + R_1f,
\end{equation*}
and a cubature formula of degree $q$
\begin{equation*}
\int_0^1 z^d f(z) dz = \sum_{k=1}^{N} \nu_k f(r_k) + R_2f.
\end{equation*}
Then we have a cubature formula of degree $q$
\begin{equation*}
\iint_{\bx\in z\cdot J, 0\le z\le 1} f(\bx) d\bx dz = \sum_{j=1}^{M}\sum_{k=1}^N \lambda_j\nu_k f(r_k\bw_j) + R_3f.
\end{equation*}
\end{theorem}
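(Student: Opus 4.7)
The plan is to convert the $(d+1)$-dimensional integral over the cone $\{(\bx,z):\bx\in z\cdot J,\,0\le z\le 1\}$ into an iterated integral and then apply the two given cubature rules in succession. First, I would parametrize the cone by the change of variables $\bx=z\tilde{\bx}$ with $\tilde{\bx}\in J$, whose Jacobian determinant is $z^d$. This would rewrite the left-hand side as
\[
\int_{\bx\in z\cdot J,\,0\le z\le 1} f(\bx)\,d\bx\,dz \;=\; \int_0^1 z^d \int_{J} f(z\tilde{\bx})\,d\tilde{\bx}\,dz.
\]

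Next, assuming $f:\R^d\to\R$ is a polynomial of total degree at most $q$, I would observe that for each fixed $z\in[0,1]$ the map $\tilde{\bx}\mapsto f(z\tilde{\bx})$ is a polynomial in $\tilde{\bx}$ of degree at most $q$, so the first given cubature formula would apply exactly to give
\[
\int_{J} f(z\tilde{\bx})\,d\tilde{\bx} \;=\; \sum_{j=1}^{M}\lambda_j\, f(z\bw_j).
\]
Substituting this identity back and interchanging the finite sum with the outer integral would reduce the right-hand side to $\sum_{j=1}^{M}\lambda_j \int_0^1 z^d f(z\bw_j)\,dz$. For each $j$, the function $z\mapsto f(z\bw_j)$ is a univariate polynomial in $z$ of degree at most $q$, so applying the second given cubature formula would evaluate $\int_0^1 z^d f(z\bw_j)\,dz$ exactly as $\sum_{k=1}^{N}\nu_k f(r_k\bw_j)$. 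Collecting the nested sums would deliver $\sum_{j,k}\lambda_j\nu_k\, f(r_k\bw_j)$ with $R_3f=0$, as desired.

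There is no deep obstacle here, because everything reduces to a change of variables followed by two successive invocations of exactness; the only issue worth some care is the degree bookkeeping. Specifically, I would verify that substituting $\bx=z\tilde{\bx}$ into a monomial of total degree $r\le q$ yields $z^r$ times a monomial in $\tilde{\bx}$ of degree $r$, so that (a) the degree in $\tilde{\bx}$ at fixed $z$ remains at most $q$, allowing exact application of the $J$-cubature, and (b) the degree in $z$ at fixed $\tilde{\bx}=\bw_j$ also remains at most $q$, allowing exact application of the univariate $z^d$-weighted rule. Extending from monomials to arbitrary polynomials of total degree at most $q$ is then automatic by linearity of the error functional $R_3$, completing the argument.
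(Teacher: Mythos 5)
Your proof is correct. The paper itself gives no proof of this statement—it is quoted from Stroud's book—but your argument (change of variables $\bx=z\tilde{\bx}$ with Jacobian $z^d$, then exact application of the $J$-rule at each fixed $z$, then exact application of the weighted univariate rule to the degree-$\le q$ polynomial $z\mapsto f(z\bw_j)$) is precisely the standard derivation of the conical product formula, and your degree bookkeeping for monomials $\bx^\alpha\mapsto z^{|\alpha|}\tilde{\bx}^\alpha$ together with linearity of $R_3$ closes the argument. This matches how the paper uses the result immediately afterwards to approximate $\int_0^1 z^d\int_J f(z\bx)\,d\bx\,dz$, so nothing is missing.
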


Therefore, we can calculate $\vol(P^0(f,J))$ (see Theorem \ref{thm:naivevol})
$$
 \vol(\hyperlink{P0fj}{P^0(f,J)} )\approx\frac{|\det \bB|}{(d+2)!}\sum_{j=0}^d f(\bv_j) - |\det \bB| \sum_{j=1}^{M}\sum_{k=1}^N \lambda_j\nu_k f(r_k(\bB \bw_j+\bv_0)).
$$

Next, we test on an example with $d=3$, the log-sum-exp function $f(\bx) := \log \frac{e^{x_1}+e^{x_2}+e^{x_3}}{3}$ (see Example \ref{ex:logsumexp}), and the scaled standard simplex $J:=u\cdot \Delta_d$.
We use the cubature formula of degree 5 in Theroem \ref{Str} for $\Delta_d$ (\url{https://www.mathworks.com/matlabcentral/fileexchange/9435-n-dimensional-simplex-quadrature}) and the Gauss-Jacobi quadrature formula of degree 5 for $\int_0^1 z^d f(z) dz$ (\url{https://www.mathworks.com/matlabcentral/fileexchange/65674-gauss-jacobi-quadrature-rule-n-a-b}) to numerically approximate  the cut-off ratio. We have
%
\begin{align*}
\hyperlink{Pfj}{P(f,J)} &\approx\frac{u^d}{(d+2)!}\sum_{j=1}^d f(u\be_j) - \frac{u^d}{d+2}\sum_{j=1}^M \lambda_j f(u\bw_j),\\
\hyperlink{P0fj}{P^0(f,J)}&\approx\frac{u^d}{(d+2)!}\sum_{j=1}^d f(u\be_j) - u^d \sum_{j=1}^{M}\sum_{k=1}^N \lambda_j\nu_k f(u r_k \bw_j),
\end{align*}
where $M$ and $N$ are the number of summands in the cubature formula of degree 5 in Theroem \ref{Str} for $\Delta_d$ and the Gauss-Jacobi quadrature formula of degree 5 for $\int_0^1 z^d f(z) dz$, respectively.
Figure \ref{fig:numer} shows that the approximated cut-off ratio is 
always small and quickly tends to 
decrease, thus demonstrating that for this family of examples, the 
na\"{i}ve relaxation is quite good. 
\begin{figure}[H]
\centering
\includegraphics[width=.7\textwidth]{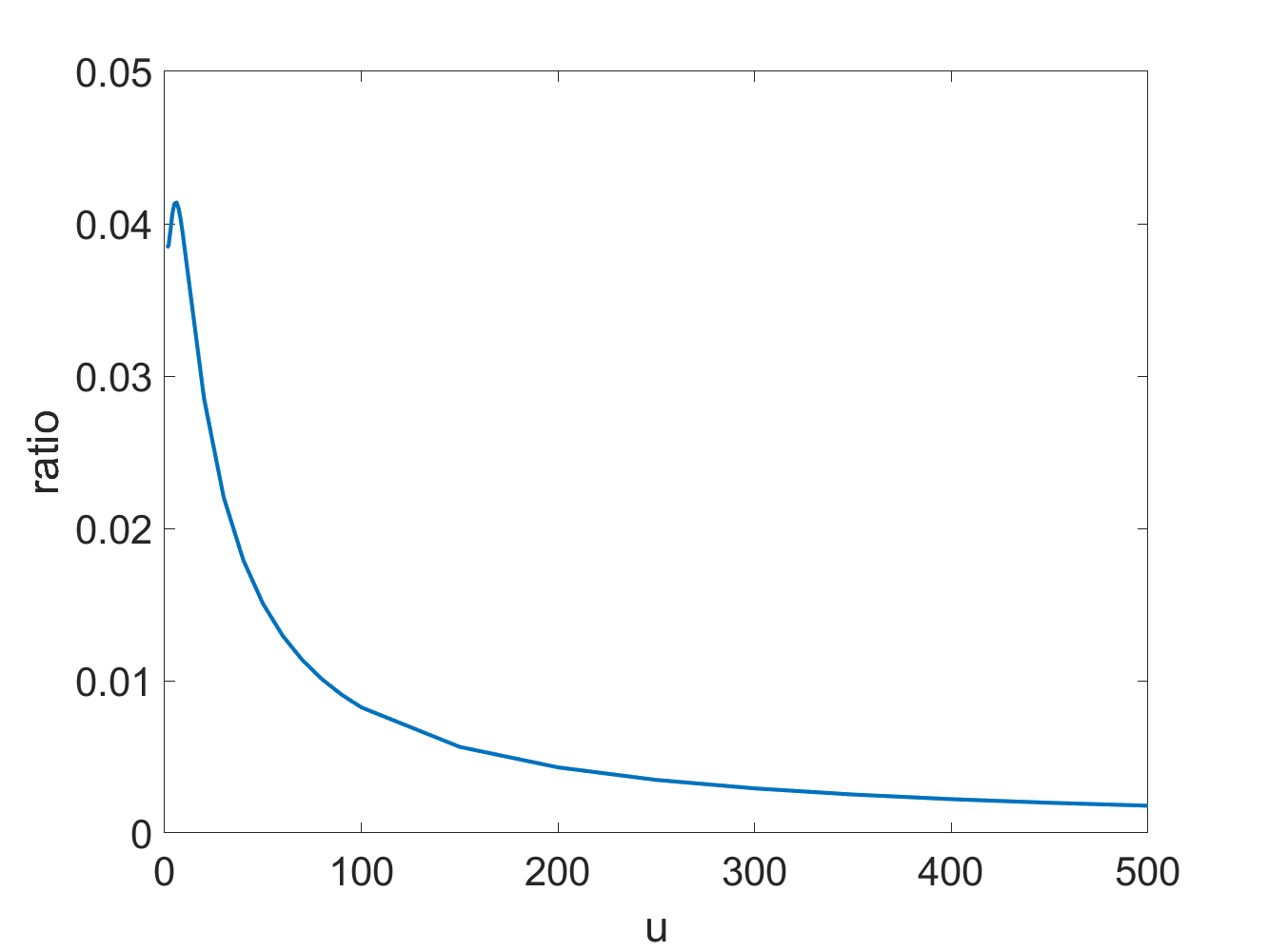}
\caption{The approximated cut-off ratio for a log-sum-exp function $f(\bx) = \log \frac{e^{x_1}+e^{x_2}+e^{x_3}}{3}$ with respect to an expanding simplex $J=u\cdot \Delta_d$}\label{fig:numer}
\end{figure}
In fact, in what follows, we prove this apparent limiting behavior (for arbitrary $d$
and even when the base of the simplex is shifted), and at the same time providing 
some validation of the approximation that we made above using cubature.

\begin{lemma}\label{lem:log_sum_exp}
Suppose that $v_j\in\mathbb{R}^d$, for  $j=1,\dots,d$. Then
$$
\lim_{u\rightarrow \infty}\textstyle \frac{1}{u}\int_{\Delta_d} \log \left(\frac{1}{d}\sum_{j=1}^d e^{u x_j+v_j}\right) d\bx= \int_{\Delta_d}\max(\bx) d\bx=\frac{1}{(d+1)!}\sum_{j=1}^d\frac{1}{j}.
$$
\end{lemma}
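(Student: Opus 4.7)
The plan has two independent ingredients. First, I interchange the limit and the integral on the left-hand side using the classical log-sum-exp sandwich $\max_j y_j \leq \log\sum_{j=1}^d \mathrm{e}^{y_j} \leq \max_j y_j + \log d$. Applying this with $y_j := u x_j + v_j$, subtracting $\log d$, and dividing by $u$, I obtain
\[
\max_j\!\left(x_j + \tfrac{v_j}{u}\right) - \tfrac{\log d}{u} \;\leq\; \tfrac{1}{u}\log\tfrac{1}{d}\sum_{j=1}^d \mathrm{e}^{u x_j+v_j} \;\leq\; \max_j\!\left(x_j + \tfrac{v_j}{u}\right).
\]
Since $|\max_j(x_j+v_j/u) - \max(\bx)| \leq (\max_j|v_j|)/u$ for every $\bx \in \Delta_d$, the middle expression converges uniformly on $\Delta_d$ to $\max(\bx)$ as $u\to\infty$; the finite measure of $\Delta_d$ then allows me to pass the limit through the integral, establishing the first equality of the lemma.

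Second, I would compute $\int_{\Delta_d}\max(\bx)\,d\bx$ in closed form by a layer-cake argument. Since $0 \le \max(\bx) \le 1$ on $\Delta_d$,
\[
\int_{\Delta_d}\max(\bx)\,d\bx = \int_0^1 \!\left(\tfrac{1}{d!} - V(t)\right) dt, \quad V(t) := \vol\{\bx \in [0,t]^d : \mathbf{1}^\top \bx \leq 1\}.
\]
An Irwin--Hall-style inclusion--exclusion on the $d$ constraints $x_j \leq t$ (together with the simplex constraint $\mathbf{1}^\top \bx \leq 1$) yields
\[
V(t) = \tfrac{1}{d!}\sum_{k=0}^{\lfloor 1/t\rfloor}(-1)^k \binom{d}{k}(1-kt)^d \qquad \text{for } t\in(0,1].
\]
Integrating term-by-term (the $k$-th summand is supported on $t\in[0,1/k]$ and integrates to $1/[k(d+1)]$) collapses the expression to
\[
\int_{\Delta_d}\max(\bx)\,d\bx = \frac{1}{(d+1)!}\sum_{k=1}^d (-1)^{k-1}\binom{d}{k}\frac{1}{k}.
\]
The classical identity $\sum_{k=1}^d(-1)^{k-1}\binom{d}{k}\frac{1}{k} = \sum_{j=1}^d\frac{1}{j}$---obtained by evaluating $\int_0^1\frac{1-(1-x)^d}{x}\,dx$ both by binomial expansion and by the substitution $y=1-x$ followed by the geometric series---closes the proof.

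The main obstacle is the book-keeping in the Irwin--Hall formula and the term-by-term integration, where the support interval of the $k$-th summand depends on $k$ and must be tracked with care. An alternative route that avoids the inclusion--exclusion is induction on $d$, integrating out $x_d$ and splitting the integrand according to whether $x_d$ exceeds $\max(x_1,\dots,x_{d-1})$; this introduces its own case analysis but still ultimately reduces to the harmonic recursion $H_d = H_{d-1} + 1/d$.
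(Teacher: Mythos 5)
Your proof is correct. The first equality is handled essentially as in the paper: both arguments sandwich $\frac{1}{u}\log\frac1d\sum_j \mathrm{e}^{ux_j+v_j}$ between $\max(\bx)$ plus error terms of order $O(1/u)$ that are uniform over the compact simplex, and pass the limit through the integral. (The hypothesis ``$v_j\in\mathbb{R}^d$'' in the statement is evidently a typo for $v_j\in\mathbb{R}$; you treat the $v_j$ as scalars, which is the intended reading.) For the second equality, however, you take a genuinely different and fully self-contained route. The paper invokes Lasserre's Laplace-transform identity to rewrite $\int_{\Delta_d}\max(\bx)\,d\bx$ as $\frac{1}{(d+1)!}\,\mathbb{E}\bigl(\max(X_1,\dots,X_d)\bigr)$ for i.i.d.\ unit-mean exponentials and then cites R\'enyi's classical formula $\mathbb{E}\max = \sum_{j=1}^d \frac1j$; this is short but leans on two external results. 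You instead compute the integral directly by the layer-cake formula together with an Irwin--Hall-style inclusion--exclusion for $V(t)=\vol\{\bx\in[0,t]^d:\mathbf{1}^\top\bx\le 1\}$, and your bookkeeping checks out: the $k=0$ term of $V(t)$ cancels the $\frac{1}{d!}$, each $k\ge 1$ term is supported on $[0,1/k]$ and contributes $\frac{1}{k(d+1)}$ (the cap at $k\le d$ is automatic since $\binom{d}{k}=0$ beyond that), and the alternating identity $\sum_{k=1}^d(-1)^{k-1}\binom{d}{k}\frac1k=\sum_{j=1}^d\frac1j$ finishes the computation. Your version is more elementary and verifiable from first principles; the paper's version is shorter and ties into the simplex-integration machinery it surveys elsewhere (and the probabilistic identity generalizes more readily to other order statistics).
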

\begin{proof}
Notice that
$$
\textstyle
\log \left(\frac1d e^{u \max(\bx)+\min(\bv)}\right)\le \log \left(\frac{1}{d}\sum_{j=1}^d e^{u x_j+v_j}\right) \le \log \left(\frac1d e^{u \max(\bx)}\sum_{j=1}^d e^{v_j}\right).
$$
We have 
$$
\textstyle
\max(\bx) + \frac{\min(\bv)-\log d}{u}\le \frac{1}{u}\log \left(\frac{1}{d}\sum_{j=1}^d e^{u x_j+v_j}\right)\le \max(\bx)+\frac{\log \left(\sum_{j=1}^d e^{v_j}\right)-\log d}{u}.
$$ 
Therefore, 
$\lim_{u\rightarrow \infty}\frac{1}{u}\int_{\Delta_d} \log \left(\frac{1}{d}\sum_{j=1}^d e^{u x_j+v_j}\right) d\bx= \int_{\Delta_d}\max(\bx) d\bx.$
By \cite[Thm. 2.1]{lasserre_simple_2020}, we have
$$
\textstyle
\int_{\Delta_d} \max(\bx) d\bx = \frac{1}{(d+1)!}\int_{\mathbb{R}_{\ge 0}} \max(\bx) e^{-\mathbf{1}^\top \bx} d\bx =\frac{1}{(d+1)!} \mathbb{E}(\max(X_1,\dots,X_d)),
$$
where $X_1,\dots,X_d$ are i.i.d. exponential random variables with mean 1, and $\mathbb{E}(\cdot)$ denotes the expectation.
By \cite[Eq. 1.9]{renyi1953theory}, the lemma follows.
\qed\end{proof}
\begin{theorem}
Let $J :=  \bv_0+ u\Delta_d$,  where $\mathbf{0}\neq \bv_0\in\R^d_{\ge0}$~, and $f(\bx) := \log \left(\frac{1}{d}\sum_{j=1}^d e^{x_j}\right)$. 
Then,
$$
    \lim_{u\rightarrow \infty}\textstyle
\frac{\vol(\hyperlink{P0fj}{P^0(f,J)} ) -  \vol(\hyperlink{Pfj}{P(f,J)} )}{ \vol(\hyperlink{P0fj}{P^0(f,J)} )}
    = 0.
$$
\end{theorem}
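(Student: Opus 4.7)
The strategy is to show that the leading-order $u^{d+1}$ asymptotic contributions to $\vol(P(f,J))$ and $\vol(P^0(f,J))$ are identical, so that their difference is of strictly lower order than the volumes themselves, forcing the cut-off ratio to vanish. Combining Theorems \ref{thm:perspecvol} and \ref{thm:naivevol} one obtains
$$
\vol(P^0(f,J)) - \vol(P(f,J)) = \tfrac{1}{d+2}\int_J f(\bx)\, d\bx \;-\; \int_0^1 z^d\int_J f(z\bx)\, d\bx\, dz.
$$
Both remaining integrals are of log-sum-exp type, so I would reduce them to integrals over $\Delta_d$ via the substitution $\bx = \bv_0 + u\by$, giving integrands $\log(\tfrac{1}{d}\sum_j \mathrm{e}^{z v_{0,j} + z u y_j})$ that fit exactly the setting of Lemma \ref{lem:log_sum_exp} (with $u$ replaced by $zu$ and $\bv$ replaced by $z\bv_0$).

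The real work is to extract an $O(u^d)$ error (not just $o(u^{d+1})$) from Lemma \ref{lem:log_sum_exp}. For this I would revisit the sandwich bound from its proof,
$$
zu\max(\by) + z\min(\bv_0) - \log d \;\leq\; \log\!\Bigl(\tfrac{1}{d}\textstyle\sum_j \mathrm{e}^{zv_{0,j} + zuy_j}\Bigr) \;\leq\; zu\max(\by) + \log\!\Bigl(\textstyle\sum_j \mathrm{e}^{zv_{0,j}}\Bigr) - \log d.
$$
Since $\bv_0 \in \R^d_{\geq 0}$, both endpoints differ from $zu\max(\by)$ by a quantity uniformly bounded in $z \in [0,1]$ and independent of $u$. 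Setting $H_d := \sum_{j=1}^d 1/j$ and using $\int_{\Delta_d}\max(\by)\,d\by = H_d/(d+1)!$ from the proof of Lemma \ref{lem:log_sum_exp}, I get
$$
\int_J f(\bx)\,d\bx = \tfrac{u^{d+1}H_d}{(d+1)!} + O(u^d), \qquad \int_0^1 z^d\int_J f(z\bx)\,d\bx\, dz = \tfrac{u^{d+1}H_d}{(d+2)!} + O(u^d).
$$
The factor $\tfrac{1}{d+2}$ in front of the first integral is precisely what is needed for the two $u^{d+1}$ leading terms to cancel, yielding $\vol(P^0(f,J)) - \vol(P(f,J)) = O(u^d)$.

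For the denominator I would use $f(\bv_0+u\be_k) = u + v_{0,k} - \log d + O(\mathrm{e}^{-u})$ for each $k = 1,\dots,d$, giving $\sum_{j=0}^d f(\bv_j) = du + O(1)$, so Theorem \ref{thm:naivevol} yields $\vol(P^0(f,J)) = \tfrac{u^{d+1}(d - H_d)}{(d+2)!} + O(u^d)$. For $d \geq 2$, since $H_d \leq 1 + \ln d < d$, the leading coefficient is strictly positive, so the cut-off ratio is $O(1/u) \to 0$; the case $d=1$ is trivial because $f(x)=x$ is linear and both volumes vanish identically. The main obstacle is the careful uniformity-in-$z$ control of the $O(1)$ remainder, which relies critically on $\bv_0 \geq \mathbf{0}$ to bound $\min(z\bv_0) \geq 0$ and $\log\sum_j \mathrm{e}^{zv_{0,j}} \leq \max(\bv_0) + \log d$ uniformly in $z \in [0,1]$; without this uniformity, Lemma \ref{lem:log_sum_exp} would only supply an $o(u^{d+1})$ estimate, which is too weak to expose the exact cancellation of the leading terms.
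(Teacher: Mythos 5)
Your proof is correct and follows essentially the same route as the paper: the same difference formula obtained from Theorems \ref{thm:perspecvol} and \ref{thm:naivevol}, the same key Lemma \ref{lem:log_sum_exp}, cancellation of the $u^{d+1}$ leading terms, and strict positivity of the leading coefficient $\frac{d-H_d}{(d+2)!}$ of $\vol(P^0(f,J))$ where $H_d=\sum_{j=1}^d 1/j$. The only differences are minor: your uniform-in-$z$ $O(1)$ error control yields the sharper rate $O(1/u)$, whereas the paper gets by with the weaker $o(u^{d+1})$ estimate for the numerator (which already suffices since the denominator is $\Theta(u^{d+1})$, so your claim that this would be ``too weak'' is not quite right), and you correctly flag the degenerate case $d=1$ (where $H_1=d$ and both volumes vanish), which the paper's assertion of strict positivity silently overlooks.
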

\begin{proof}
By Lemma \ref{lem:log_sum_exp}, $\lim\limits_{u\rightarrow \infty}\frac{1}{u}\int_{\Delta_d} f(u\bx+\bv_0)d\bx = C_d$, where $C_d=\frac{\sum_{j=1}^d\frac{1}{j}}{(d+1)!}$. By Theorem \ref{thm:perspecvol} and \ref{thm:naivevol}, we have
\begin{align*}
    &\textstyle
    \frac{ \vol(\hyperlink{P0fj}{P^0(f,J)} ) -  \vol(\hyperlink{Pfj}{P(f,J)} )}{d!\vol(J)} 
  = \textstyle\frac{1}{d+2}\int_{\Delta_d} f(u\bx+\bv_0) d\bx-  \int_0^1 z^d\int_{\Delta_d} f(z(u\bx+\bv_0)) d\bx dz.
\end{align*}
Thus,
\begin{align*}
    \lim_{u\rightarrow\infty}\textstyle \frac{1}{u}\frac{ \vol(\hyperlink{P0fj}{P^0(f,J)} ) -  \vol(\hyperlink{Pfj}{P(f,J)} )}{d!\vol(J)} &=\textstyle\frac{1}{d+2}C_d - \int_0^1z^{d+1} \lim\limits_{u\rightarrow \infty}\frac{1}{uz}\int_{\Delta_d} f(z(u\bx+\bv_0))d\bx dz \\
    &= \frac{1}{d+2}C_d-  \int_0^1 z^{d+1}C_d dz=0. 
\end{align*}
We can further compute
\begin{align*}    
    \textstyle\frac{ \vol(\hyperlink{P0fj}{P^0(f,J)} ) }{d!\vol(J)} &\textstyle= \frac{1}{(d+2)!}\left(f(\bv_0)+\sum_{j=1}^d f(u\be_j+\bv_0)\right)\\
    &\textstyle\qquad - \int_0^1 z^d\int_{\Delta_d} f(z(u\bx+\bv_0)) d\bx dz.
\end{align*}
Thus,
\begin{align*}  
    \lim_{u\rightarrow\infty} \textstyle \frac{1}{u}&\textstyle\frac{ \vol(\hyperlink{P0fj}{P^0(f,J)} ) }{d!\vol(J)}= \frac{d}{(d+2)!} - \frac{1}{d+2}C_d=\frac{1}{(d+2)!}\left(d-\sum_{j=1}^d\frac{1}{j}\right)>0.
\end{align*}
Therefore, the result follows.
\qed\end{proof}

\section{Conclusions}\label{sec:conc}
We investigated the idea of using volume as a measure to compare the perspective relaxation and na\"{i}ve relaxation in the multivariate case, for a natural disjunctive model that received a lot of attention in
the univariate case. Focusing on the natural and  fundamental building-block case where the domain is a simplex, we extended some results for the univariate case.

\begin{itemize}
\item We provided a theorem to compute the volumes of the perspective relaxation and na\"{i}ve relaxation for general functions and connect the calculation to the integration over a simplex. We 
made an extensive survey of the relevant results on integration over a simplex,
working out the connections and some extensions (which might
additionally be of independent interest for the optimization community). 
\item We analyzed the cut-off ratio for several important classes of functions,  generalizing
results from the univariate case.
Specifically, the cut-off ratio has a positive lower bound for powers of linear functions under some conditions, which implies that the difference between the two relaxations is substantial.
On the other side, the cut-off ratio is small and tends to 0 for a class of exponential functions and 
the log-sum-exp function over a scaled standard simplex, which implies that the perspective  and na\"{i}ve relaxations  are close.
\item When the closed formula is not available, we provided an idea on how to use cubature formulae to numerically compute and compare the volumes.
%
%
\end{itemize}

For  future directions, we believe that some technical improvements can be achieved, for example, the lower bound in Theorem \ref{thm:ratio2}, as well as understanding the asymptotic behavior of the cut-off ratio in terms of more general classes of 
functions and domains.
A further interesting direction is to generalize and compare other relaxations for the multivariate setting, such as extending the original function, and perspective relaxation of the piecewise-linear under-estimators
(see \cite{lee2020piecewise}). However, we probably need stronger assumptions on the functions and simplex to handle other relaxation in the multivariate setting.

Finally, we briefly discuss a general setting when the decision variable (vector) $\bx$
is either $\mathbf{0}\in\mathbb{R}^d$ or in a polytope $P\subset \mathbb{R}_{\geq 0}^d$ (not containing $\mathbf{0}$), and we have a triangulation of the convex polytope $P$.
We are considering convex relaxations of the ``disjunctive set''
\begin{align*}
    D(f,\mathcal{J})&:= \left\{\mathbf{0}_{d+1+|\mathcal{N}|}\right\}\cup\\
    &\bigcup_{n\in\mathcal{N}}\left\{(\bx,y,\bz)\in\R^d \times \R\times \{0,1\}^{|\mathcal{N}|}~:~y = f(\bx),~ \bx\in J_{n},~ \bz=\be_n^{|\mathcal{N}|}\right\},
\end{align*}
where $\mathcal{J}=\{J_n:~n\in\mathcal{N}\}$ is a triangulation of the convex polytope domain in $\R^d$, and $f$ is convex on $J_n$, for $n\in\mathcal{N}$. We assume that the polytope domain is a subset of $\mathbb{R}_{\geq 0}^d\setminus \{\mathbf{0}\}$.
The binary $|\mathcal{N}|$-vector $\bz$ is either $\mathbf{0}$,
if $(\bx,y)=(\mathbf{0},0)$ or $\bz$ is the $n$-th standard unit vector, if $\bx\in J_n$ for some $n\in\mathcal{N}$.
The special case with $|\mathcal{N}|=1$ is what we analyzed in this work. 
In applications of the general case, $D(f,\mathcal{J})$ would be a substructure of a larger model, where 
the cost of $\bx\in J_n$ is $f(\bx) + c_n$~, and is modeled by $y+\sum_{n\in\mathcal{N}} c_n z_n$~.

%
Let $\mu_n(\bx)$ be a linear function that bounds $y$ from above on $J_n$, $n\in\mathcal{N}$. 
%
By introducing $\bx_n$ and $y_n$ for each simplex $J_n$, $n\in\mathcal{N}$, we obtain the \emph{extended perspective relaxation}
%
\begin{align*}
P(f,\mathcal{J}):=&\mathrm{cl}\left\{ (\{\bx_n:n\in\mathcal{N}\},\by,\bz) \in \R^{d|\mathcal{N}|}\times \R^{|\mathcal{N}|} \times (0,1]^{|\mathcal{N}|} ~:~ \right.\\
&\left.\tilde{\mu}_n(\bx_n,z_n) \geq y_n \geq \tilde{f}(\bx_n,z_n),~
 \mathbf{1}^\top \bz\le 1, \bx_j\in z_n\cdot J_n~,~ n\in \mathcal{N}
\vphantom{\R^{|\mathcal{N}|}}\right\},
\end{align*}
where $\bx=\sum_{n\in\mathcal{N}}\bx_n$~, and $y=\sum_{n\in\mathcal{N}}y_n$~.
It is only the constraint $\mathbf{1}^\top \bz\le 1$ that prevents the extended perspective relaxation from factoring across the set of simplices $\mathcal{J}$, and hopefully we can still use the analysis
for each subproblem on a single simplex. We see going deeper into analyzing $P(f,\mathcal{J})$ as a starting point for some important further work on our topic.

%


\begin{acknowledgements}
We gratefully acknowledge discussions with Mathias K\"oppe in regard to \cite{baldoni_how_2010}.
\end{acknowledgements}

%
\section*{Conflict of interest}
The authors declare that they have no conflict of interest.

\bibliographystyle{alpha}
\bibliography{perssimplex}   


\end{document}